\documentclass[11pt]{amsart}
\usepackage{amsfonts}
\usepackage{}

\usepackage{amsmath}
\usepackage{amssymb}
\usepackage{latexsym}
\usepackage{amscd}
\usepackage{mathrsfs}
\usepackage[all]{xy}

\newdimen\AAdi%
\newbox\AAbo%
%
\def\AAk#1#2{\s_etbox\AAbo=\hbox{#2}\AAdi=\wd\AAbo\kern#1\AAdi{}}%
\def\AAr#1#2#3{\s_etbox\AAbo=\hbox{#2}\AAdi=\ht\AAbo\raise#1\AAdi\hbox{#3}}%
\font\tenmsb=msbm10 at 12pt \font\sevenmsb=msbm7 at 8pt
\font\fivemsb=msbm5 at 6pt
\newfam\msbfam
\textfont\msbfam=\tenmsb \scriptfont\msbfam=\sevenmsb
\scriptscriptfont\msbfam=\fivemsb

\textwidth 15.0cm \textheight 22cm \topmargin 0cm \oddsidemargin
0.5cm \evensidemargin 0.5cm
\parindent = 5 mm
\hfuzz     = 6 pt
\parskip   = 3 mm

\newtheorem{theorem}{Theorem}

\newtheorem{remark}[theorem]{Remark}

\newtheorem{corollary}[theorem]{Corollary}

\newtheorem{lemma}[theorem]{Lemma}
\newtheorem{proposition}[theorem]{Proposition}

\numberwithin{equation}{section} \numberwithin{theorem}{section}

\setlength{\oddsidemargin}{0cm}
\renewcommand{\topmargin}{0cm}
\renewcommand{\oddsidemargin}{5mm}
\renewcommand{\evensidemargin}{5mm}
\renewcommand{\textwidth}{150mm}
\renewcommand{\textheight}{230mm}

\def\R{\mathbb R}

\def\Z{\mathbb Z}

\def\Z{\mathbb Z}

\def\na{\nabla}
\def\bn{\overline\nabla}

\def\f#1#2{\frac{#1}{#2}}

\def\a{\alpha}
\def\be{\beta}

\def\r{\Re_{I\!V}}

\def\p#1{\partial #1}

\def\de{\delta}
\def\De{\Delta}
\def\e{\eta}
\def\ep{\epsilon}

\def\G{\Gamma}

\def\k{\kappa}
\def\la{\lambda}
\def\La{\Lambda}
\def\lan{\langle}
\def\ran{\rangle}

\def\Om{\Omega}
\def\th{\theta}
\def\Th{\Theta}

\def\Si{\Sigma}

\def\r{\rho}

\begin{document}

\title
[Liouville type theorems and Hessian estimates]
{Liouville type theorems and Hessian estimates for special Lagrangian equations}
\author{Qi Ding}
\address{Shanghai Center for Mathematical Sciences, Fudan University, Shanghai 200438, China}
\email{dingqi@fudan.edu.cn}

\thanks{
The author would like to express his sincere gratitude to the referees for valuable comments that will help me to improve the quality of the manuscript.
The author is partially supported by NSFC 11871156 and NSFC 11922106. }

\begin{abstract}

In this paper, we get a Liouville type theorem for the special
Lagrangian equation with a certain 'convexity' condition, where Warren-Yuan first studied the condition in \cite{WmY2}.
Based on Warren-Yuan's work, our strategy is to show a global Hessian estimate of solutions via the Neumann-Poincar$\mathrm{\acute{e}}$ inequality on special Lagrangian graphs, and mean value inequality for superharmonic functions on these graphs, where we need geometric measure theory.
Moreover, we derive interior Hessian estimates on the gradient of the solutions to the equation with this 'convexity' condition or with supercritical phase.

\end{abstract}

\maketitle

\section{Introduction}

Let $u$ be a smooth function on an open set $\Om\subset\R^n$, then $M\triangleq\{(x,Du(x))\in\R^n\times\R^n|\, x\in\Om\}$ is a Lagrangian submanifold in $\R^n\times\R^n$.
Let $\la_1(x),\cdots,\la_n(x)$ be the eigenvalues of the Hessian matrix $D^2u(x)$ at any point $x\in\Om$.
We call $M$ a \emph{special Lagrangian graph} if $u$ is a solution to the special Lagrangian equation
\begin{equation}\aligned\label{SL}
\sum_{i=1}^n\arctan\la_i=\Th,\qquad\qquad \mathrm{for\ some\ constant\ } \Th.
\endaligned
\end{equation}
The equation \eqref{SL} arises in the special Lagrangian geometry by Harvey-Lawson \cite{HL}.
$M$ is the special Lagrangian graph if and only if $M$ is a minimal submanifold in $\R^n\times\R^n$, or the calibrating $n$-form
$Re(e^{-\sqrt{-1}\Th}dz_1\wedge\cdots\wedge dz_n)$ is equal to the induced volume form along $M$, which is also equivalent to that $M$ is (volume)
minimizing in $\R^n\times\R^n$ (see Theorem 2.3, Proposition 2.17 in  \cite{HL}; or Chapter 5 in \cite{X}).

The classification of global solutions to \eqref{SL} on $\R^n$ has a long history.
In 1998, Fu \cite{F} classified any smooth solution to \eqref{SL} on $\R^2$, i.e.,
any such solution is either quadratic for $|\Th|>0$ or harmonic for $\Th=0$. In particular, \eqref{SL} for $\Th=\f\pi2$ is just the Monge-Amp$\mathrm{\grave{e}}$re equation of dimension 2. Let $u$ be a smooth solution to \eqref{SL} on $\R^n$.
In high dimensions, Yuan \cite{Y2} proved that $u$ must be quadratic for $|\Th|>\f{n-2}2\pi$.
For $\Th=k\pi$ with integer $k$, Borisenko \cite{Bo} proved that $u$ is affine provided $u$ has the linear growth.
For general $n$, Jost-Xin \cite{JX} showed that the convex solution $u$ is quadratic provided the Hessian $D^2u$ is uniformly bounded.
For $n=3$ and $\Th=\pi$, Bao-Chen-Guan-Ji \cite{BCGJ} proved that the strictly convex $u$ with quadratic growth must be quadratic.
Using Lewy rotation brilliantly, Yuan \cite{Y1} proved that the convex solution $u$ must be quadratic for each $n$.

Furthermore, the Liouville theorem may hold true under conditions much weaker than convexity.
Let $u$ be a smooth solution to \eqref{SL} on $\R^n$ with the eigenvalues $\la_1,\cdots,\la_n$ of the Hessian matrix $D^2u$.
In the same paper \cite{Y1}, Yuan proved the existence of the constant $\ep'>0$ depending only on $n$ such that $u$ is quadratic provided $D^2u\ge-\ep'$ on $\R^n$. Further, for $n=3$ Yuan proved that $u$ is quadratic on $\R^3$ if $D^2u$ is uniformly bounded from below \cite{Y1}, or $\la_i\la_j$ is uniformly bounded from below for all $i,j$ \cite{Y3}.
Moreover, Tsui-Wang \cite{TW} proved that if $\la_i\la_j\ge-\f32+\tau$ for all $i,j$ and any fixed constant $\tau>0$, and $|D^2u|$ is uniformly bounded, then $u$ is quadratic.
In \cite{Y3}, Yuan proved that $u$ is quadratic if one of following statement holds: (i) $\la_i\ge-\f1{\sqrt{3}}+\de$ everywhere for every $i,j$ and any fixed constant $\de>0$ (or 'equivalently' $|\la_i|\le\sqrt{3}-\de'$ for every $i$ and any fixed constant $\de'>0$); (ii) $\la_i\la_j\ge-1-\de''$ everywhere for every $i,j$ and any fixed constant $\de''>0$.
In \cite{WmY2}, Warren-Yuan first introduced a more general 'convexity' condition:
\begin{equation}\aligned\label{3eplailaj}
3+(1-\ep)\la_i^2(x)+2\la_i(x)\la_j(x)\ge0
\endaligned
\end{equation}
for all $i,j,x$ and any small fixed $\ep>0$,
which appeared naturally in studying subharmonicity of $\log\det(I+D^2uD^2u)$ on the special Lagrangian graph of the graphic function $Du$.
Under the condition \eqref{3eplailaj} and $|Du|<\de(n)|x|$ for large $|x|$ and any fixed constant $\de(n)<\f1{\sqrt{n-1}}$, Warren-Yuan showed that $u$ is quadratic \cite{WmY2}.
Moreover, they also proved that $u$ is quadratic provided \eqref{3eplailaj} holds for $\ep=0$ and $ D^2u$ is uniformly bounded on $\R^n$.

In this paper, we show a Liouville type theorem for special Lagrangian graphs under the condition \eqref{3eplailaj} for $\ep=0$.
\begin{theorem}\label{MAIN}
Let $u$ be a smooth solution to the special Lagrangian equation \eqref{SL} on $\R^n$,
where $\la_1(x),\cdots,\la_n(x)$ are the eigenvalues of the Hessian $D^2u(x)$. If
\begin{equation}\aligned\label{3lailaj}
3+\la_i^2(x)+2\la_i(x)\la_j(x)\ge0
\endaligned
\end{equation}
holds
for all $i,j=1,\cdots,n$ and $x\in\R^n$, then $u$ must be a quadratic polynomial.
\end{theorem}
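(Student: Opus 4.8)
\noindent\textit{Proof strategy.} The plan is to prove a global Hessian estimate — a uniform bound $\sup_{\R^n}|D^2u|<\infty$ — and then invoke the theorem of Warren--Yuan quoted above, namely that condition \eqref{3eplailaj} with $\ep=0$ (which is precisely \eqref{3lailaj}) together with a uniform bound on $D^2u$ forces $u$ to be a quadratic polynomial. Throughout, work on the special Lagrangian graph $M=\{(x,Du(x)):x\in\R^n\}\subset\R^n\times\R^n$, which by Harvey--Lawson is area-minimizing (calibrated), in particular a minimal $n$-submanifold. Write $g=I+(D^2u)^2$ for the induced metric in the $x$-coordinates, $V=\sqrt{\det g}=\prod_i(1+\la_i^2)^{1/2}$ for the volume density of the graph over $\R^n$, and $v=\ln V\ge0$. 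Since $e^{v}\ge(1+\la_i^2)^{1/2}$ for every $i$, an upper bound on $v$ bounds $|D^2u|$, so it suffices to bound $v$ from above uniformly on $M$. Record also that the projection $\pi\colon M\to\R^n$, $(x,Du(x))\mapsto x$, has Jacobian $V^{-1}=e^{-v}$, hence $e^{-v}\,dA_M=dx$, and for extrinsic balls $\mathbb B_R=\mathbb B_R(p)\subset\R^n\times\R^n$ one has $\int_{\mathbb B_R\cap M}e^{-v}\,dA_M=|\pi(\mathbb B_R\cap M)|$, the Euclidean measure of the ``shadow'' of $\mathbb B_R\cap M$.

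Two ingredients feed the estimate. On the PDE side: following Warren--Yuan's computation of $\De_M\ln\det(I+(D^2u)^2)$ along the special Lagrangian graph, the borderline condition \eqref{3lailaj} yields (in the case $\ep=0$) the differential inequality $\De_M v\ge|\na_M v|^2$ on $M$; equivalently, $e^{-v}\in(0,1]$ is a positive \emph{superharmonic} function on $M$, while $V=e^{v}$ is subharmonic. On the geometric side, treating $M$ as an area-minimizing integral current one has from geometric measure theory: the monotonicity formula, which makes $\rho\mapsto\mathrm{Vol}(\mathbb B_\rho(p)\cap M)/\rho^n$ nondecreasing with limit $\om_n$ as $\rho\to0$, hence volume doubling on $M$; the Michael--Simon--Allard Sobolev inequality on $M$ with constant depending only on $n$; and, combining these, a Neumann--Poincar\'e inequality — writing $\bar f_R$ for the average of $f$ over $\mathbb B_R\cap M$ — of the form $\mathrm{Vol}(\mathbb B_R\cap M)^{-1}\int_{\mathbb B_R\cap M}|f-\bar f_R|^2\le C(n)R^2\,\mathrm{Vol}(\mathbb B_{2R}\cap M)^{-1}\int_{\mathbb B_{2R}\cap M}|\na_M f|^2$ with $C(n)$ independent of $u$. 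That all these constants may be taken to depend only on $n$, despite $M$ not being globally embedded, is exactly where area-minimization — not merely minimality — is used.

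Now run the argument. A Caccioppoli estimate from $\De_M v\ge0$ (multiply by a cutoff $\eta^2$, integrate by parts on $M$) gives $\int_{\mathbb B_R\cap M}|\na_M v|^2\le C(n)R^{-2}\mathrm{Vol}(\mathbb B_{2R}\cap M)$, hence by doubling the uniform decay $\mathrm{Vol}(\mathbb B_R\cap M)^{-1}\int_{\mathbb B_R\cap M}|\na_M v|^2\le C(n)R^{-2}$. Feeding this into the Neumann--Poincar\'e inequality gives the scale-independent bound $\mathrm{Vol}(\mathbb B_R\cap M)^{-1}\int_{\mathbb B_R\cap M}|v-\bar v_R|^2\le C(n)$: the function $v$ never drifts far, in $L^2$-mean, from its ball-average $\bar v_R$; together with the sub-mean-value inequality for the subharmonic $v$ on the minimal submanifold $M$ (Michael--Simon), $v$ and $\bar v_R$ stay within an additive $C(n)$ of one another at every scale. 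Finally, the De Giorgi--Nash--Moser weak Harnack inequality on $M$ (available from doubling, Poincar\'e and Sobolev) applied to the positive superharmonic function $e^{-v}$ yields $\inf_{\mathbb B_{R/2}\cap M}e^{-v}\ge c(n)\,\mathrm{Vol}(\mathbb B_R\cap M)^{-1}\int_{\mathbb B_R\cap M}e^{-v}\,dA_M=c(n)\,|\pi(\mathbb B_R\cap M)|\big/\mathrm{Vol}(\mathbb B_R\cap M)$. Hence $v\le C(n)$ on all of $M$ — and then Warren--Yuan's theorem completes the proof — as soon as one shows $\liminf_{R\to\infty}|\pi(\mathbb B_R\cap M)|/\mathrm{Vol}(\mathbb B_R\cap M)>0$, that is, that the shadow of $M$ retains a definite fraction of its area at large scales.

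I expect this last volume comparison to be the main obstacle. Since there is no a priori gradient bound, one must rule out that $M$ is, over a large portion of $\R^n$, so steep that its area dwarfs the area of its shadow; equivalently, one must show $\mathrm{Vol}(\mathbb B_R\cap M)\le C(n)|\pi(\mathbb B_R\cap M)|$ for all large $R$ — i.e.\ that the average of $V$ over the shadow is bounded. The available tools are: the monotonicity formula for the area-minimizing current $M$, controlling $\mathrm{Vol}(\mathbb B_R\cap M)/R^n$; the identity $\mathrm{Vol}(\mathbb B_R\cap M)-|\pi(\mathbb B_R\cap M)|=\int_{\mathbb B_R\cap M}(1-e^{-v})\,dA_M\le\int_{\mathbb B_R\cap M}v\,dA_M$, which recasts the comparison as a bound on $\int_{\mathbb B_R\cap M}v$; and the concentration estimate $\mathrm{Vol}(\mathbb B_R\cap M)^{-1}\int_{\mathbb B_R\cap M}|v-\bar v_R|^2\le C(n)$, which must be leveraged against the GMT volume growth to bound $\bar v_R$ uniformly in $R$. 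Making this interplay between the PDE quantity $v$ and the monotonicity of volume rigorous — along with setting up the Neumann--Poincar\'e and weak Harnack inequalities on the possibly-irregular current $M$ with purely dimensional constants — is the technical heart of the argument.
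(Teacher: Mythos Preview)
Your overall plan---bound $D^2u$ globally, then quote Warren--Yuan---is the same as the paper's. But the way you try to get the global Hessian bound has a genuine gap, and it is exactly the one you flag as ``the main obstacle'' without resolving.

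The core problem is that you are assuming, from area-minimization alone, uniform geometric control on $M$ with constants depending only on $n$: volume doubling, a Neumann--Poincar\'e inequality with $C(n)$, and a weak Harnack inequality with $c(n)$. Monotonicity gives you the \emph{lower} volume bound $\mathrm{Vol}(\mathbb B_\rho\cap M)\ge\omega_n\rho^n$, but not the upper bound, so you do not get doubling; and Bombieri--Giusti's Neumann--Poincar\'e is for area-minimizing \emph{hypersurfaces}, not for high-codimension minimizers. Without an a priori upper volume bound all three steps (Caccioppoli $\to$ Poincar\'e $\to$ Harnack) collapse, and your volume comparison $|\pi(\mathbb B_R\cap M)|/\mathrm{Vol}(\mathbb B_R\cap M)\ge c>0$ is exactly equivalent to what you are trying to prove. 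So the argument is circular at the crucial point.

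The idea you are missing, and which the paper uses to break this circularity, is that the hypothesis \eqref{3lailaj} already forces a \emph{one-sided} Hessian bound: from $\la_i\la_j\ge -3$ and the phase condition one gets $\la_i\ge -\La$ for an explicit $\La=\La(n)$ (Lemma~\ref{mulowerbd}). One then applies a Lewy rotation $F_\La$ to $M$; the rotated surface $F_\La(M)$ is again a special Lagrangian graph, now with \emph{two-sided} bounded Hessian $-\tfrac{2\La^2+1}{\La}\le D^2\bar u\le 2\La$. This single observation gives, for free, the Euclidean volume bound (Proposition~\ref{vol}) and a Neumann--Poincar\'e inequality on $M$ with constant depending on $n,\La$ (the constant $\kappa$ in Section~3 comes from the bounded slope of $F_\La(M)$, not from minimization). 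With that in hand the paper runs a De~Giorgi--Moser argument to get a mean value inequality for $v^{-1/n}$ (not $e^{-v}$; note also that the relevant differential inequality is $\De_M v^{-1/n}\le\tfrac{K-1}{n}v^{-1/n}|B_M|^2$, which involves the second fundamental form and is not pure superharmonicity), and then closes by a blow-up and contradiction using GMT: rescale at a carefully chosen density level so that Allard gives $|B_M|\le1$, pass to a varifold limit with multiplicity one (Lemma~\ref{Multi1}) and small singular set (Lemma~\ref{SingHD}), and derive a contradiction with Yuan's Bernstein theorem. In short, the Lewy rotation is what supplies all the geometric constants you were hoping to get from minimization.
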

In fact, we have a litter stronger result than the above theorem. More precisely, there exists a constant $\ep_n>0$ depending only on $n$ such that if a smooth solution $u$ to \eqref{SL} on $\R^n$ satisfies
$$3(1+\ep_n)+(1+\ep_n)\la_i^2+2\la_i\la_j\ge0$$
on $\R^n$ for all $i,j$, then $u$ is a quadratic polynomial (see Theorem \ref{mainLiou}).
Using Warren-Yuan's argument in \cite{WmY2}, in order to prove Theorem \ref{MAIN} it is sufficient to show the following global Hessian estimate.
\begin{theorem}\label{GLOBALBDu}
For any constant $K\ge1$, there is a constant $c_{n,K}>0$ depending only on $n,K$ such that if $u$ is a smooth solution to \eqref{SL} on $\R^n$ with the eigenvalues $\la_1,\cdots,\la_n$ of the Hessian $D^2u$ satisfying
\begin{equation}\aligned\label{Assump0}
\la_i\la_j\ge-K\qquad\qquad on\ \R^n
\endaligned
\end{equation}
for all $i,j=1,\cdots,n$, and $u$ is not a quadratic polynomial, then the Hessian of $u$ satisfies $-c_{n,K}\le D^2u\le c_{n,K}$ on $\R^n$.
\end{theorem}

The geometric meaning of \eqref{Assump0} is that determinant of $\mathrm{Hess}_Su$ on any 2-dimensional surface $S$ of $\R^n$ has a lower bound by $-K$,
where $\mathrm{Hess}_Su$ is the Hessian of $u$ restricted on $S$.
Without the condition \eqref{Assump0}, $D^2u$ may be unbounded. For instance, those harmonic functions have the unbounded Hessian on $\R^2$ as they are solutions to \eqref{SL} for $n=2$, $\Th=0$.
Theorem \ref{GLOBALBDu} is proved by contradiction with the help of geometric measure theory, where we use the mean value inequality on special Lagrangian graphs for superharmonic functions in terms to the Hessian of solutions. Here, the mean value inequality is established due to
the Neumann-Poincar$\mathrm{\acute{e}}$ inequality on the graphs.
It is worth to point out that Bombieri-Giusti had established the Neumann-Poincar$\mathrm{\acute{e}}$ inequality on area-minimizing hypersurfaces in Euclidean space,
and given many applications to area-minimizing hypersurfaces \cite{BG}.

One application of Theorem \ref{MAIN} is the interior curvature estimate of special Lagrangian graphs (see Corollary \ref{CurvEst}).
With curvature estimate, we can obtain a new interior Hessian estimate for solutions of the special Lagrangian equation \eqref{SL}. Before stating our result,
let us review the known results in this direction.

In the 1950s, Heinz derived a Hessian bound for \eqref{SL} with $n=2$ and $\Th=\pi/2$ (i.e., the Monge-Amp$\mathrm{\grave{e}}$re equation); Pogorelov \cite{P} got Hessian estimates for (1.1) with $n=2$ and $\Th>\pi/2$.
Bao-Chen \cite{BC} got Hessian estimates in terms of certain integrals of the Hessian
for solutions to \eqref{SL} with $n=3$, $\Th=\pi$.
Warren-Yuan obtained Hessian estimates of \eqref{SL} in terms of gradients for solutions to \eqref{SL} in the following cases:
i) the solutions satisfies \eqref{3eplailaj} with small gradients in \cite{WmY2}; ii) $n=2$ in \cite{WmY4}; iii) $n=3$ and $|\Th|\ge\f{\pi}2$ in \cite{WmY3,WmY1}.
For general $n$, Chen-Warren-Yuan \cite{CWY} derived a priori interior Hessian estimates for smooth convex solutions to \eqref{SL}
(see the very recent work \cite{CSY} for convex viscosity solutions).
In \cite{WdY}, Wang-Yuan obtained a priori interior Hessian estimates for all the solutions to \eqref{SL} with critical and supercritical phases in dimensions $\ge3$.
More precisely, for any $n\ge3$, there is a constant $c_n$ depending on $n$ such that for any smooth solution on $B_R(0)\subset\R^n$ to \eqref{SL} with $|\Th|\ge(n-2)\f{\pi}2$, there holds
\begin{equation}\aligned
|D^2u(0)|\le c_{n} \mathrm{exp}\left(c_{n}R^{2-2n}\max_{B_R(0)}|Du|^{2n-2}\right),
\endaligned
\end{equation}
and when $|\Th|=(n-2)\f{\pi}2$, there holds
\begin{equation}\aligned
|D^2u(0)|\le c_{n} \mathrm{exp}\left(c_{n}R^{4-2n}\max_{B_R(0)}|Du|^{2n-4}\right).
\endaligned
\end{equation}
From the counter-examples constructed by Nadirashvili-Vl$\mathrm{\breve{a}}$dut \cite{NV} and Wang-Yuan \cite{WdY0},
the condition $|\Th|\ge(n-2)\f{\pi}2$ above is necessary.

Hessian estimates for the special Lagrangian equation \eqref{SL} are equivalent to gradient estimates for special Lagrangian graphs, which are minimizing submanifolds.
In \cite{Fi}, Finn obtained gradient estimates in terms of the linear exponential dependence on the solutions of 2-dimensional minimal surfaces equation.
In high dimensions, Bombieri-De Giorgi-Miranda \cite{BDM} derived gradient estimates in terms of the linear exponential dependence on the solutions to
minimal hypersurfaces equation. Wang studied the high codimension case under some conditions in \cite{W1}. 

With curvature estimate and the mean value inequality on special Lagrangian graphs for superharmonic functions,
we derive a new interior Hessian estimate in terms of the exponential dependence on the $n$-th power of gradient of the solutions.
\begin{theorem}\label{Hessest0}
Let $u$ be a smooth solution to the special Lagrangian equation \eqref{SL} on $B_R(0)\subset\R^n$. Suppose that \eqref{3lailaj} holds on $B_R(0)$
for all $i,j=1,\cdots,n$. Then there is a constant $C_{n}>0$ depending only on $n$ such that
\begin{equation}\aligned\label{Sharpest}
|D^2u(0)|\le C_{n} \mathrm{exp}\left(C_{n}\f{\max_{B_R(0)}|Du-Du(0)|^n}{R^n}\right).
\endaligned
\end{equation}
\end{theorem}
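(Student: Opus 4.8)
The plan is to transplant the gradient estimate of Bombieri--De Giorgi--Miranda for the minimal surface equation (and Finn's two-dimensional estimate) to the special Lagrangian graph $M=\{(x,Du(x)):x\in B_R(0)\}\subset\R^{2n}$, which is minimal and area-minimizing; under this dictionary $D^2u$ plays the role of the gradient of a scalar minimal graph and $Du-Du(0)$ plays the role of the height, so that \eqref{Sharpest} is exactly the expected analogue. First I would normalise. Since \eqref{SL} and \eqref{3lailaj} are preserved by $u\mapsto\mu^{-2}u(\mu\,\cdot)$ and by subtracting an affine function, and the right-hand side of \eqref{Sharpest} is invariant under both, we may take $R=1$, $u(0)=0$ and $Du(0)=0$; put $m:=\max_{B_1(0)}|Du|$. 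Because $\sqrt{1+|D^2u(0)|^2}\le V(0)$, where $V:=\sqrt{\det(I+D^2uD^2u)}=\prod_i\sqrt{1+\la_i^2}$, it is enough to prove $V(0)\le C_ne^{C_nm}$. Observe that $M$ passes through $P_0:=(0,0)$ and lies in the cylinder $B_1\times\overline{B_m}$, hence in the ball $B_{\sqrt{1+m^2}}(P_0)$; this ``vertical'' boundedness of $M$, combined with its minimality, is what has to force $V(0)$ to be finite.

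The ingredients available from the paper are the following. First, under \eqref{3lailaj} the function $b:=\log V=\tfrac12\log\det(I+D^2uD^2u)$ is subharmonic on $M$ --- this is exactly the subharmonicity for which condition \eqref{3eplailaj} was introduced by Warren--Yuan --- and, equivalently, $1/V$ is a positive superharmonic-type function on $M$; moreover $1/V$ is the Jacobian of the projection $\pi\colon M\to\R^n$, so that $\int_{M\cap B_\rho(P_0)}V^{-1}\,d\mathcal H^n=\big|\pi(M\cap B_\rho(P_0))\big|$. Secondly, since $M$ is area-minimizing, the monotonicity formula gives the lower density bound $\mathcal H^n(M\cap B_\rho(P_0))\ge\omega_n\rho^n$, and the interior curvature estimate of Corollary \ref{CurvEst} (a blow-up consequence of the Liouville Theorem \ref{MAIN}) furnishes the matching upper bound $\mathcal H^n(M\cap B_\rho(P_0))\le C_n\rho^n$, valid for $\rho$ up to a fixed fraction of $1$. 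Thirdly, because $M$ is area-minimizing, it carries a Neumann--Poincar\'e inequality with uniform constant --- in the spirit of Bombieri--Giusti for area-minimizing hypersurfaces --- and hence a mean value inequality on $M$ for superharmonic functions.

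Given these, the argument would run in the BDGM pattern. The mean value inequality, applied to $1/V$ on balls $B_\rho(P)$ with $\rho$ a fixed fraction of the distance from $P$ to $\partial M$, gives a Harnack-type comparison of $V^{-1}(P)$ with the average of $V^{-1}$ over $M\cap B_\rho(P)$; iterating this along $M$ --- equivalently, running a De Giorgi iteration with cut-offs adapted to the $Du$-direction as in Bombieri--De Giorgi--Miranda --- one arrives at a bound of the form $V^{-1}(P_0)\ge c_n^{\,N}V^{-1}(Q)$, where $Q\in M$ is a reference point and $N$ is the number of steps joining $P_0$ to $Q$. Because $M$ is trapped in $B_{\sqrt{1+m^2}}(P_0)$ and has bounded density and bounded second fundamental form (ingredient two), $N$ can be taken of order $1+m$; and the area-minimizing property of $M$, together with the bound $\max_{B_1}|u-u(0)|\le m$ on the ambient height, is used to locate a reference point $Q$ at which $V(Q)\le C_n$. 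Combining, $V(0)\le C_ne^{C_nm}$, hence $|D^2u(0)|\le V(0)\le C_ne^{C_nm}$, and undoing the normalisation gives \eqref{Sharpest}.

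The main obstacle is to make the dependence on $m$ \emph{linear} inside the exponential, i.e.\ to show that the number of iteration steps (or, in the integral formulation, the relevant weighted integral of $\log V$) grows only like $m$ and not like a power of $m$. In Wang--Yuan's Hessian estimates under supercritical phase the exponent is only polynomial in $\max|Du|$, of order $2n-2$, precisely because the constants in their integral iteration degenerate as the volume of $M$ grows; here the situation is better because \eqref{3lailaj}, unlike a bare phase bound, yields a genuine Liouville theorem and hence the uniform curvature and density bounds of Corollary \ref{CurvEst}. The delicate point is thus to organise the iteration and the competitor comparison so that these uniform bounds are used at every scale and $m$ enters exactly to the first power --- this is where the area-minimizing property must be exploited quantitatively, and the optimality of the linear exponent parallels the classical sharpness of the Bombieri--De Giorgi--Miranda and Finn gradient estimates for minimal graphs.
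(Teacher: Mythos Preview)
Your overall strategy is correct and matches the paper's: reduce to bounding $V(0)=\sqrt{\det(I+D^2uD^2u)}(0)$, use the curvature bound of Corollary~\ref{CurvEst} together with a mean value inequality for a negative power of $V$ on the graph, and chain this Harnack-type estimate over roughly $1+m$ steps to get the linear exponential. But there is one genuine gap that the paper handles and you do not.

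You invoke a Neumann--Poincar\'e inequality on $M$ with uniform constant ``in the spirit of Bombieri--Giusti for area-minimizing hypersurfaces''. Bombieri--Giusti's argument is specific to codimension one (it uses that an area-minimizing hypersurface bounds a set of least perimeter); no analogous uniform Neumann--Poincar\'e inequality is available for area-minimizing submanifolds of higher codimension. The paper's Neumann--Poincar\'e inequality (Lemma preceding Theorem~\ref{MVIsuper}) is obtained by projecting to $\R^n$, and its constant contains the factor $\kappa=\sup\sqrt{\det(I+D^2uD^2u)}$ --- exactly the quantity you are trying to control --- so applying it directly on $M$ is circular. This is not a technicality: without a Neumann--Poincar\'e inequality with controlled constant, the De Giorgi--Nash--Moser iteration behind Theorem~\ref{MVIsuper} does not go through, and you have no mean value inequality to chain.

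The paper's remedy is the Lewy rotation $F_\La$ of \S2. Under \eqref{3lailaj} (hence $D^2u\ge -\La$ for a dimensional $\La$ by Lemma~\ref{mulowerbd}), the rotated graph $F_\La(M)$ is again a gradient graph with \emph{uniformly bounded} Hessian \eqref{BOUNDD2u}, so on $F_\La(M)$ the volume element is bounded, the Neumann--Poincar\'e constant is dimensional, and Theorem~\ref{MVIsuper} applies with $\kappa$ depending only on $n$. The function $v_\La=V\circ F_\La^{-1}$ satisfies $\De_{F_\La(M)}v_\La^{-1/n}\le c_n|B_{F_\La(M)}|^2\,v_\La^{-1/n}$, and now the curvature bound makes the right-hand side $\le \beta\,v_\La^{-1/n}$ with dimensional $\beta$. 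The chaining is then carried out not on $M$ but in the convex image $\bar x(M_1)\subset\R^n$, which has diameter $\lesssim 1+L$ (this is where the gradient bound enters linearly); a fixed-spacing lattice in $\bar x(M_1)$ gives $O(1+L)$ steps, and at the end one uses $\int_{M_1}V^{-1}=\omega_n$ rather than locating a single good reference point $Q$. In short, your iteration scheme is the right one, but it must be run on $F_\La(M)$, not on $M$ itself.
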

In \cite{Fi}, Finn constructed solutions to minimal surface equation in $\R^3$ whose gradients have the linear exponential dependence on the solutions.
With Heinz transformation \cite{J}, there is a smooth solution $\psi$ to \eqref{SL} with $n=2$ and $\Th=\pi/2$ (i.e., the Monge-Amp$\mathrm{\grave{e}}$re equation), whose Hessian indeed has the linear exponential dependence on $D\psi$ (see also the introduction in \cite{WdY}). Here, the order $n$ in \eqref{Sharpest} is just the codimension of the special Lagrangian graph $\{(x,Du(x))\in\R^n\times\R^n|\, x\in B_R(0)\}$ in $\R^n\times\R^n$.

Our Hessian estimate \eqref{Sharpest} is effective for the smooth convex solutions. Moreover, via subharmonic functions obtained by Wang-Yuan \cite{WdY}, our strategy of the proof of Theorem \ref{Hessest0} is effective for smooth solutions to the special Lagrangian equation \eqref{SL} with supercritical phase, i.e., $|\Th|>(n-2)\pi/2$ (see Theorem \ref{GEsptP}).
However, the strategy is ineffective for critical phase, i.e., $|\Th|=(n-2)\pi/2$, because in this situation the Hessian of the solutions may be not uniformly bounded from below for $\Th=(n-2)\pi/2$, or above for $\Th=-(n-2)\pi/2$.

\section{Lewy rotation for special Lagrangian graphs over convex sets}

In this paper, we denote $B_r(x)$ be the ball in $\R^n$ with the radius $r$ and centered at $x\in\R^n$.
Denote $\mathbf{B}_r(\mathbf{x})$ be the ball in $\R^{n+n}$ with the radius $r$ and centered at $\mathbf{x}\in\R^{n+n}$.
Let $B_r=B_r(0)$, $\mathbf{B}_r=\mathbf{B}_r(\mathbf{0})$ for convenience.
For any subset $E$ in $\R^n$ and any constant $0\le s\le n$, let $\mathcal{H}^s(E)$ denote the $s$-dimensional Hausdorff measure of $E$.
Let $\Pi$ be a projection from $\R^n\times\R^n$ into $\R^n$ defined by $\Pi(\mathbf{x})=x$ for any $\mathbf{x}=(x,y)\in\R^n\times\R^n$.

Let $u$ be a smooth solution to the special Lagrangian equation \eqref{SL} on an open convex set $\Om$ of $\R^n$.
Assume
\begin{equation}\aligned\label{lowerLa}
\inf_{\Om}D^2u\ge-\La\qquad\qquad\mathrm{for\ some\ constant}\ \La>0.
\endaligned
\end{equation}
Then $\hat{u}(x)\triangleq u(x)+\f{\La}2|x|^2$ is convex. For $x,x'\in\Om$, from the segment $\overline{xx'}\subset\Om$ we have
\begin{equation}\aligned
\lan x-x',D\hat{u}(x)-D\hat{u}(x')\ran\ge0,
\endaligned
\end{equation}
which implies
\begin{equation}\aligned\label{x2x1}
\lan x-x',Du(x)-Du(x')\ran\ge-\La|x-x'|^2.
\endaligned
\end{equation}
In \cite{Y1}, Yuan introduced the Lewy rotation as follows, which turns out to be a standard technique nowadays, but still very powerful in studying special Lagrangian equation.
Let $F_\La:\,(x,y)\rightarrow(\hat{x},\hat{y})$ be the Lewy rotation defined by
\begin{equation}\label{labxby}
\left\{\begin{split}
&\hat{x}=(\hat{x}_1,\cdots,\hat{x}_n)=\f1{\sqrt{4\La^2+1}}(2\La x+y)\\
&\hat{y}=(\hat{y}_1,\cdots,\hat{y}_n)=\f1{\sqrt{4\La^2+1}}(-x+2\La y)\\
\end{split}\right.,
\end{equation}
which is an isometry from $\R^n\times\R^n$ to $\R^n\times\R^n$.
Let $M$ be a graph over $\Om$ defined by $\{(x,Du(x))\in\R^n\times\R^n|\ x\in\Om\}$.
We call $M$ \emph{a special Lagrangian graph}.
Let $\bar{x},\bar{y}:\ \Om\to\R^n$ be smooth mappings defined by
\begin{equation}\aligned\label{barxbaryx}
\bar{x}(x)=&\hat{x}(x,Du(x))=\f1{\sqrt{4\La^2+1}}(2\La x+Du(x))\\
\bar{y}(x)=&\hat{y}(x,Du(x))=\f1{\sqrt{4\La^2+1}}(-x+2\La Du(x))
\endaligned\qquad\mathrm{for\ any}\ x\in\Om.
\end{equation}

Combining \eqref{x2x1} (see also \cite{Y1}), for any $x,x'\in\Om$ we have
\begin{equation}\aligned\label{Dux2x1}
&|\bar{x}(x)-\bar{x}(x')|^2=|\hat{x}(x,Du(x))-\hat{x}(x',Du(x'))|^2\\
=&\f1{4\La^2+1}\Big(4\La^2|x-x'|^2+4\La\lan x-x',Du(x)-Du(x')\ran+|Du(x)-Du(x')|^2\Big)\\
\ge&\f1{4\La^2+1}\Big(2\La^2|x-x'|^2+2\La\lan x-x',Du(x)-Du(x')\ran+|Du(x)-Du(x')|^2\Big)\\
\ge&\f{\La^2}{4\La^2+1}|x-x'|^2.
\endaligned
\end{equation}
Hence, $\bar{x}:\, \Om\rightarrow \bar{x}(\Om)=\{\bar{x}(x,Du(x))|\,x\in\Om\}$ is injective and then $F_\La(M)$ is a graph over $\bar{x}(\Om)$.

Let $J$ be the Jacobi of the mapping $\bar{x}$, i.e., 
\begin{equation}\aligned\label{DEFJ}
J=\left(\f{\p\bar{x}_i}{\p x_j}\right)=\f1{\sqrt{4\La^2+1}}(2\La I+D^2u(x)),
\endaligned
\end{equation}
and $\widehat{J}$ be the Jacobi of the mapping $\bar{y}$, i.e., $$\widehat{J}=\left(\f{\p\bar{y}_i}{\p x_j}\right)=\f1{\sqrt{4\La^2+1}}(-I+2\La D^2u(x)).$$
Note that both of $J$ and $\widehat{J}$ are symmetric matrices. With the diagonalization of $D^2u$, it is easy to show $J^{-1}\widehat{J}=\widehat{J}J^{-1}$. Since
\begin{equation}\aligned\label{yibxj}
\f{\p\bar{y}_i}{\p\bar{x}_j}=\sum_{k=1}^n\f{\p\bar{y}_i}{\p x_k}\f{\p x_k}{\p\bar{x}_j},
\endaligned
\end{equation}
then
$\left(\f{\p\bar{y}_i}{\p\bar{x}_j}\right)=\widehat{J}J^{-1}=J^{-1}\widehat{J}$, i.e., $\left(\f{\p\bar{y}_i}{\p\bar{x}_j}\right)$ is symmetric.
From \eqref{DEFJ} and the convex $u(x)+\f{\La}2|x|^2$, it's clear that the determinant of $J$ is positive, i.e., $\mathrm{det}J>0$.
With the injective $\bar{x}:\, \Om\rightarrow \bar{x}(\Om)$, we conclude that $\bar{x}:\, \Om\rightarrow \bar{x}(\Om)$ is a diffeomorphism. 
In particular, $\bar{x}(\Om)$ is simply connected since $\Om$ is convex. From Frobenius' theorem (see Lemma 7.2.11 in \cite{X} for instance), there is a function $\bar{u}$ on $\bar{x}(\Om)$ such that
\begin{equation}
D\bar{u}\big|_{\bar{x}(x)}=\bar{y}(x)=\f1{\sqrt{4\La^2+1}}(-x+2\La Du(x)).
\end{equation}
From \eqref{yibxj}, we have
\begin{equation}\aligned\label{RLubu}
D^2\bar{u}\big|_{\bar{x}(x)}=J^{-1}\widehat{J}=(2\La I+D^2u(x))^{-1}(-I+2\La D^2u(x)),
\endaligned
\end{equation}
which is equivalent to
\begin{equation}\aligned
D^2u(x)=(2\La I-D^2\bar{u})^{-1}(I+2\La D^2\bar{u})\big|_{\bar{x}(x)}.
\endaligned
\end{equation}
Note that both of $D\bar{u}$ and $D^2\bar{u}$ are independent of the choice of $\bar{u}$.
From \eqref{lowerLa}, for all $(\bar{x},D\bar{u}(\bar{x}))\in F_\La(M)$ we have
\begin{equation}\aligned\label{BOUNDD2u}
-\f{2\La^2+1}{\La}\le D^2\bar{u}(\bar{x})\le2\La.
\endaligned
\end{equation}
Using \eqref{BOUNDD2u}, we immediately have a volume estimate for special Lagrangian graph $M$ as follows.
\begin{proposition}\label{vol}
Suppose that $u$ is a smooth solution to the special Lagrangian equation on an open convex set $\Om\subset\R^n$ with \eqref{lowerLa}, and $M=\{(x,Du(x))\in\R^n\times\R^n|\ x\in\Om\}\subset\R^n\times\R^n$. Then for each $R>0$
\begin{equation}\aligned
\mathcal{H}^n(M\cap \mathbf{B}_R)\le \omega_n(4\La^2+5+\La^{-2})^{\f n2}R^n.
\endaligned
\end{equation}
\end{proposition}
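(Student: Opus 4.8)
The plan is to push $M$ forward by the Lewy rotation $F_\La$ and read off the volume of the resulting graph directly from the Hessian bound \eqref{BOUNDD2u}. Since $F_\La$ in \eqref{labxby} is a \emph{linear} isometry of $\R^n\times\R^n$, it fixes the origin, so $F_\La(\mathbf{B}_R)=\mathbf{B}_R$ and hence
\begin{equation*}
\mathcal{H}^n(M\cap\mathbf{B}_R)=\mathcal{H}^n\big(F_\La(M)\cap\mathbf{B}_R\big).
\end{equation*}
By the discussion preceding the proposition (using \eqref{lowerLa} to see $2\La I+D^2u\ge\La I>0$, the injectivity of $\bar x$ from \eqref{Dux2x1}, and the convexity of $\bar x(\Om)$), $F_\La(M)=\{(\bar x,D\bar u(\bar x))\,|\,\bar x\in\bar x(\Om)\}$ is a smooth graph over the convex set $\bar x(\Om)\subset\R^n$.

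Next I would introduce $U=\{\bar x\in\bar x(\Om)\,|\,(\bar x,D\bar u(\bar x))\in\mathbf{B}_R\}$, so that $F_\La(M)\cap\mathbf{B}_R$ is exactly the graph of $D\bar u$ over $U$. Because $\mathbf{B}_R$ is centered at the origin, $(\bar x,D\bar u(\bar x))\in\mathbf{B}_R$ forces $|\bar x|\le R$, whence $U\subset B_R$ and $\mathcal{H}^n(U)\le\omega_nR^n$. The area formula for graphs of gradient maps then gives
\begin{equation*}
\mathcal{H}^n\big(F_\La(M)\cap\mathbf{B}_R\big)=\int_U\sqrt{\det\big(I+(D^2\bar u)^2\big)}\,d\bar x=\int_U\prod_{i=1}^n\sqrt{1+\bar\mu_i^2}\,d\bar x,
\end{equation*}
where $\bar\mu_1,\dots,\bar\mu_n$ denote the eigenvalues of the symmetric matrix $D^2\bar u(\bar x)$.

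It then remains only to bound the integrand pointwise. From \eqref{BOUNDD2u} every $\bar\mu_i$ lies in $\big[-\tfrac{2\La^2+1}{\La},\,2\La\big]$, and since $\tfrac{2\La^2+1}{\La}=2\La+\La^{-1}\ge 2\La$ we get $\bar\mu_i^2\le(2\La+\La^{-1})^2=4\La^2+4+\La^{-2}$, so $1+\bar\mu_i^2\le 4\La^2+5+\La^{-2}$. Hence the integrand is at most $(4\La^2+5+\La^{-2})^{n/2}$, and combining with $\mathcal{H}^n(U)\le\omega_nR^n$ yields the claimed estimate. The argument is essentially mechanical once \eqref{BOUNDD2u} is available; the only point demanding a little care is the identification of $F_\La(M)\cap\mathbf{B}_R$ with the graph of $D\bar u$ over a subset of $B_R$, which rests on $F_\La$ being an isometry fixing the origin together with the injectivity of $\bar x$ from \eqref{Dux2x1}.
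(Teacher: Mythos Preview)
Your proof is correct and follows essentially the same route as the paper: apply the linear isometry $F_\La$ (which preserves $\mathbf{B}_R$), write $F_\La(M)\cap\mathbf{B}_R$ as a graph of $D\bar u$ over a subset of $B_R$, and bound the volume element pointwise using the Hessian bound \eqref{BOUNDD2u}. Your exposition is in fact slightly more explicit than the paper's in tracking why $(2\La^2+1)/\La$ dominates and how the constant $4\La^2+5+\La^{-2}$ arises.
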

\begin{proof}
Let $F_\La:\, (x,y)\rightarrow(\hat{x},\hat{y})$ be the isometric mapping defined before, and $\bar{u}$ be the function defined on $\bar{x}(\Om)$ as before.
In other words, $D\bar{u}$ is the graphic function of $F_\La(M)$.
Since $\Pi(F_\La(M))=\bar{x}(M)$,
then from \eqref{BOUNDD2u} we have
\begin{equation}\aligned
&\mathcal{H}^n(F_\La(M)\cap \mathbf{B}_R)=\int_{\Pi(F_\La(M)\cap \mathbf{B}_R)}\sqrt{\det(I+D^2\bar{u}D^2\bar{u})}\\
\le&\int_{\Pi(F_\La(M)\cap \mathbf{B}_R)}\left(1+\f{(2\La^2+1)^2}{\La^2}\right)^{\f n2}\le (4\La^2+5+\La^{-2})^{\f n2}\mathcal{H}^n(B_R).
\endaligned
\end{equation}
Hence
\begin{equation}\aligned
\mathcal{H}^n(M\cap \mathbf{B}_R)=\mathcal{H}^n(F_\La(M)\cap \mathbf{B}_R)\le \omega_n(4\La^2+5+\La^{-2})^{\f n2}R^n.
\endaligned
\end{equation}
This completes the proof.
\end{proof}

\section{
Mean value inequality on special Lagrangian graphs}

Let $u$ be a smooth solution to the special Lagrangian equation \eqref{SL} on an open set $\Om\subset\R^n$, and
$M=\mathrm{graph}_{Du}\triangleq\{(x,Du(x))\in\R^n\times\R^n|\ x\in\Om\}$ be a special Lagrangian graph over $\Om$ with $\mathbf{0}\in M$, $\p M\subset\p \mathbf{B}_R\subset\R^n\times\R^n$.
Let $\na$ be the Levi-Civita connection of $M$ with the induced metric $(\de_{ij}+\sum_{k=1}^nu_{ik}u_{jk})dx_idx_j$ from $\R^n\times\R^n$.
Let $\De_M$ denote the Laplacian of $M$ with this induced metric.
Here, $\p_{ik}u$ denotes the derivative of $u$ with respect to $x_i,x_k$.
Recall Sobolev inequality on minimal submanifolds proved by Michael-Simon \cite{MS} (see also \cite{B} by Brendle):
\begin{equation}\aligned\label{Sob}
\left(\int_{M}|\varphi|^{\f n{n-1}}\right)^{\f{n-1}n}\le c_{n}\int_M|\na\varphi|
\endaligned
\end{equation}
for any function $\varphi\in W^{1,1}_0(M)$, where $c_n\ge1$ is a constant depending only on $n$.
For any nonnegative subharmonic function on $M$, there holds
the mean value inequality (\cite{GT}\cite{MS}). Furthermore, if $\De_M\psi\ge-\th\psi$ for a nonnegative function $\psi$ on $M$ with a constant $\th\ge0$, then from Corollary 1.16 in \cite{CM1}, it follows that
\begin{equation}\aligned\label{MVIsub}
\psi(0)\le\f{e^{\f12\th r^2}}{\omega_n r^n}\int_{M\cap\mathbf{B}_{r}}\psi
\endaligned
\end{equation}
for any $0<r<R$.
Let $\k\ge1$ be a constant such that
\begin{equation}\aligned\label{DEFk}
\det(I+D^2uD^2u)\le \k^2\qquad \mathrm{on}\ \ \Om.
\endaligned
\end{equation}
Then $\p M\subset\p \mathbf{B}_R$ implies that $\Om$ contains a ball centered at the origin with the radius $R/\k$.
Since the Neumann-Poincar$\mathrm{\acute{e}}$ inequality holds on $\R^n$, then for any open set $V\subset B_r$ with rectifiable boundary $\p V$ and $r>0$, there holds
\begin{equation}\aligned
\min\left\{\mathcal{H}^{n}(V),\mathcal{H}^{n}(B_r\setminus V)\right\}\le c_n r\mathcal{H}^{n-1}(B_r\cap\p V)
\endaligned
\end{equation}
up to a choice of the constant $c_n\ge1$.
For any $\k r\le R$,
let $U$ be an open set in $M\cap\mathbf{B}_{\k r}$ with rectifiable boundary,
then $\Pi(U\cap\p \mathbf{B}_{\k r})\cap B_r=\emptyset$, and
\begin{equation}\aligned
\min\{\mathcal{H}^n(\Pi(U\cap\mathbf{B}_{r})),\mathcal{H}^n(\Pi(\mathbf{B}_{r}\setminus U))\}
\le c_n r\mathcal{H}^{n-1}(B_r\cap\p(\Pi(U))).
\endaligned
\end{equation}
Combining \eqref{DEFk} and $B_r\subset\Pi(\mathbf{B}_{\k r})$, we get
\begin{equation}\aligned
&\min\{\mathcal{H}^{n}(U\cap\mathbf{B}_{r}),\mathcal{H}^{n}(\mathbf{B}_{r}\setminus U)\}\\
\le& \min\left\{\int_{\Pi(U\cap\mathbf{B}_{r})}\sqrt{\det(I+D^2uD^2u)}dx,\int_{\Pi(\mathbf{B}_{r}\setminus U)}\sqrt{\det(I+D^2uD^2u)}dx\right\}\\
\le&\k\min\{\mathcal{H}^n(\Pi(U\cap\mathbf{B}_{r})),\mathcal{H}^n(\Pi(\mathbf{B}_{r}\setminus U))\}
\le c_n\k r\mathcal{H}^{n-1}(\mathbf{B}_{\k r}\cap\p U).
\endaligned
\end{equation}
By a standard argument (see Lemma 3.5 in \cite{D1} for instance), we have a Neumann-Poincar$\mathrm{\acute{e}}$ inequality on exterior balls as follows.
\begin{lemma}
\begin{equation}\aligned\label{NP}
\int_{M\cap\mathbf{B}_{r}}|f-\bar{f}_{r}|\le 2c_n\k r\int_{M\cap\mathbf{B}_{\k r}}|\na f|
\endaligned
\end{equation}
for all function $f\in W^{1,1}(M\cap\mathbf{B}_{\k r})$,
where $\bar{f}_{r}=\f1{\mathcal{H}^{n}(M\cap\mathbf{B}_{r})}\int_{M\cap\mathbf{B}_{r}}f$.
\end{lemma}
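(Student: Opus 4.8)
The plan is to obtain the $L^1$ Neumann--Poincar\'e inequality \eqref{NP} from the relative isoperimetric inequality
$$\min\{\mathcal{H}^{n}(U\cap\mathbf{B}_{r}),\mathcal{H}^{n}(\mathbf{B}_{r}\setminus U)\}\le c_n\k r\,\mathcal{H}^{n-1}(\mathbf{B}_{\k r}\cap\p U)$$
established just above (for $\k r\le R$ and open $U\subset M\cap\mathbf{B}_{\k r}$ with rectifiable boundary), by the classical coarea/Cavalieri device applied to the superlevel sets of $f$; this is exactly the standard argument of Lemma 3.5 in \cite{D1}. First I would reduce to smooth $f$: since $\mathcal{H}^n(M\cap\mathbf{B}_r)$ is finite and positive (finiteness from the Jacobian bound \eqref{DEFk}, positivity since $\mathbf{0}\in M$ and $M$ is a smooth $n$-manifold), and since the right-hand side of \eqref{NP} is finite by the $W^{1,1}$ hypothesis, approximating $f\in W^{1,1}(M\cap\mathbf{B}_{\k r})$ in $W^{1,1}$ by smooth functions makes both sides of \eqref{NP}, together with the average $\bar f_r$, pass to the limit, so it suffices to treat $f$ smooth near $\overline{M\cap\mathbf{B}_{\k r}}$.

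For such $f$, let $m$ be a \emph{median} of $f$ over $M\cap\mathbf{B}_r$, i.e. a real number with $\mathcal{H}^n(\{f>m\}\cap\mathbf{B}_r)\le\f12\mathcal{H}^n(M\cap\mathbf{B}_r)$ and $\mathcal{H}^n(\{f<m\}\cap\mathbf{B}_r)\le\f12\mathcal{H}^n(M\cap\mathbf{B}_r)$; one exists by monotonicity of $t\mapsto\mathcal{H}^n(\{f>t\}\cap\mathbf{B}_r)$. By Sard's theorem, for a.e. $t>0$ the set $U_t\triangleq\{f>m+t\}\cap\mathbf{B}_{\k r}$ is open in $M\cap\mathbf{B}_{\k r}$ with smooth (hence rectifiable) relative boundary $\{f=m+t\}\cap\mathbf{B}_{\k r}$, so the relative isoperimetric inequality applies to $U_t$; by the choice of $m$ it is the first term that achieves the minimum, whence
$$\mathcal{H}^n(\{f>m+t\}\cap\mathbf{B}_r)\le c_n\k r\,\mathcal{H}^{n-1}(\{f=m+t\}\cap\mathbf{B}_{\k r}).$$
Integrating in $t$ over $(0,\infty)$ turns the left side into $\int_{M\cap\mathbf{B}_r}(f-m)^+$ by Cavalieri's principle and the right side into $c_n\k r\int_{\{f>m\}\cap\mathbf{B}_{\k r}}|\na f|$ by the Riemannian coarea formula on $M$. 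Running the same argument for $-f$ (which has median $-m$) and adding gives
$$\int_{M\cap\mathbf{B}_r}|f-m|\le c_n\k r\Big(\int_{\{f>m\}\cap\mathbf{B}_{\k r}}|\na f|+\int_{\{f<m\}\cap\mathbf{B}_{\k r}}|\na f|\Big)\le c_n\k r\int_{M\cap\mathbf{B}_{\k r}}|\na f|.$$

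To finish, I would pass from the median to the mean: since $|\bar f_r-m|\,\mathcal{H}^n(M\cap\mathbf{B}_r)=\big|\int_{M\cap\mathbf{B}_r}(f-m)\big|\le\int_{M\cap\mathbf{B}_r}|f-m|$, the triangle inequality yields $\int_{M\cap\mathbf{B}_r}|f-\bar f_r|\le 2\int_{M\cap\mathbf{B}_r}|f-m|$, and combining with the previous display gives exactly \eqref{NP}. I expect the main (mild) obstacle to be the measure-theoretic bookkeeping: checking that for a.e. level $t$ the superlevel set $U_t$ genuinely qualifies as an ``open set with rectifiable boundary'' in the sense of the quoted inequality and, in particular, that the boundary term on the right counts only $\{f=m+t\}\cap\mathbf{B}_{\k r}$ and not the portion of $\p\{f>m+t\}$ lying on $\p\mathbf{B}_{\k r}$ (which is why passing to the smooth approximants before choosing levels, so that Sard's theorem is available, is convenient); the coarea formula and Cavalieri's principle on the smooth manifold $M$ are then completely standard.
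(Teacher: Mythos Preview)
Your proposal is correct and is precisely the ``standard argument'' the paper invokes: the paper does not write out a proof of \eqref{NP} but simply says it follows from the relative isoperimetric inequality by the argument of Lemma~3.5 in \cite{D1}, which is exactly the median/coarea/Cavalieri computation you carry out. Your handling of the factor~$2$ (median-to-mean via the triangle inequality) matches the constant in \eqref{NP}, and your caveat about Sard's theorem and the boundary bookkeeping is the only technical point that needs care.
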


Using \eqref{Sob}\eqref{MVIsub}\eqref{NP}, we can get the mean value inequality for superharmonic functions on $M$ as follows.
\begin{theorem}\label{MVIsuper}
Let $M$ be the special Lagrangian graph defined previously in this section and $\k$ be the constant in \eqref{DEFk}.
Suppose that $\phi$ is a positive function satisfying $\De_M\phi\le \be\phi$ on $M$ for some constant $\be>0$. Then $\phi$ satisfies mean value inequality as follows:
\begin{equation}\aligned
\int_{M\cap\mathbf{B}_{\r}}\phi^{\de_n}\le c_{\k,\be\r^2}\mathcal{H}^{n}(M\cap\mathbf{B}_{\r})\phi^{\de_n}(\mathbf{0})
\endaligned
\end{equation}
for any $\r\in(0,R/2]$, where $\de_n\in(0,1]$ is a constant depending on $n$, and $c_{\k,\be\r^2}$ is a positive constant depending only on $n,\k,\be\r^2$.
\end{theorem}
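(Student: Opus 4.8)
The plan is to read Theorem~\ref{MVIsuper} as a weak Harnack inequality for the positive ``supersolution'' $\phi$ of $\De_M\phi\le\be\phi$ on $M$, and to prove it by combining a Moser iteration on negative powers of $\phi$ — powered by the Michael--Simon Sobolev inequality \eqref{Sob} — with a John--Nirenberg argument for $\log\phi$ — powered by the Neumann--Poincar\'e inequality \eqref{NP} — while \eqref{MVIsub} furnishes the volume lower bound and the value of the iterated estimates at $\mathbf{0}$. Everywhere one uses the two-sided volume bound $\om_n s^n\le\mathcal{H}^n(M\cap\mathbf{B}_s)\le\k\,\om_n s^n$ for $\mathbf{B}_s\subset\mathbf{B}_R$: the left inequality is \eqref{MVIsub} with $\psi\equiv1$, and the right one follows from \eqref{DEFk} and $\Pi(M\cap\mathbf{B}_s)\subset B_s$, exactly as in Proposition~\ref{vol}; the same bounds, and a version of \eqref{NP}, hold for balls centred at any point of $M$ whose dilated concentric ball stays inside $\mathbf{B}_R$.

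First I would record two pointwise consequences of the hypothesis. For $\de>0$,
\[
\De_M(\phi^{-\de})=\de(\de+1)\phi^{-\de-2}|\na\phi|^2-\de\,\phi^{-\de-1}\De_M\phi\ \ge\ -\de\be\,\phi^{-\de},
\]
so $\phi^{-\de}$ is a nonnegative subsolution of $\De_M+\de\be$; and, setting $w:=-\log\phi$, one has $\De_M w=-\phi^{-1}\De_M\phi+|\na w|^2\ge|\na w|^2-\be$, i.e.\ $|\na w|^2\le\De_M w+\be$ on $M$. Testing the last inequality against $\eta^2$ for a standard cut-off $\eta$, integrating by parts, absorbing, and invoking the volume bounds, one gets $\int_{M\cap\mathbf{B}_{\k s}}|\na w|^2\le C_n(\be+(\k s)^{-2})\mathcal{H}^n(M\cap\mathbf{B}_{2\k s})$, hence $\int_{M\cap\mathbf{B}_{\k s}}|\na w|\le C_n\k^{n+1}s^{n-1}(1+\be s^2)^{1/2}$ by Cauchy--Schwarz. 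Feeding this into \eqref{NP} and dividing by $\mathcal{H}^n(M\cap\mathbf{B}_s)\ge\om_n s^n$ shows that $w$ has bounded mean oscillation on all balls $\mathbf{B}_s(\mathbf{p})\subset\mathbf{B}_\r$ centred at points of $M$, with $\mathrm{BMO}$-bound $\La_0=\La_0(n,\k,\be\r^2)$. As $M\cap\mathbf{B}_\r$ carries the doubling measure $\mathcal{H}^n$ and the Poincar\'e inequality \eqref{NP}, the standard stopping-time proof of the John--Nirenberg inequality applies and produces $\de_0>0$ and $c_1$, depending only on $n,\k,\be\r^2$, with $\int_{M\cap\mathbf{B}_\r}e^{\de_0|w-\bar w_\r|}\le c_1\mathcal{H}^n(M\cap\mathbf{B}_\r)$, where $\bar w_\r$ is the average of $w$ on $M\cap\mathbf{B}_\r$; in particular $\int_{M\cap\mathbf{B}_\r}\phi^{\pm\de_0}\le c_1\mathcal{H}^n(M\cap\mathbf{B}_\r)\,e^{\mp\de_0\bar w_\r}$.

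Next I would run Moser's iteration for the nonnegative subsolution $\phi^{-\de}$ of $\De_M+\de\be$: test against $\eta^2(\phi^{-\de})^{q}$, use Cauchy--Schwarz and the (dimensionally normalised) Sobolev inequality \eqref{Sob} to gain a fixed exponent factor $\chi>1$ at each step, iterate over a geometric sequence of radii in $[\r/2,\r]$, and lower the starting exponent by the usual Young-inequality device; the zeroth-order term $\de\be\,\phi^{-\de}$ only enters the constants, yielding the dependence on $\be\r^2$. Combined with \eqref{MVIsub} (to pass from the resulting $L^\de$-average to the value at $\mathbf{0}$) this gives, for a suitable small $\de_n\in(0,1]$,
\[
\phi^{-\de_n}(\mathbf{0})\ \le\ c_2\,\f1{\mathcal{H}^n(M\cap\mathbf{B}_\r)}\int_{M\cap\mathbf{B}_\r}\phi^{-\de_n},\qquad c_2=c_2(n,\k,\be\r^2).
\]
Taking $\de_n:=\min\{\de_0,\f1n\}$ and combining the last display with $\int_{M\cap\mathbf{B}_\r}\phi^{-\de_n}\le c_1\mathcal{H}^n(M\cap\mathbf{B}_\r)e^{\de_n\bar w_\r}$ gives $e^{-\de_n\bar w_\r}\le c_1c_2\,\phi^{\de_n}(\mathbf{0})$; substituting this into $\int_{M\cap\mathbf{B}_\r}\phi^{\de_n}\le c_1\mathcal{H}^n(M\cap\mathbf{B}_\r)e^{-\de_n\bar w_\r}$ yields precisely the asserted inequality with $c_{\k,\be\r^2}=c_1^2c_2$.

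The main obstacle is the passage from negative to positive powers of $\phi$: negative powers are subsolutions, controlled from above by Moser iteration via \eqref{Sob}, but positive powers of a supersolution need not be locally bounded, so they cannot be iterated directly — this is exactly where \eqref{NP} is indispensable, through the bounded mean oscillation of $\log\phi$ and the two-sided exponential integrability it produces via John--Nirenberg. What remains is bookkeeping: confirming that $\de_n$ may be chosen depending on $n$ alone (the integrability threshold for supersolutions being the dimensional exponent attached to the Sobolev iteration, so that the $\k$- and $\be\r^2$-dependence can be confined to $c_{\k,\be\r^2}$), carrying the zeroth-order term $\be\phi$ cleanly through the Moser iteration and through \eqref{MVIsub}, and keeping the dilated balls produced by \eqref{NP} inside $\mathbf{B}_R$, which is what fixes the admissible range of $\r\in(0,R/2]$.
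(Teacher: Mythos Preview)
Your strategy is the paper's: derive the Caccioppoli bound $\int\eta^2|\na w|^2\le C$ from $\De_M w\le\be-|\na w|^2$ (with $w=\log\phi$ minus its average), feed it into the Neumann--Poincar\'e inequality \eqref{NP} to control the oscillation of $\log\phi$, upgrade this to two-sided exponential integrability $\int\phi^{\de}\int\phi^{-\de}\le C$, and close with the subharmonic mean value inequality \eqref{MVIsub} applied to $\phi^{-\de}$. Two points of divergence are worth flagging.

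First, the item you call ``bookkeeping'' --- that $\de_n$ depends only on $n$ --- is the one place where the paper's argument is genuinely sharper than your black-box route. Abstract John--Nirenberg on a doubling space gives $\|w\|_{L^k}\lesssim k\,\|w\|_{\mathrm{BMO}}$ with implicit constant depending on the doubling constant (here $\k$), so the resulting exponent $\de_0\sim 1/\La_0$ depends on $\k$ and $\be\r^2$ through $\La_0$; as $\k\to\infty$ or $\be\r^2\to\infty$ this $\de_0\to0$, and no fixed dimensional $\de_n$ survives. The paper instead iterates the PDE inequality directly on $|w|^q$, using \eqref{Sob} at each step, and obtains $\|w\|_{L^k}\le A_{\k,\be}+B_n\,k$ where the \emph{linear} coefficient $B_n$ depends only on $n$ (the $\k,\be$-dependence sits entirely in the additive constant $A_{\k,\be}$, because the iteration factors are of the form $(\text{stuff}_{\k,\be})^{1/q_j}$ with $\sum 1/q_j<\infty$). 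Summing $\sum(\de_n)^k\|w\|_{L^k}^k/k!$ then converges for $\de_n\sim 1/B_n$, which is purely dimensional. So to match the stated theorem you cannot stop at BMO; you must keep using $\De_M w\le\be-|\na w|^2$ through the iteration.

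Second, the paper first proves the estimate on balls of radius $r\le R/(2\k)$ (so that the $\k$-dilated balls required by \eqref{NP} stay inside $\mathbf{B}_R$) and then reaches arbitrary $\r\le R/2$ by a chain-of-balls Harnack argument along $M\cap\overline{\mathbf{B}_\r}$; this covering step is missing from your outline and is what absorbs the extra factor of $\k$ in the radius into the constant $c_{\k,\be\r^2}$.
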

The proof uses the famous De Giorgi-Nash-Moser iteration (refer \cite{D1}).
For self-containment and figuring out the constant $c_{\k,\be\r^2}$, we shall give the detailed proof of Theorem \ref{MVIsuper} here.  If the reader is quite familiar with it, one can skip the proof.

\begin{proof}
For any $r\in(0,\f{R}{2\k}]$, let
$$w=\log \phi-\f1{\mathcal{H}^{n}\left(M\cap\mathbf{B}_{\f32r}\right)}\int_{M\cap\mathbf{B}_{\f32r}}\log \phi,$$
then $\De_M\phi\le \be\phi$ implies
\begin{equation}\aligned\label{Dew}
\De_M w\le \be-|\na w|^2.
\endaligned
\end{equation}
Let $\e$ be a Lipschitz function with compact support in $M\cap \mathbf{B}_{r}$.
From \eqref{Dew}, for any $q\ge0$ integrating by parts implies
\begin{equation}\aligned
\int \left(|\na w|^2-\be\right)\e^2|w|^q\le&-\int\e^2|w|^q\De_M w=2\int\e|w|^q\na\e\cdot\na w+q\int\e^2|w|^{q-2}w|\na w|^2\\
\le&\f12\int |\na w|^2\e^2|w|^q+2\int|\na\e|^2|w|^q+q\int\e^2|w|^{q-1}|\na w|^2.
\endaligned
\end{equation}
Then
\begin{equation}\aligned\label{ewDwq}
\int \e^2|w|^q|\na w|^2\le2\int\left(2|\na\e|^2+\be\e^2\right)|w|^q+2q\int\e^2|w|^{q-1}|\na w|^2.
\endaligned
\end{equation}
We choose $\e_0=1$ on $\mathbf{B}_{\f32\k r}$, $\e_0=\f {4\k r-2|\mathbf{x}|}{\k r}$ on $\mathbf{B}_{2\k r}\setminus\mathbf{B}_{\f32\k r}$,
$\e_0=0$ outside $\mathbf{B}_{2\k r}$.
Then $|\na\e_0|\le \f2{\k r}$ on $M\cap\mathbf{B}_{2\k r}$.
Choosing $q=0$ in \eqref{ewDwq}, we have
\begin{equation}\aligned\label{|naw|2}
\int_{M\cap\mathbf{B}_{\f32\k r}}|\na w|^2\le2\int\left(2|\na\e_0|^2+\be\e_0^2\right)\le\left(16\k^{-2}r^{-2}+2\be\right)\mathcal{H}^{n}(M\cap\mathbf{B}_{2\k r}).
\endaligned
\end{equation}
Combining the Neumann-Poincar$\mathrm{\acute{e}}$ inequality \eqref{NP} for $w$, we have
\begin{equation}\aligned\label{|w|local}
&\int_{M\cap\mathbf{B}_{\f{3r}{2}}} |w|\le 2c_n\k\f{3r}{2}\int_{M\cap\mathbf{B}_{\f{3}2\k r}} |\na w|\\
\le&3c_n\k r\left(\mathcal{H}^n\left(M\cap\mathbf{B}_{\f{3}2\k r}\right)\right)^{\f12}\left(\int_{M\cap\mathbf{B}_{\f{3}2\k r}} |\na w|^2\right)^{\f12}\\
\le& 3c_n\k r\left(\mathcal{H}^n\left(M\cap\mathbf{B}_{2\k r}\right)\right)^{\f12}
\left(\left(16\k^{-2}r^{-2}+2\be\right)\mathcal{H}^{n}(M\cap\mathbf{B}_{2\k r})\right)^{\f12}\\
\le&12c_n(1+\be \k^2r^2)^{\f12}\mathcal{H}^n\left(M\cap\mathbf{B}_{2\k r}\right).
\endaligned
\end{equation}
With the definition of $\k$ in \eqref{DEFk}, we have
\begin{equation}\aligned\label{Br|w|}
\int_{M\cap\mathbf{B}_{\f32r}} |w|\le& c_{n}^*\omega_n(1+\be^{\f12} r)\k^{n+2}r^n
\endaligned
\end{equation}
for some constant $c_{n}^*$ depending only on $n$.

Denote $\bar{\be}=1+\be^{\f12}r$ for convenience.
Let $r_j=(1+2^{-j-1})r$ for each integer $j\ge0$.
Let $\e_j$ be the cut-off function on $\R^n\times\R^n$ such that $\e_j=1$ on $\mathbf{B}_{r_{j+1}}$, $\e_j=\f {r_{j}-|\mathbf{x}|}{r_{j}-r_{j+1}}$ on $\mathbf{B}_{r_{j}}\setminus \mathbf{B}_{r_{j+1}}$,
$\e_j=0$ outside $\mathbf{B}_{r_j}$.
Then $|\na\e_j|\le 2^{j+2}/r$.
From \eqref{ewDwq}, for any number $q\ge1$ and any integer $j\ge0$ we have
\begin{equation}\aligned\label{Brj-1e2wqnew2}
\int_{M\cap\mathbf{B}_{r_j}}\e_j^2|w|^q|\na w|^2\le 2^{2j+6}\f{\bar{\be}^2}{r^2}\int_{M\cap\mathbf{B}_{r_j}}|w|^q+2q\int_{M\cap\mathbf{B}_{r_j}}\e_j^2|w|^{q-1}|\na w|^2.
\endaligned
\end{equation}
Recall Young's inequality:
\begin{equation}\aligned\label{Young}
2q|w|^{q-1}\le\f12|w|^q+2^{2q-1}(q-1)^{q-1}\ \quad \mathrm{for}\ \ q\ge1,\\
\endaligned
\end{equation}
where we denote $0^0=1$ for the case $q=1$.
Combining \eqref{Brj-1e2wqnew2} with $q=0$, we get
\begin{equation}\aligned
\f12\int_{M\cap\mathbf{B}_{r_j}}\e_j^2|w|^q|\na w|^2\le& 2^{2j+6}\f{\bar{\be}^2}{r^2}\int_{M\cap\mathbf{B}_{r_j}}|w|^q+2^{2q-1}(q-1)^{q-1}\int_{M\cap\mathbf{B}_{r_j}}\e_j^2|\na w|^2\\
\le&2^{2j+6}\f{\bar{\be}^2}{r^2}\int_{M\cap\mathbf{B}_{r_j}}|w|^q+2^{2q+2j+5}q^{q-1}\f{\bar{\be}^2}{r^2}\mathcal{H}^{n}(M\cap\mathbf{B}_{r_j}).\\
\endaligned
\end{equation}
Combining Cauchy inequality and \eqref{Young}, for $q\ge1$ and $j\ge0$ we have
\begin{equation}\aligned\label{Brjwqnaw}
&\int_{M\cap\mathbf{B}_{r_{j}}}\e_j^2|w|^{q}|\na w|\le \f{r}{2^{j+5}\bar{\be}}\int_{M\cap\mathbf{B}_{r_{j}}}\e_j^2|w|^{q}|\na w|^2+\f{2^{j+3}\bar{\be}}{r}\int_{M\cap\mathbf{B}_{r_{j}}}\e_j^2|w|^{q}\\
\le&2^{j+2}\f{\bar{\be}}{r}\int_{M\cap\mathbf{B}_{r_{j}}}|w|^{q}+2^{2q+j+1}q^{q-1}\f{\bar{\be}}{r}\mathcal{H}^{n}(M\cap\mathbf{B}_{r_j})
+\f{2^{j+3}\bar{\be}}{r}\int_{M\cap\mathbf{B}_{r_{j}}}|w|^{q}\\
\le&2^{j+4}\f{\bar{\be}}{r}\int_{M\cap\mathbf{B}_{r_{j}}}\f{|w|^{q+1}+2^{2q+2}q^q}{4(q+1)}
+2^{2q+j+1}q^{q-1}\f{\bar{\be}}{r}\mathcal{H}^{n}(M\cap\mathbf{B}_{r_j})\\
\le&\f{2^{j+2}\bar{\be}}{(q+1)r}\int_{M\cap\mathbf{B}_{r_{j}}}|w|^{q+1}+2^{2q+j+5}q^{q-1}\f{\bar{\be}}{r}\mathcal{H}^{n}(M\cap\mathbf{B}_{r_j}).
\endaligned
\end{equation}
Moreover, for $j\ge0$ and $0\le q<1$, combining \eqref{Brj-1e2wqnew2} with $q=0$ and Young inequality
\begin{equation}\aligned\label{Brjwqnaw*}
&\int_{M\cap\mathbf{B}_{r_{j+1}}}|w|^q|\na w|\le \f{2^{j+1}\bar{\be}}{r}\int_{M\cap\mathbf{B}_{r_{j+1}}}|w|^{2q}+\f{r}{2^{j+3}\bar{\be}}\int_{M\cap\mathbf{B}_{r_{j}}}\e_j^2|\na w|^2\\
\le&\f{2^{j+1}\bar{\be}}{r}\int_{M\cap\mathbf{B}_{r_{j+1}}}\left(\f{2q}{q+1}|w|^{q+1}+\f{1-q}{q+1}\right)+2^{j+3}\f{\bar{\be}}{r}\mathcal{H}^{n}(M\cap\mathbf{B}_{r_j})\\
\le&\f{2^{j+2}\bar{\be}}{(q+1)r}\int_{M\cap\mathbf{B}_{r_{j}}}|w|^{q+1}+2^{j+4}\f{\bar{\be}}{r}\mathcal{H}^n\left(M\cap\mathbf{B}_{r_j}\right).
\endaligned
\end{equation}
Combining \eqref{Brjwqnaw} and \eqref{Brjwqnaw*}, we get
\begin{equation}\aligned\label{Brjwqnaw**}
\int_{M\cap\mathbf{B}_{r_{j+1}}}|w|^{q}|\na w|\le\f{2^{j+2}\bar{\be}}{(q+1)r}\int_{M\cap\mathbf{B}_{r_{j}}}|w|^{q+1}+2^{2q+j+5}q^{q-1}\f{\bar{\be}}{r}\mathcal{H}^{n}(M\cap\mathbf{B}_{r_j})
\endaligned
\end{equation}
for $q\ge0$ and $j\ge0$.
Combining Sobolev inequality \eqref{Sob} and \eqref{Brjwqnaw**}, for $j\ge0$ and $q\ge0$, we have
\begin{equation}\aligned
&\bigg(\int_{M\cap\mathbf{B}_{r_{j+2}}}|w|^{\f {(q+1)n}{n-1}}\bigg)^{\f{n-1}n}\le\left(\int_M\left(|w|^{q+1}\e_{j+1}\right)^{\f {n}{n-1}}\right)^{\f{n-1}n}\le c_n\int_M\left|\na(w^{q+1}\e_{j+1})\right|\\
&\le c_n \left((q+1)\int_{M\cap\mathbf{B}_{r_{j+1}}}|w|^{q}|\na w|+\f{2^{j+3}}{r}\int_{M\cap\mathbf{B}_{r_{j+1}}}|w|^{q+1}\right)\\
&\le \f{2^{j+2}c_n\bar{\be}}r\left(\int_{M\cap\mathbf{B}_{r_j}}|w|^{q+1}+2^{2q+3}(q+1)^{q}\mathcal{H}^n\left(M\cap\mathbf{B}_{r_j}\right)
+2\int_{M\cap\mathbf{B}_{r_{j+1}}}|w|^{q+1}\right)\\
&\le\f{2^{j+4}c_n\bar{\be}}r\left(\int_{M\cap\mathbf{B}_{r_j}}|w|^{q+1}+2^{2q+1}(q+1)^{q}\mathcal{H}^n\left(M\cap\mathbf{B}_{r_j}\right)\right).
\endaligned
\end{equation}
For any $f\in L^p(M\cap\mathbf{B}_{r_j})$ with $p>0$, $j\ge0$, we define $||f||_{p,r_j}=\left(\f1{\k\omega_nr_0^n}\int_{M\cap\mathbf{B}_{r_j}}|f|^p\right)^{\f1p}$.
Note that $\mathcal{H}^n\left(M\cap\mathbf{B}_{r_j}\right)\le\k\omega_nr_j^n\le\k\omega_nr_0^n$ by the definition of $\k$ in \eqref{DEFk}.
Then
\begin{equation}\aligned
||w||^q_{\f {nq}{n-1},r_{j+2}}\le 2^{j+5}c_n\bar{\be}(\k\omega_n)^{\f1n}\left(||w||^q_{q,r_{j}}+2^{2q}q^q\right)
\endaligned
\end{equation}
for any $j\ge0$, $q\ge1$, which implies
\begin{equation}\aligned\label{waqq-1r}
||w||_{\f{nq}{n-1} ,r_{j+2}}\le \left(2^{j+5}c_n\bar{\be}(\k\omega_n)^{\f1n}\right)^{\f1q}\left(||w||_{q,r_{j}}+4q\right).
\endaligned
\end{equation}
Let $q_j=\left(\f n{n-1}\right)^j$ and $a_j=||w||_{q_j,r_{2j}}/q_j$ for $j\ge0$. Then from \eqref{waqq-1r} we have
\begin{equation}\aligned
||w||_{q_{j+1} ,r_{2j+2}}\le (c_n\bar{\be}\omega_n^{1/n}\k^{1/n})^{\f1{q_j}}2^{\f {2j+5}{q_j}}\left(||w||_{q_j,r_{2j}}+4q_j\right),
\endaligned
\end{equation}
and
\begin{equation}\aligned\label{aj+1aj}
a_{j+1}\le \f{n-1}{n}(c_n\bar{\be}\omega_n^{1/n}\k^{1/n})^{\f1{q_j}}2^{\f {2j+5}{q_j}}(a_j+4)
\endaligned
\end{equation}
for every $j\ge0$.
Put $b_j=\f{n-1}{n}(c_n\bar{\be}\omega_n^{1/n}\k^{1/n})^{\f1{q_j}}2^{\f {2j+5}{q_j}}$. Then for each $j\ge0$ we have
\begin{equation}\aligned
a_{j+1}\le a_0\prod_{i=0}^jb_i+4\sum_{i=0}^j\prod_{k=i}^jb_k.
\endaligned
\end{equation}
There is a positive constant $b_*$ depending only on $n$ such that for all $j\ge i\ge0$
\begin{equation}\aligned
\prod_{k=i}^jb_k\le b_*\left(\f{n-1}{n}\right)^{j-i+1}\bar{\be}^{\f n{q_i}}\k^{\f1{q_i}}.
\endaligned
\end{equation}
Hence for each $j\ge1$
\begin{equation}\aligned
a_{j+1}\le \left(\f{n-1}{n}\right)^{j+1}b_*\bar{\be}^n\k a_0+4b_*\sum_{i=0}^{j}\left(\f{n-1}{n}\right)^{j-i+1}\bar{\be}^{\f n{q_i}}\k^{\f1{q_i}}\\
\endaligned
\end{equation}
and then
\begin{equation}\aligned
&||w||_{q_j,r_{2j}}\le b_*\bar{\be}^n\k a_0+4b_*\sum_{i=0}^{j-1}\left(\f{n}{n-1}\right)^{i}\bar{\be}^{\f n{q_i}}\k^{\f1{q_i}}.
\endaligned
\end{equation}
Denote $\bar{\k}=\bar{\be}^n\k$, and $i_*=\left[\f{\log(1+\log\bar{\k})}{\log n/(n-1)}\right]$. Then
\begin{equation}\aligned
||w||_{q_j,r_{2j}}\le &b_*\bar{\k} a_0+4b_*\sum_{i=0}^{i_*}\left(\f{n}{n-1}\right)^{i}\bar{\k}+4b_*\sum_{i=i_*+1}^{j-1}\left(\f{n}{n-1}\right)^{i}\bar{\k}^{\left(\f{n-1}n\right)^{\f{\log(1+\log\bar{\k})}{\log n/(n-1)}}}\\
\le& b_*\bar{\k} a_0+4(n-1)b_*\left(\f{n}{n-1}\right)^{i_*+1}\bar{\k}+4(n-1)b_*\left(\f{n}{n-1}\right)^{j}\bar{\k}^{\f{1}{1+\log \bar{\k}}}\\
\le& b_*\bar{\k} a_0+4nb_*(1+\log\bar{\k})\bar{\k}+4(n-1)b_*q_je^{\f{\log\bar{\k}}{1+\log \bar{\k}}}.
\endaligned
\end{equation}
For each integer $k\ge 1$, there is an integer $j_k\ge0$ such that $q_{j_k}\le k\le q_{j_k+1}$. Note that $\mathcal{H}^n(M\cap \mathbf{B}_{r_j})\le\k\omega_nr_0^n$ by the definition of $\k$ in \eqref{DEFk}. With H$\mathrm{\ddot{o}}$lder inequality, we have
\begin{equation}\aligned
||w||_{k,r}\le||w||_{k,r_{2j_k+2}}&\le ||w||_{q_{j_k+1},r_{2j_k+2}}\le b_*\bar{\k} a_0+4nb_*(1+\log\bar{\k})\bar{\k}+4e(n-1)b_*q_{j_k+1}\\
\le& b_*\bar{\k} a_0+4nb_*(1+\log\bar{\k})\bar{\k}+4enb_*k.
\endaligned
\end{equation}
Note that
$$a_0=||w||_{1,r_0}\le\f{c_n^*\bar{\be}\k^{n+2}\omega_nr^n}{\k\omega_nr_0^n}\le c_n^*\bar{\be}\k^{n+1}$$
from \eqref{Br|w|}. Then there is a constant $\de_n\in(0,1]$ depending only on $n$ such that
\begin{equation}\aligned
||w||_{k,r}\le \f1{2\de_n }\left(\bar{\k}^2\k^{n}+\f{k}{2e}\right)
\endaligned
\end{equation}
for all integers $k\ge1$.
Therefore, combining Stirling's formula
\begin{equation}\aligned
||w||_{k,r}^{k}\le \de_n^{-k}\left(\bar{\k}^{2k}\k^{nk}+(2e)^{-k}k^{k}\right)\le \de_n^{-k}\left(\bar{\k}^{2k}\k^{nk}+2^{-k}k^{-\f12}k!\right),
\endaligned
\end{equation}
which implies
\begin{equation}\aligned
\sum_{k=1}^\infty\f{1}{k!}\left|\left|\de_nw\right|\right|_{k,r}^{k}\le\sum_{k=1}^\infty \f{\bar{\k}^{2k}\k^{nk}}{k!}+\sum_{k=1}^\infty 2^{-k}k^{-\f12}\le e^{\bar{\k}^2\k^{n}}.
\endaligned
\end{equation}
Hence
\begin{equation}\aligned
\f1{(\bar{\k}\omega_nr_0^n)^2}\int_{M\cap\mathbf{B}_{r}} e^{\de_nw}\int_{M\cap\mathbf{B}_{r}} e^{-\de_nw}\le \left(e^{\bar{\k}^2\k^{n}}+1\right)^2.
\endaligned
\end{equation}
Namely,
\begin{equation}\aligned\label{phic*-c*}
\int_{M\cap\mathbf{B}_{r}} \phi^{\de_n}\int_{M\cap\mathbf{B}_{r}} \phi^{-\de_n}\le (\bar{\k}\omega_nr_0^n)^2\left(e^{\bar{\k}^2\k^{n}}+1\right)^2\le 2(\bar{\k}\omega_nr_0^n)^2e^{2\bar{\k}^2\k^{n}}.
\endaligned
\end{equation}
From $\De_M\phi\le \be\phi$ and $\phi>0$, we have 
$$\De_M\phi^{-\de_n}=-\de_n\phi^{-1-\de_n}\De_M\phi+\de_n(1+\de_n)\phi^{-2-\de_n}|\na\phi|^2\ge-\de_n\be\phi^{-\de_n}.$$
Then with \eqref{MVIsub} we have
\begin{equation}\aligned\label{Mphic*}
\phi^{-\de_n}(\mathbf{0})\le\f{e^{\f12\de_n\be r^2}}{\omega_n r^n}\int_{M\cap\mathbf{B}_{r}}\phi^{-\de_n}.
\endaligned
\end{equation}
Note $r_0=\f32r$ and $r\in(0,\f{R}{2\k}]$. Then combining \eqref{phic*-c*}\eqref{Mphic*} gets
\begin{equation}\aligned\label{ula14r}
\int_{M\cap\mathbf{B}_r} \phi^{\de_n}\le 2\left(\f32\right)^{2n}\omega_n\bar{\k}^2e^{2\bar{\k}^2\k^{n}}e^{\f12\de_n\be r^2}r^n\phi^{\de_n}(\mathbf{0})\quad \mathrm{for\ any}\ r\in(0,\f{R}{2\k}\big],
\endaligned
\end{equation}
where $\bar{\k}=\left(1+\be^{\f12}r\right)^n\k$.

Now we fix a constant $\r\in(0,R/2]$ and $r=\f{\r}{2\k}$. From \eqref{ula14r}, we conclude that there is a constant $\th_{\k,\be\r^2}\ge1$ depending only on $n,\k,\be\r^2$ so that
\begin{equation}\aligned\label{MBrphideth}
\int_{M\cap\mathbf{B}_{r}(\mathbf{x})} \phi^{\de_n}\le \th_{\k,\be \r^2}r^n\phi^{\de_n}(\mathbf{x})
\endaligned
\end{equation}
for any $\mathbf{x}\in M\cap\overline{\mathbf{B}_\r}$. Hence, there is a constant $\th'_{\k,\be\r^2}\ge\th_{\k,\be\r^2}$ depending only on $n,\k,\be\r^2$ such that 
\begin{equation}\aligned\label{MBrphideth'}
\inf_{M\cap\mathbf{B}_{r/4}(\mathbf{y})} \phi^{\de_n}\le \th'_{\k,\be \r^2}\phi^{\de_n}(\mathbf{x})
\endaligned
\end{equation}
for any $\mathbf{x}\in M\cap\overline{\mathbf{B}_\r}$ and any $\mathbf{y}\in M\cap\overline{\mathbf{B}_{3r/4}(\mathbf{x})}$. 
Denote $\mathbf{y}_0=0$.
By induction, there are an integer $n_\k\ge1$ depending only on $n,\k$ and a collection of points $\mathbf{y}_1,\cdots,\mathbf{y}_{n_\k}\in M\cap\overline{\mathbf{B}_{\r-r/4}}$ with $\inf_{0\le j\le i-1}|\mathbf{y}_i-\mathbf{y}_j|<\f r2$ for each $i=1,\cdots,n_\k$ such that
\begin{equation}\aligned
M\cap\overline{\mathbf{B}_\r}\subset\bigcup_{j=1}^{n_\k}\mathbf{B}_{\f34r}(\mathbf{y}_j).
\endaligned
\end{equation}
Denote $\mathbf{z}_0=0$.
By induction, from \eqref{MBrphideth'} we choose a sequence of points $\mathbf{z}_i\in\overline{\mathbf{B}_{r/4}(\mathbf{y}_i)}\subset M\cap\overline{\mathbf{B}_{\r}}$ such that
\begin{equation}\aligned\label{MBrphideth'z}
\phi^{\de_n}(\mathbf{z}_i)\le \inf_{0\le j\le i-1}\th'_{\k,\be \r^2}\phi^{\de_n}(\mathbf{z}_j)
\endaligned
\end{equation}
for each $i=1,\cdots,n_\k$.
So we get
$M\cap\overline{\mathbf{B}_\r}\subset\cup_{j=1}^{n_\k}\mathbf{B}_{r}(\mathbf{z}_j)$ and $\phi^{\de_n}(\mathbf{z}_i)\le (\th'_{\k,\be \r^2})^i\phi^{\de_n}(\mathbf{0})$ for each $i=1,\cdots,n_\k$. Hence, with \eqref{MBrphideth} we deduce
\begin{equation}\aligned
\int_{M\cap\mathbf{B}_{\r}} \phi^{\de_n}\le\sum_{j=1}^{m_\k}\int_{M\cap\mathbf{B}_{r}(\mathbf{z}_j)}\phi^{\de_n}\le\th_{\k,\be \r^2}r^n\sum_{j=1}^{m_\k}\phi^{\de_n}(\mathbf{z}_j)\le\th_{\k,\be \r^2}r^n\sum_{j=1}^{m_\k}(\th'_{\k,\be \r^2})^j\phi^{\de_n}(\mathbf{0}).
\endaligned
\end{equation}
Namely, there is a constant $c_{\k,\be \r^2}$ depending only on $n,\k,\be\r^2$ such that
\begin{equation}\aligned
\int_{M\cap\mathbf{B}_{\r}} \phi^{\de_n}\le c_{\k,\be \r^2}\r^n\phi^{\de_n}(\mathbf{0}).
\endaligned
\end{equation}
This completes the proof of Theorem \ref{MVIsuper}.
\end{proof}

\section{A Liouville type theorem for special Lagrangian equations}

Let $u$ be a smooth solution to the special Lagrangian equation \eqref{SL} with the phase $\Th$ on $B_R\subset\R^n$.
Without loss of generality, we assume the constant $\Th\ge0$.
Let $M=\{(x,Du(x))\in\R^n\times\R^n|\ x\in B_R\}$. Denote $g_{ij}=\de_{ij}+\sum_ku_{ik}u_{jk}$, and $v=\sqrt{\det g_{ij}}$.
We usually see $v$ as a function on $M$ by identifying $v(x,Du(x))=v(x)$, which
will not cause confusion from the context in general.
Let $\la_1,\cdots,\la_n$ be the eigenvalues of the Hessian $D^2u$ on $B_R$.
Let $\De_M$ denote the Laplacian of $M$, and $\na_{M}$ denote the Levi-Civita connection of $M$.
Let $\p_{ij}u$ denote the derivative of $u$ with respect to $x_i,x_j$, and $\p_{ijk}u$ denote the derivative of $u$ with respect to $x_i,x_j,x_k$.
At any considered point $p$, we assume that $D^2u$ is diagonalized, then
\begin{equation}\aligned\label{Delogv}
\De_M\log v=\sum_{i,j,k}(1+\la_i\la_j)h_{ijk}^2
\endaligned
\end{equation}
at $p$, where $h_{ijk}=\f1{\sqrt{(1+\la_i^2)(1+\la_j^2)(1+\la_k^2)}}\p_{ijk}u$ (see \cite{WmY2} for instance).
Let $\bn$ be Levi-Civita connection of $\R^n\times\R^n$ with respect to its standard metric.
Let $E_1,\cdots,E_{2n}$ be the orthonormal basis of $\R^n\times\R^n$ such that $E_i$ is the dual form of $dx_i$, and $E_{n+i}$ is the dual form of $dy_i$ for each $i=1,\cdots,n$.
Let $e_1,\cdots,e_n$ be a local tangent frame in a neighborhood of $p$ defined by
$$e_i=\f1{\sqrt{1+|Du_i|^2}}(E_i+\p_{ik} u E_{n+k}),$$
and $\nu_1,\cdots,\nu_n$ be a local frame normal to $M$ in a neighborhood of $p$ defined by
$$\nu_j=\f1{\sqrt{1+|Du_j|^2}}(-\p_{jk}uE_k+E_{n+j}).$$
Then at the point $p$,
$$e_i=\f1{\sqrt{1+\la_i^2}}(E_i+\la_iE_{n+i}),\qquad \nu_j=\f1{\sqrt{1+\la_j^2}}(-\la_j E_j+E_{n+j}),$$
and they make up an orthonormal basis of $\R^n\times\R^n$.
Let $B_M$ denote the second fundamental form on $M$, 
then at $p$ we have
\begin{equation}\aligned
\lan B_M(e_i,e_j),\nu_k\ran=\lan \bn_{e_i}e_j,\nu_k\ran=\f1{\sqrt{(1+\la_i^2)(1+\la_j^2)(1+\la_k^2)}}\p_{ijk}u=h_{ijk}.
\endaligned
\end{equation}
Let $|B_M|^2$ denote the square norm of $B_M$, i.e.,
\begin{equation}\aligned
|B_M|^2=\sum_{i,j=1}^n|B_M(e_i,e_j)|^2=\sum_{i,j,k=1}^n|\lan B_M(e_i,e_j),\nu_k\ran|^2=\sum_{i,j,k=1}^nh_{ijk}^2.
\endaligned
\end{equation}
From \eqref{Delogv}, we have
\begin{equation}\aligned
\De_{M} v^{-\f1n}=&\De_{M} e^{-\f1n\log v}=-\f1n v^{-\f1n}\De_{M}\log v+\f1{n^2}v^{-\f1n}|\na_M\log v|^2\\
=&-\f1n v^{-\f1n}\left(\sum_{i,j,k}h^2_{ijk}+\sum_{k,i\neq j}\la_i\la_jh^2_{ijk}+\sum_{i,k}\la_i^2h^2_{iik}-\f1n\sum_{i,j,k}\la_i\la_jh_{iik}h_{jjk}\right).
\endaligned
\end{equation}
Suppose there is a constant $K\ge1$ such that $\la_i\la_j+K\ge0$ for all $i,j$.
Combining Cauchy inequality, we have
\begin{equation}\aligned\label{v-1nA}
\De_{M} v^{-\f1n}\le&-\f1n v^{-\f1n}\left(\sum_{i,j,k}h^2_{ijk}-\sum_{k,i\neq j}Kh^2_{ijk}\right)\le \f{K-1}n v^{-\f1n}\sum_{i,j,k}h^2_{ijk}=\f{K-1}n v^{-\f1n}|B_M|^2.
\endaligned
\end{equation}

\begin{lemma}\label{mulowerbd}
Assume that $\mu_1,\cdots,\mu_n$ are constants with $\sum_i\arctan\mu_i\ge0$. If $K$ is a positive constant $\ge1$ such that $\mu_i\mu_j+K\ge0$ for all $i,j$, then $\mu_i\ge-\La_K$, where $\La_K$ is the unique solution to
\begin{equation}\aligned\label{arctant}
\arctan t=(n-1)\arctan\f Kt\qquad on \ \ (0,\infty).
\endaligned
\end{equation}
In particular, $\La_K< 2nK/\pi$.
\end{lemma}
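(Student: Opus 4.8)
The plan is to reduce the lemma to an elementary one-variable analysis of the function
\[
f(t)=\arctan t-(n-1)\arctan\f Kt,\qquad t\in(0,\infty)
\]
(here $n\ge2$; for $n=1$ the equation is trivial). First I would record the basic properties of $f$. It is strictly increasing on $(0,\infty)$: $\arctan t$ increases with $t$, while $K/t$ decreases, so $-(n-1)\arctan(K/t)$ also increases. Moreover $\lim_{t\to0^+}f(t)=-(n-1)\f\pi2<0$ and $\lim_{t\to\infty}f(t)=\f\pi2>0$. Hence $f$ has exactly one zero on $(0,\infty)$, which is precisely the number $\La_K$ determined by \eqref{arctant}; in particular $f(t)<0$ for $0<t<\La_K$ and $f(t)>0$ for $t>\La_K$.

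For the main assertion I would argue by contradiction. Suppose some $\mu_i<-\La_K$; after relabeling assume $i=1$ and set $t=-\mu_1$, so that $t>\La_K$. For each $j\ne1$ the hypothesis $\mu_1\mu_j+K\ge0$ reads $-t\,\mu_j+K\ge0$, i.e. $\mu_j\le K/t$, and since $\arctan$ is increasing, $\arctan\mu_j\le\arctan(K/t)$. Adding these $n-1$ inequalities to the identity $\arctan\mu_1=-\arctan t$ and using $\sum_k\arctan\mu_k\ge0$,
\[
0\le\sum_{k=1}^n\arctan\mu_k\le-\arctan t+(n-1)\arctan\f Kt=-f(t),
\]
so $f(t)\le0$; this contradicts $f(t)>0$, which holds because $t>\La_K$. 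Therefore $\mu_i\ge-\La_K$ for every $i$.

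It remains to prove $\La_K<2nK/\pi$. By the monotonicity of $f$ together with $f(\La_K)=0$, it suffices to check $f\big(\f{2nK}\pi\big)>0$, i.e.
\[
\arctan\f{2nK}\pi>(n-1)\arctan\f{\pi}{2n}.
\]
On the right, $\arctan s\le s$ for $s\ge0$ gives $(n-1)\arctan\f{\pi}{2n}\le\f{(n-1)\pi}{2n}=\f\pi2-\f\pi{2n}$. On the left, $\tan\th>\th$ for $\th\in(0,\f\pi2)$ yields $\cot\f\pi{2n}<\f{2n}\pi\le\f{2nK}\pi$, and applying the increasing function $\arctan$,
\[
\arctan\f{2nK}\pi>\arctan\Big(\cot\f\pi{2n}\Big)=\f\pi2-\f\pi{2n}.
\]
Combining the two estimates gives the displayed strict inequality, hence $\La_K<2nK/\pi$.

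The computations above are routine; the two points that require care are keeping the correct sign in the chain of inequalities in the second step (the bound produces $-f(t)$, not $f(t)$), and invoking the elementary inequality $\tan\th>\th$ to pin down the explicit constant in the last step.
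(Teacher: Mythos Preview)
Your proof is correct and follows essentially the same approach as the paper's: both reduce to the monotone one-variable function $f(t)=\arctan t-(n-1)\arctan(K/t)$, bound the other $\mu_j$ by $K/t$ via the constraint, and deduce $f(-\mu_{\min})\le0$; the explicit bound $\La_K<2nK/\pi$ is obtained in both cases from $\tan\theta>\theta$ at $\theta=\pi/(2n)$ together with the identity $\arctan x+\arctan(1/x)=\pi/2$. The only cosmetic differences are that the paper orders the $\mu_i$ instead of arguing by contradiction, and writes the final estimate as $\f\pi2-n\arctan\f\pi{2n}>0$ rather than comparing both sides to $\f\pi2-\f\pi{2n}$.
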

\begin{proof}
Without loss of generality, we assume $\mu_1\ge\cdots\ge\mu_n$ and $\mu_n<0$, or else we have complete the proof.
From $\mu_1\mu_n+K\ge0$, we get $\mu_1\le-K/\mu_n$. Then
\begin{equation}\aligned
0\le&\sum_i\arctan\mu_i\le(n-1)\arctan\mu_1+\arctan\mu_n\\
\le&(n-1)\arctan\left(-\f K{\mu_n}\right)+\arctan\mu_n=\arctan\mu_n-(n-1)\arctan\left(\f K{\mu_n}\right).
\endaligned
\end{equation}
Since $\arctan t-(n-1)\arctan\left(\f Kt\right)$ is monotonic increasing on $(0,\infty)$, then the above inequality implies $\mu_i\ge\La_K$ for all $i=1,\cdots,n$, where $\La_K$ is the unique solution to \eqref{arctant}.

Now let us estimate the upper bound of $\La_K$. Since $\tan\left(\f{\pi}{2n}\right)>\f{\pi}{2n}$, then
\begin{equation}\aligned
\f{\pi}2-n\arctan\left(\f{\pi}{2n}\right)>0.
\endaligned
\end{equation}
Hence for $K\ge1$ we have
\begin{equation}\aligned
\arctan\left(\f{2nK}{\pi}\right)-(n-1)\arctan\left(\f{\pi}{2n}\right)=&\f{\pi}2-\arctan\left(\f{\pi}{2nK}\right)-(n-1)\arctan\left(\f{\pi}{2n}\right)\\
>&\f{\pi}2-n\arctan\left(\f{\pi}{2n}\right)>0,
\endaligned
\end{equation}
which implies $\La_K< 2nK/\pi$.
\end{proof}

Let
$$M_\La=F_\La(M)=\{F_\La(x,y)\in\R^n\times\R^n|\, (x,y)\in M\},$$
which is a rigid motion of $M$ with $F_\La$ defined in \eqref{labxby}.
Let $\De_{M_\La}$ denote the Laplacian of $M_\La$, and $\na_{M_\La}$ denote the Levi-Civita connection of $M_\La$.
\begin{lemma}\label{ISOtrans}
Let $\phi$ be a smooth function on $M_\La$, then $\phi_\La\triangleq\phi\circ F_\La$ satisfies
\begin{equation}\aligned
\De_M\phi_\La(\mathbf{x})=\De_{M_\La}\phi(F_\La(\mathbf{x})),\qquad |\na_{M}\phi_\La|(\mathbf{x})=|\na_{M_\La}\phi|(F_\La(\mathbf{x}))
\endaligned
\end{equation}
for any $\mathbf{x}\in M$.
\end{lemma}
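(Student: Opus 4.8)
The plan is to recognize Lemma~\ref{ISOtrans} as the elementary fact that the Laplace--Beltrami operator and the norm of the gradient of a function are preserved under Riemannian isometries, applied here to the isometry $F_\La$ restricted to $M$.

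First I would record that $F_\La$ is a linear isometry of $\R^n\times\R^n$ onto itself (as noted right after \eqref{labxby}), hence a global diffeomorphism, and that $M_\La=F_\La(M)$ by definition; consequently $F_\La|_M\colon M\to M_\La$ is a diffeomorphism. Since $M$ carries the metric induced from the Euclidean metric on $\R^n\times\R^n$, and likewise for $M_\La$, and since $F_\La$ preserves that ambient metric, the differential $dF_\La$ restricts to a linear isometry $T_{\mathbf x}M\to T_{F_\La(\mathbf x)}M_\La$ for every $\mathbf x\in M$. In other words $F_\La|_M$ is a Riemannian isometry between $(M,g_M)$ and $(M_\La,g_{M_\La})$, where $g_M$, $g_{M_\La}$ are the induced metrics.

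Given this, both identities follow from the naturality of the metric-dependent operators. For the gradient: fix $\mathbf x\in M$; for every tangent vector $X\in T_{\mathbf x}M$ one has $g_M(\na_M\phi_\La,X)=X(\phi_\La)$, and by the chain rule $X(\phi\circ F_\La)=\big(dF_\La(X)\big)(\phi)=g_{M_\La}\big(\na_{M_\La}\phi,\,dF_\La(X)\big)$ at $F_\La(\mathbf x)$; since $dF_\La$ is a linear isometry, this last quantity equals $g_M\big((dF_\La)^{-1}\na_{M_\La}\phi,\,X\big)$. As $X$ is arbitrary, $\na_M\phi_\La(\mathbf x)=(dF_\La)^{-1}\na_{M_\La}\phi(F_\La(\mathbf x))$, and taking norms and using once more that $dF_\La$ is an isometry gives $|\na_M\phi_\La|(\mathbf x)=|\na_{M_\La}\phi|(F_\La(\mathbf x))$. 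For the Laplacian one can either repeat the argument with the Hessian and take traces, using that the Levi--Civita connection is natural under isometries, or argue in coordinates: with $x$ on $M$ and $\bar x$ on $M_\La$ (so that $F_\La|_M$ reads $x\mapsto \bar x(x,Du(x))$), the metric coefficients satisfy $g^M_{ij}(x)=\sum_{k,l}\f{\p\bar{x}_k}{\p x_i}\f{\p\bar{x}_l}{\p x_j}\,g^{M_\La}_{kl}(\bar x)$, and substituting this into $\Delta=\frac1{\sqrt{\det g}}\,\partial_i\big(\sqrt{\det g}\,g^{ij}\partial_j\big)$ together with $\partial_j\phi_\La=\sum_k\f{\p\bar{x}_k}{\p x_j}(\partial_{\bar x_k}\phi)$ yields $\De_M\phi_\La(\mathbf x)=\De_{M_\La}\phi(F_\La(\mathbf x))$ after the evident cancellations.

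I do not anticipate any real obstacle here. The only point that genuinely needs a sentence is that $F_\La|_M$ is a diffeomorphism onto $M_\La$, which is immediate because $F_\La$ is a global diffeomorphism of $\R^n\times\R^n$ (and was, in effect, already used earlier when $F_\La(M)$ was identified with a graph over $\bar x(\Om)$). Everything else is the functoriality of $\na$, $\De$, and $|\na\cdot|$ under isometries, so the proof is only a few lines; I would include the coordinate computation, if at all, merely as a remark for readers who prefer an explicit verification.
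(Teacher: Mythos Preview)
Your proposal is correct and rests on the same observation as the paper: since $F_\La$ is a linear isometry of $\R^n\times\R^n$, its restriction $F_\La|_M\colon M\to M_\La$ is a Riemannian isometry for the induced metrics, and hence pulls back the gradient and the Laplacian. The only notable difference is in how the Laplacian identity is verified: you invoke naturality of the Levi--Civita connection (or the coordinate formula for $\De$), whereas the paper first proves the gradient identity in orthonormal coordinates and then obtains the Laplacian identity weakly, by pairing against an arbitrary $\varphi\in C^\infty_c(M_\La)$ and integrating by parts on both $M$ and $M_\La$. Both routes are equally short; the paper's integration-by-parts argument has the minor advantage of avoiding any tracking of second derivatives of the coordinate change, while your naturality argument is the more standard textbook phrasing.
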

\begin{proof}
Let $(z_1,\cdots,z_n)$ be a local coordinate chart in a neighborhood of the considered point $\mathbf{x}$ in $M$, such that $\p_{z_i}$ forms an orthonormal basis at $\mathbf{x}$.
Let $(w_1,\cdots,w_n)=F_\La(z_1,\cdots,z_n)$, then $(w_1,\cdots,w_n)$ is a local coordinate chart in a neighborhood of $F_\La(\mathbf{x})$ and
$\p_{w_i}=\sum_j\f{\p w_i}{\p z_j}\p_{z_j}$.
Since $F_\La$ is an isometric mapping, then at $F_\La(\mathbf{x})$
\begin{equation}\aligned
\lan\p_{w_i},\p_{w_j}\ran=\left\lan\sum_k\f{\p w_i}{\p z_k}\p_{z_k},\sum_l\f{\p w_j}{\p z_l}\p_{z_l}\right\ran=\sum_k\f{\p w_i}{\p z_k}\f{\p w_j}{\p z_k}=\de_{ij}.
\endaligned
\end{equation}
Recall that $\bn$ denotes Levi-Civita connection of $\R^n\times\R^n$ with respect to its standard metric.
For any function $\varphi\in C^\infty_c(M_\La)$,
\begin{equation}\aligned
&\left\lan\na_{M}\phi_\La,\na_{M}(\varphi\circ F_\La)\right\ran\big|_{\mathbf{x}}=\sum_i\bn_{z_i}\phi_\La\big|_{\mathbf{x}}\bn_{z_i}(\varphi\circ F_\La)\big|_{\mathbf{x}}\\
=&\sum_i\sum_j\f{\p w_j}{\p z_i}\bn_{w_j}\phi\big|_{F_\La(\mathbf{x})}\sum_k\f{\p w_k}{\p z_i}\bn_{w_k}\varphi\big|_{F_\La(\mathbf{x})}\\
=&\sum_j\bn_{w_j}\phi\big|_{F_\La(\mathbf{x})}\bn_{w_j}\varphi\big|_{F_\La(\mathbf{x})}
=\left\lan\na_{M_\La}\phi,\na_{M_\La}\varphi\right\ran\big|_{F_\La(\mathbf{x})}.
\endaligned
\end{equation}
Hence we have proved $|\na_{M}\phi_\La|(\mathbf{x})=|\na_{M_\La}\phi|(F_\La(\mathbf{x}))$.
Let $d\mu$ and $d\mu_\La$ denote the volume elements of $M$ and $M_\La$, respectively.
Integrating by parts infers
\begin{equation}\aligned
\int_{M_\La}\varphi \De_{M_\La}\phi\, d\mu_\La=&-\int_{M_\La}\left\lan\na_{M_\La}\phi,\na_{M_\La}\varphi\right\ran d\mu_\La\\
=&-\int_M\left\lan\na_{M}\phi_\La,\na_{M}(\varphi\circ F_\La)\right\ran d\mu=\int_M\varphi\circ F_\La\, \De_{M}\phi_\La\, d\mu,
\endaligned
\end{equation}
which implies $\De_M\phi_\La(\mathbf{x})=\De_{M_\La}\phi(F_\La(\mathbf{x}))$.
We complete the proof.
\end{proof}

Now let us introduce briefly the several notions from geometric measure theory (see \cite{LYa}\cite{S} for more details), which will be used in the following text.
For a set $S$ in Euclidean space $\R^{n+m}$, we call $S$ \emph{countably $n$-rectifiable} if $S\subset S_0\cup\bigcup_{k=1}^\infty F_k(\R^n)$, where $\mathcal{H}^n(S_0)=0$, and $F_k:\, \R^n\rightarrow\R^{n+m}$ are Lipschitz mappings for all integers $k\ge1$.
For an open set $U\subset\R^{n+m}$, a varifold $V$ in $U$ is a Radon measure on
$$G_n(U)=\{(x,T)|\, x\in U,\, T\ \mathrm{is\ an}\ n\mathrm{-dimensional\ subspace\ of}\ \R^{n+m}\}.$$
Associated to $V$, there is a Radon measure $\mu_V$ on $U$ defined by $V\circ\pi^{-1}$ with the projection $\pi:\,G_n(U)\rightarrow U$.
An $n$-rectifiable varifold in $U$ is a varifold in $U$ which is supported on countably $n$-rectifiable sets.
The multiplicity functions of varifolds can be defined through tangent spaces in the sense of Radon measures (see Definition 38.1 in \cite{S} for instance).
If a varifold $V$ has an integer-valued multiplicity function, we say that $V$ has \emph{integer multiplicity}, which can imply that spt$V$ is $n$-rectifiable (see Theorem 38.3 in \cite{S}).

Let $S$ be a countably $n$-rectifiable set in $\R^{n+m}$ with finite $n$-dimensional Hausdorff measure on any compact set of $\R^{n+m}$.
We use $|S|$ to denote the multiplicity one varifold associated with $S$, i.e., the \emph{$n$-rectifiable varifold} with the support $S$ and the multiplicity function one on $S$.
An $n$-varifold $V$ is said to be a \emph{stationary ($n$-)varifold} in open $U\subset\R^{n+m}$ if $V$ is an $n$-varifold in $U$ with
\begin{equation}\aligned\nonumber
\int \mathrm{div}_V Yd\mu_V=0
\endaligned
\end{equation}
for each $Y\in C^\infty_c(U,\R^{n+m})$. Here, $\mathrm{div}_V Y$ is the divergence of $Y$ restricted on spt$V$.
When we say an $n$-dimensional \emph{minimal cone} $C$ in $\R^{n+m}$, we mean that $C$ is an integer multiplicity stationary varifold with support being a cone.
\begin{lemma}\label{Multi1}
Let $u_k$ be a sequence of smooth solutions to \eqref{SL} on $\R^n$ with $-K\le D^2u_k\le K$ on $\R^n$ for some constant $K>0$. Denote $M_k=\{(x,Du_k(x))\in\R^n\times\R^n|\, x\in \R^n\}$. Suppose $(0,Du_k(0))\in M_k$.
Then there are a $C^{1,1}$-function $u_\infty$ on $\R^n$ with $-K\le D^2u_\infty\le K$ a.e. on $\R^n$ and a multiplicity one stationary varifold $V$ such that
up to a choice of the subsequence $|M_k|$ converges to $V$ in the varifold sense with $\mathrm{spt}V=\{(x,Du_\infty(x))\in\R^n\times\R^n|\, x\in \R^n\}$.
\end{lemma}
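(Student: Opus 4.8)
The plan is to extract the limit in two stages: first a function-theoretic compactness for the $u_k$'s, then a geometric-measure-theoretic compactness for the associated varifolds $|M_k|$, and finally to identify the support of the limiting varifold with the graph of the limiting function. For the first stage, I would normalize $u_k$ by subtracting $u_k(0) + Du_k(0)\cdot x$ (this does not change $D^2u_k$ nor the varifold $|M_k|$ up to a translation in the fibre direction, so there is no loss of generality). Then $|Du_k| \le K|x|$ and $\{u_k\}$ is bounded in $C^{1,1}_{loc}(\R^n)$ because of the two-sided Hessian bound $-K \le D^2 u_k \le K$. By Arzel\`a--Ascoli applied on a compact exhaustion together with a diagonal argument, a subsequence of $u_k$ converges in $C^{1,\alpha}_{loc}(\R^n)$ for every $\alpha \in (0,1)$ to some $u_\infty \in C^{1,1}_{loc}(\R^n)$, and $D^2 u_\infty$ (which exists a.e.\ by Rademacher applied to $Du_\infty$) satisfies $-K \le D^2 u_\infty \le K$ a.e.\ as a weak-$*$ limit of the bounded sequence $D^2 u_k$.

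For the second stage I would use the standard compactness theorem for stationary integral varifolds with locally bounded mass (Allard's compactness theorem, see \cite{S}). The masses are locally uniformly bounded: the graphs $M_k$ are minimal, and by Proposition \ref{vol} (applied with $\Lambda = K$, since $-K \le D^2u_k$) we have $\mathcal{H}^n(M_k \cap \mathbf{B}_R) \le \omega_n(4K^2 + 5 + K^{-2})^{n/2} R^n$ for every $R$, uniformly in $k$. Each $|M_k|$ is a stationary integral ($n$-)varifold because $M_k$ is a minimal submanifold (the special Lagrangian graphs are minimal). Hence a further subsequence of $|M_k|$ converges in the varifold sense to a stationary integral varifold $V$ in $\R^n \times \R^n$.

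The remaining point — and the one I expect to be the only genuinely delicate step — is to identify $\mathrm{spt}\,V$ with $\mathrm{graph}_{Du_\infty} = \{(x, Du_\infty(x))\}$ and to see that the multiplicity is one. The $C^1_{loc}$-convergence $Du_k \to Du_\infty$ gives that $M_k \to \mathrm{graph}_{Du_\infty}$ in local Hausdorff distance, so $\mathrm{spt}\,V \subseteq \mathrm{graph}_{Du_\infty}$; conversely any point $(x, Du_\infty(x))$ is a Hausdorff limit of $(x, Du_k(x)) \in M_k$, so it lies in $\mathrm{spt}\,V$ as well, giving equality of supports. For multiplicity one, the cleanest route is to project: $\Pi : \R^n \times \R^n \to \R^n$ restricts on each $M_k$ to a diffeomorphism onto $\R^n$, and for any $Y \in C^1_c$ one can compute $\mu_{|M_k|}(Y) = \int_{\R^n} Y(x, Du_k(x)) v_k(x)\,dx$ with $v_k = \sqrt{\det(I + D^2u_k D^2u_k)}$; using that $D^2 u_k \rightharpoonup D^2 u_\infty$ weak-$*$ and that along the limit the graph is Lipschitz, one passes to the limit to get $\mu_V(Y) = \int_{\R^n} Y(x, Du_\infty(x)) v_\infty(x)\,dx$ with $v_\infty = \sqrt{\det(I + D^2u_\infty D^2u_\infty)}$, which is exactly the weight of the multiplicity-one varifold carried by the Lipschitz graph of $Du_\infty$. (One should be a little careful here: weak-$*$ convergence of $D^2u_k$ does not pass through the nonlinear function $v_k$ directly, so I would instead argue that since $\mathrm{spt}\,V$ equals the Lipschitz graph and $V$ is a stationary integral varifold supported on this $C^{0,1}$ graph, its density is an integer-valued function; a lower-semicontinuity/monotonicity argument combined with the fact that each $M_k$ has density one everywhere forces the limiting density to be $\le 1$ $\mu_V$-a.e., hence $\equiv 1$.) This identification of the limiting multiplicity as one is where I expect to spend the most care.
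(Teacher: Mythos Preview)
Your plan for the first three stages---$C^{1,\alpha}_{loc}$ compactness of the $u_k$, Allard compactness of the varifolds $|M_k|$, and the two-sided inclusion $\mathrm{spt}\,V = \mathrm{graph}_{Du_\infty}$ via Hausdorff convergence---matches the paper's proof and is fine.

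The gap is in your multiplicity-one argument. Your proposed mechanism, that ``each $M_k$ has density one everywhere forces the limiting density to be $\le 1$'' by ``lower-semicontinuity/monotonicity'', has the inequality the wrong way round: under varifold convergence of stationary varifolds, the density is \emph{upper} semicontinuous (if $\mathbf{x}_k \to \mathbf{x}$ then $\Theta(V,\mathbf{x}) \ge \limsup_k \Theta(|M_k|,\mathbf{x}_k)$), so density one on each $M_k$ only yields density $\ge 1$ on the limit, which you already have. Nothing in the graph structure alone prevents higher multiplicity: a crude projection bound gives only $\mu_V(\mathbf{B}_r(\mathbf{y})) \le (1+K^2)^{n/2}\omega_n r^n$, and oscillation of $D^2 u_k$ (which converges only weak-$*$, as you yourself note) could in principle make $\int v_k$ strictly exceed $\int v_\infty$ on small sets. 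So neither of your two suggested routes closes.

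The paper supplies the missing mechanism by working at a regular point $\mathbf{y}\in\mathrm{reg}\,V$ and invoking the varifold tangent-plane convergence (Lemma~22.2 in Simon's \emph{Lectures}): writing local orthonormal frames $e_{k,1},\dots,e_{k,n}$ on $M_k$ and $\xi_1,\dots,\xi_n$ an orthonormal basis of $T_{\mathbf{y}}V$, one has
\[
\lim_{r\to 0}\Bigl(r^{-n}\lim_{k\to\infty}\int_{M_k\cap\mathbf{B}_r(\mathbf{y})}\bigl|e_{k,1}\wedge\cdots\wedge e_{k,n}-\xi_1\wedge\cdots\wedge\xi_n\bigr|^2\Bigr)=0.
\]
Since $v_k^{-1}=|\langle e_{k,1}\wedge\cdots\wedge e_{k,n},\,E_1\wedge\cdots\wedge E_n\rangle|$, this forces $v_k$ to converge (in the doubly-averaged sense above) to the constant $v_\infty^{-1}:=|\langle\xi_1\wedge\cdots\wedge\xi_n,\,E_1\wedge\cdots\wedge E_n\rangle|$, and then a direct computation of $\lim_{r\to 0}r^{-n}\mu_V(B_r(\Pi(\mathbf{y}))\times\R^n)$ via $\int_{B_r(\Pi(\mathbf{y}))}v_k$ identifies the density as exactly $1$. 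In short: the multiplicity-one step needs control of the \emph{tangent directions} of $M_k$ in the limit, not just of their masses or supports; the varifold convergence carries precisely this Grassmannian information, and that is what you should exploit.
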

\begin{proof}
By Arzela-Ascoli theorem, up to a choice of the subsequence, we assume that there is a $C^{1,1}$-function $u_\infty$ on $\R^n$ with $-K\le D^2u_\infty\le K$ a.e. on $\R^n$ such that $u_k\rightarrow u_\infty$ uniformly on compact sets of $\R^n$ in $C^{1,\a}$-norm for any $\a\in(0,1)$. By compactness of varifolds (see Theorem 42.7 and Remark 42.8 in \cite{S}), up to a choice of the subsequence, we can assume $|M_k|$ converges to an integer multiplicity stationary varifold $V$ in $\R^n\times\R^n$ in the varifold sense. Let $\mu_V$ denote the Radon measure associated to $V$. By monotonicity of the density of $V$ (see formula 17.3 in \cite{S} for instance), we have
$\mu_V(\mathbf{B}_r(\mathbf{x}_*))\ge\omega_nr^n$ for any $\mathbf{x}_*\in\mathrm{spt}V$ and $r>0$. By varifold convergence of $|M_k|$, there is a sequence $\mathbf{x}_k\in M_k$ with $\mathbf{x}_k\rightarrow\mathbf{x}_*$. Denote $\mathbf{x}_k=(x_k,Du_k(x_k))$. Then $x_k$ converges to a point $x_*$ with $\Pi(\mathbf{x}_*)=x_*$, where $\Pi$ denotes the projection from $\R^n\times\R^n$ into $\R^n$ defined by $\Pi(\mathbf{x})=x$ for any $\mathbf{x}=(x,y)\in\R^n\times\R^n$ as before. Therefore, $\mathbf{x}_*=\lim_{k\rightarrow\infty}\mathbf{x}_k=\lim_{k\rightarrow\infty}(x_k,Du_k(x_k))=(x_*,Du_\infty(x_*))$, which implies the support of $V$
\begin{equation}\aligned\label{sptVVVVV}
\mathrm{spt}V\subset\{(x,Du_\infty(x))\in\R^n\times\R^n|\, x\in \R^n\}.
\endaligned
\end{equation}
Note that for any $z\in\R^n$, $\mathcal{H}^n(\mathbf{B}_r(\mathbf{z}_k)\cap M_k)\ge\omega_nr^n$ with $\mathbf{z}_k=(z,Du_k(z))$.
Since $(z,Du_k(z))\rightarrow(z,Du_\infty(z))$ as $k\rightarrow\infty$, from varifold convergence of $|M_k|$ we get $\mu_V(\mathbf{B}_r(\mathbf{z}))\ge\omega_nr^n$ for $\mathbf{z}=(z,Du_\infty(z))$.
In particular, $\mathbf{z}\in\mathrm{spt}V$, which implies
\begin{equation}\aligned\label{sptVVVVV*}
\{(x,Du_\infty(x))\in\R^n\times\R^n|\, x\in \R^n\}\subset\mathrm{spt}V.
\endaligned
\end{equation}

Now it only remains to prove that $V$ has multiplicity one. Let reg$V$ denote the regular part of $V$.
For any $\mathbf{y}\in\mathrm{reg}V$, let $T_{\mathbf{y}}V$ denote the tangent plane of spt$V$ at $\mathbf{y}$.
Let $\xi_1,\cdots,\xi_n$ be an orthonormal basis of $T_{\mathbf{y}}V$. 
From Lemma 22.2 in \cite{S} and Proposition \ref{vol},
\begin{equation}\aligned\label{Mkeix}
\lim_{r\rightarrow0}\left(r^{-n}\lim_{k\rightarrow\infty}\int_{M_k\cap \mathbf{B}_{r}(\mathbf{y})}\left|e_{k,1}\wedge\cdots\wedge e_{k,n}-\xi_{1}\wedge\cdots\wedge \xi_{n}\right|^2\right)=0,
\endaligned
\end{equation}
where $e_{k,1},\cdots,e_{k,n}$ is a local orthonormal tangent frame of $M_k$ for each $k$.
We also treat $e_{k,i}$ as a vector on $\Pi(M_k)$ by letting $e_{k,i}(x)=e_{k,i}(x,Du_k(x))$ for each $i=1,\cdots,n$ and $k\ge1$.
Let $v_k=\sqrt{\det(\de_{ij}+\sum_ku_{ik}u_{jk})}$, then $v_k^{-1}=\left|\lan e_{k,1}\wedge\cdots\wedge e_{k,n},E_1\wedge\cdots\wedge E_n\ran\right|$ with $E_1,\cdots,E_n$ being a standard orthonormal basis of $\R^n$. From $-K\le D^2u_k\le K$ on $\R^n$ and \eqref{Mkeix}, we get
\begin{equation}\aligned\label{Mkeix*}
\lim_{r\rightarrow0}\left(r^{-n}\lim_{k\rightarrow\infty}\int_{B_r(\Pi(\mathbf{y}))}\left|e_{k,1}\wedge\cdots\wedge e_{k,n}-\xi_{1}\wedge\cdots\wedge \xi_{n}\right|^2v_k\right)=0.
\endaligned
\end{equation}
Let $v_\infty$ be a positive constant defined by $\left|\lan \xi_1\wedge\cdots\wedge \xi_n,E_1\wedge\cdots\wedge E_n\ran\right|^{-1}$.
From \eqref{Mkeix*} and
\begin{equation}\aligned
&\int_{B_{r}(\Pi(\mathbf{y}))}\left|1-v_kv_\infty^{-1}\right|=\int_{B_{r}(\Pi(\mathbf{y}))}\left|v_k^{-1}-v_\infty^{-1}\right|v_k\\
\le&\int_{B_{r}(\Pi(\mathbf{y}))}\left|\lan e_{k,1}\wedge\cdots\wedge e_{k,n}-\xi_1\wedge\cdots\wedge \xi_n,E_1\wedge\cdots\wedge E_n\ran\right|v_k\\
\le&\int_{B_{r}(\Pi(\mathbf{y}))}\left|e_{k,1}\wedge\cdots\wedge e_{k,n}-\xi_1\wedge\cdots\wedge \xi_n\right|v_k,
\endaligned
\end{equation}
with Cauchy inequality we get
\begin{equation}\aligned
\lim_{r\rightarrow0}\left(r^{-n}\lim_{k\rightarrow\infty}\int_{B_{r}(\Pi(\mathbf{y}))}\left|1-v_kv_\infty^{-1}\right|\right)=0,
\endaligned
\end{equation}
which implies
\begin{equation}\aligned
&\lim_{r\rightarrow0}\left(r^{-n}\mu_V(B_{r}(\Pi(\mathbf{y}))\times\R^n)\right)
=\lim_{r\rightarrow0}\left(r^{-n}\lim_{k\rightarrow\infty}\mathcal{H}^n\left(M_k\cap(B_{r}(\Pi(\mathbf{y}))\times\R^n)\right)\right)\\
=&\lim_{r\rightarrow0}\left(r^{-n}\lim_{k\rightarrow\infty}\int_{B_{r}(\Pi(\mathbf{y}))}v_k\right)=\omega_n v_\infty=\omega_n\left|\lan \xi_1\wedge\cdots\wedge \xi_n,E_1\wedge\cdots\wedge E_n\ran\right|^{-1}.
\endaligned
\end{equation}
With \eqref{sptVVVVV}, we conclude that $V$ has multiplicity one everywhere on spt$V$. This completes the proof.
\end{proof}
We need a dimensional estimate for singular sets of non-smooth special Lagrangian graphs.
\begin{lemma}\label{SingHD}
Let $u$ be a $C^{1,1}$-function on $B_R\subset\R^n$ with $-K\le D^2u\le K$ a.e. on $B_R$ for some constant $K>0$, and $M=\{(x,Du(x))\in\R^n\times\R^n|\, x\in B_R\}$. If $|M|$ is stationary in $B_R\times\R^n$, then the singular set of $M$ is a closed set of Hausdorff dimension $\le n-4$ in $M$.
\end{lemma}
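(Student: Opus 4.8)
The plan is to reduce the statement to the regularity theory for area-minimizing integral currents, combined with Federer's dimension reduction carried all the way down to dimension three. First I would note that, since $u\in C^{1,1}(B_R)$, the map $x\mapsto(x,Du(x))$ is bi-Lipschitz onto $M$, so $M$ carries a well-defined integer-multiplicity current of multiplicity one, a cycle in $B_R\times\R^n$ whose associated varifold is $|M|$. The crucial claim is that this current is area-minimizing in $B_R\times\R^n$. Indeed $M$ is Lagrangian (the form $\sum_i dx_i\wedge dy_i$ vanishes on $M$ wherever $D^2u$ exists, hence a.e., being a Hessian), and for a Lagrangian graph the mean curvature is $H=J\na_M\theta$ with Lagrangian angle $\theta=\sum_i\arctan\la_i$; stationarity of $|M|$ then forces $\na_M\theta=0$ a.e., so $\theta\equiv\Th$ for a constant $\Th$ on the connected set $B_R$, whence the closed calibrating form $Re(e^{-\sqrt{-1}\Th}\,dz_1\wedge\cdots\wedge dz_n)$ restricts to the volume form of $T_xM$ for a.e.\ $x$, and the standard calibration comparison makes $M$ mass-minimizing among competitors with the same boundary. (Where this lemma is actually used, $|M|$ is a varifold limit of smooth special Lagrangian graphs $|M_k|$ as in Lemma \ref{Multi1}; each $|M_k|$ is calibrated, hence area-minimizing, and area-minimality passes to the limit once the masses converge, which holds by Proposition \ref{vol} and the monotonicity formula, so one may take this shorter route here.)

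Granting that $M$ is area-minimizing, Almgren's interior regularity theorem for area-minimizing integral currents gives that the regular set of $M$ is open and the singular set has Hausdorff dimension $\le n-2$. To improve this to $n-4$ I would run Federer's dimension reduction (cf.\ \cite{S}) inside the class $\mathcal{C}$ of cones arising as iterated tangent cones of $M$. Every $C\in\mathcal{C}$ is again area-minimizing of multiplicity one, is special Lagrangian (the calibrating form is invariant under translations and dilations), and --- the key point --- is a graph over an $n$-plane with Hessian bound $K$, because the blow-up map $(x,y)\mapsto\bigl((x-x_0)/r,(y-Du(x_0))/r\bigr)$ carries $M$ to the graph of $y\mapsto(Du(x_0+ry)-Du(x_0))/r$, whose differential is $D^2u(x_0+ry)\in[-K,K]$; moreover splitting off a Euclidean factor from a product cone preserves all of these properties in lower dimension. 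Hence it suffices to show that every cone in $\mathcal{C}$ of dimension $\le 3$ is a plane, after which the stratification argument yields that the singular set of $M$ is closed of Hausdorff dimension $\le n-4$.

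It remains to classify these low-dimensional cones. A one-dimensional area-minimizing cone is a line. A two-dimensional special Lagrangian cone that is a graph over $\R^2$ is connected with multiplicity one, so its link is a single closed geodesic in a round sphere, i.e.\ a great circle, and the cone is a plane. For a three-dimensional special Lagrangian cone $C$ that is a graph over $\R^3$ with $-K\le D^2w\le K$, the potential $w$ is homogeneous of degree two (since $Dw$ is homogeneous of degree one), lies in $C^{1,1}(\R^3)$, and solves \eqref{SL} a.e.\ with bounded Hessian; the Hessian bound makes \eqref{SL} uniformly elliptic, and after the Lewy rotation $F_K$ of Section 2 the rotated equation is concave, so Evans--Krylov and Schauder theory upgrade $w$ to $C^\infty(\R^3)$. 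Yuan's Liouville theorem in dimension three \cite{Y1} then forces $w$ to be quadratic, i.e.\ $C$ is a plane. Thus $\mathcal{C}$ has no singular cone of dimension $\le 3$, which finishes the proof.

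The principal obstacle is the first step --- upgrading ``$|M|$ stationary with $u\in C^{1,1}$'' to ``$M$ area-minimizing'': making the identity $H=J\na_M\theta$ and the resulting a.e.\ constancy of the Lagrangian angle rigorous at the $C^{1,1}$/varifold level, and then running the calibration comparison with an only a.e.-defined calibration, requires care; on the varifold limits for which the lemma is actually needed this is sidestepped by the limiting argument above. A secondary technical point is the interior smoothness of the homogeneous degree-two $C^{1,1}$ solution appearing in the $n=3$ cone analysis, which genuinely uses the Lewy-rotation concavity trick to bring in Evans--Krylov.
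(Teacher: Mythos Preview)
Your dimension-reduction skeleton is correct and matches the paper's strategy, but the paper takes a lighter route that avoids both the calibration/area-minimizing step and Almgren's big theorem. The paper works directly with the stationary varifold $|M|$ and Allard's regularity: the key input is Lemma~\ref{Multi1}, which guarantees that every blow-up limit (and iterated blow-up) is a multiplicity-one stationary varifold whose support is again a Lipschitz graph with Hessian bounded by $K$. With this in hand, one runs Federer's dimension reduction exactly as you describe, arriving at a cone $C=\R^{n-k}\times C_*$ where $C_*$ is a $k$-dimensional non-flat minimal cone, \emph{regular away from its vertex}, and a graph with bounded slope, for some $0<k\le3$. At this point the paper simply cites the Bernstein theorem for minimal graphs of dimension $\le3$ in arbitrary codimension (Fischer-Colbrie \cite{Fc}, Theorem~5.4; see also Yuan \cite{Y1}, Theorem~1.3) to conclude $C_*$ is flat, a contradiction. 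So your Evans--Krylov step is unnecessary: after dimension reduction $C_*$ is already smooth off the vertex and the cited Bernstein results apply directly. Your calibration argument is valid in spirit, and your alternative route via limits of smooth calibrated graphs is exactly the situation in which the lemma is applied later --- but the paper's use of Lemma~\ref{Multi1} for multiplicity control sidesteps the delicate issue of establishing constancy of the Lagrangian angle at the $C^{1,1}$ level altogether.
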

\textbf{Remark}. Here, a point $\mathbf{x}$ in the singular set of $M$ means that any tangent of $M$ at $\mathbf{x}$ is not an $n$-plane. Such a point is said to be a singular point.
If we write $\mathbf{x}=(x,Du(x))$ for the function $u$ in this lemma, then the singular point $\mathbf{x}$ of $M$ is equivalent to that $u$ is not $C^2$ at $x$.
\begin{proof}
The proof is the combination of Bernstein theorem for 3-dimensional minimal graphs (see Theorem 5.4 in \cite{Fc} or Theorem 1.3 in \cite{Y1}) and Federer's dimension reduction argument.
Let $\mathcal{S}$ denote the singular set of $M$. From Allard's regularity theorem \cite{A} (see also Theorem 24.2 in \cite{S}), $\mathcal{S}$ is a closed set in $M$.
We suppose that $\mathcal{S}$ has Hausdorff dimension $> n-4$.
Then there is a constant $\be>n-4$ so that $\be$-dimensional Hausdorff measure of $\mathcal{S}$ satisfies $\mathcal{H}^\be(\mathcal{S})>0$.
Let $\mathcal{H}^\be_\infty$ be a measure defined by
\begin{equation}\aligned\nonumber
\mathcal{H}^\be_\infty(E)=\omega_\be 2^{-\be}\inf\left\{\sum_{j=1}^\infty(\mathrm{diam} U_j)^\be\bigg|\, E\subset\bigcup_{j=1}^\infty U_j\subset\R^{n}\times\R^n\right\}
\endaligned
\end{equation}
for any set $E$ in $\R^{n}\times\R^n$, where $\omega_\be=\f{\pi^{\be/2}}{\G(\f \be2+1)}$, and $\G(r)=\int_0^\infty e^{-t}t^{r-1}dt$ is the gamma function for $0<r<\infty$.
From Lemma 11.2 in \cite{Gi}, $\mathcal{H}^\be(E)=0$ if and only if $\mathcal{H}^\be_\infty(E)=0$.
From the argument of Proposition 11.3 in \cite{Gi}, there are a point $\mathbf{q}\in \mathcal{S}$ and a sequence $r_j\rightarrow0$ as $j\rightarrow\infty$ such that
\begin{equation}\aligned
\mathcal{H}^\be_\infty\left(\mathcal{S}\cap \mathbf{B}_{r_j}(\mathbf{q})\right)>2^{-\be-1}\omega_\be r_j^\be.
\endaligned
\end{equation}
Up to translation, we assume $\mathbf{q}$ being the origin in $\R^n\times\R^n$.
Let $M_j=\f1{r_j}\left(M\cap \mathbf{B}_1\right)$, $\mathcal{S}_j=\f1{r_j}\left(\mathcal{S}\cap \mathbf{B}_{r_j}\right)$. Then
\begin{equation}\aligned
\mathcal{H}^\be_\infty\left(\mathcal{S}_j\cap \mathbf{B}_{1}\right)>2^{-\be-1}\omega_\be.
\endaligned
\end{equation}
Without loss of generality, we assume that $|M_j|$ converges to a tangent cone $M_*$ of $M$ in $\R^n\times\R^n$ in the varifold sense as $j\rightarrow\infty$.
From Lemma \ref{Multi1}, $M_*$ has multiplicity one everywhere on spt$M_*$.
Let $\mathcal{S}_*$ be the singular set of $M_*$.
If $y_j\in \mathcal{S}_j$ and $y_j\rightarrow y_*\in \mathrm{spt}M_*$, then
it's clear that $y_*$ is a singular point of $M_*$ by Allard's regularity theorem, which implies $\limsup_j\mathcal{S}_j\subset\mathcal{S}_*$. Analog to the proof of Lemma 11.5 in \cite{Gi}, we have $\mathcal{H}^\be_\infty\left(\mathcal{S}_{*}\cap \mathbf{B}_{1}(0)\right)>2^{-\be-1}\omega_\be$, and then
\begin{equation}\aligned
\mathcal{H}^\be\left(\mathcal{S}_{*}\cap \mathbf{B}_{1}\right)\ge\mathcal{H}^\be_\infty\left(\mathcal{S}_{*}\cap \mathbf{B}_{1}\right)>2^{-\be-1}\omega_\be.
\endaligned
\end{equation}
Let us continue the above procedure.
By the dimension reduction argument, there is an $n$-dimensional minimal cone $C\subset\R^{n}\times\R^n$,
such that for some integer $0<k\le3$,
$C$ is a trivial product of $\R^{n-k}$ and a $k$-dimensional regular but non-flat minimal cone $C_*$.
From Lemma \ref{Multi1} and the assumption $-K\le D^2u\le K$ a.e. on $B_R$, $C_*$ has multiplicity one and spt$C_*$ can be written as a graph over $\R^3$.
However, this contradicts to Theorem 5.4 in \cite{Fc} (see also Theorem 1.3 in \cite{Y1}). We complete the proof.
\end{proof}

From Allard's regularity theorem (see Theorem 24.2 in \cite{S} for instance), there is a positive constant $\tau_n>0$ depending only on $n$ such that
if $V$ is a multiplicity one stationary $n$-varifold in $\mathbf{B}_r(\mathbf{q})\subset\R^n\times\R^n$ with $\mathbf{q}\in\mathrm{spt} V$ and $\p(\mathrm{spt} V)\subset\p\mathbf{B}_r(\mathbf{q})$ such that the Radon measure $\mu_V$ associated to $V$ satisfies
\begin{equation}\aligned\label{DEFepn}
\mu_V(\mathbf{B}_r(\mathbf{q}))\le(1+\tau_n)\omega_nr^n,
\endaligned
\end{equation}
then spt$V$ is smooth in $\mathbf{B}_{r/2}(\mathbf{q})$, and the second fundamental form $B_V$ of spt$V\cap \mathbf{B}_{r/2}(\mathbf{q})$ satisfies
\begin{equation}\aligned\label{DEFepn*}
|B_V|\le \f1r\qquad\qquad \mathrm{on}\ \, \mathbf{B}_{r/2}(\mathbf{q})\cap \mathrm{spt}V.
\endaligned
\end{equation}

For any $C^2$ function $f$ on an open subset of $\R^n$, let $\overline{\la}_f(x)$ denote the largest eigenvalue of $D^2f(x)$, and $\underline{\la}_f(x)$ denote the smallest eigenvalue of $D^2f(x)$.
\begin{theorem}\label{BDSLK}
For any constant $K\ge1$, there is a constant $c_{n,K}>0$ depending only on $n,K$ such that if $u$ is a smooth solution to \eqref{SL} on $\R^n$ with the eigenvalues $\la_1,\cdots,\la_n$ of the Hessian $D^2u$ satisfying
\begin{equation}\aligned\label{Assump}
\la_i\la_j\ge-K\qquad\qquad on\ \R^n
\endaligned
\end{equation}
for all $i,j=1,\cdots,n$, and $u$ is not a quadratic polynomial, then the Hessian of $u$ satisfies $-c_{n,K}\le D^2u\le c_{n,K}$ on $\R^n$.
\end{theorem}
\begin{proof}
Let us prove it by contradiction. Let $K$ be a positive constant $\ge1$. Suppose that there is a sequence of smooth solutions $u_k$ to \eqref{SL} on $\R^n$ with the eigenvalues $\la_{1,k},\cdots,\la_{n,k}$ of $D^2u_k$ satisfying
\begin{equation}\aligned\label{Assumpk}
\la_{i,k}\la_{j,k}\ge-K\qquad\qquad on\ \R^n
\endaligned
\end{equation}
for all $i,j=1,\cdots,n$ and $k\ge1$ such that each $u_k$ is not a quadratic polynomial and $\lim_{k\rightarrow\infty}\sup_{\R^n}|D^2u_k|=\infty$.
Then there is a sequence of points $p_k\in\R^n$ such that $\overline{\la}_{u_k}(p_k)\rightarrow\infty$ as $k\rightarrow\infty$.
Let $\Si_k$ be the special Lagrangian graph in $\R^n\times\R^n$ with the graphic function $Du_k$.
Without loss of generality, we assume $Du_k=0$, then $\Si_k$ contains the origin in $\R^n\times\R^n$.
Since non-quadratic $u_k$ implies that $\Si_k$ is not flat, then any tangent cone of $\Si_k$ at infinity is not flat.
Let $\tau_n$ be the constant in \eqref{DEFepn}. We claim
\begin{equation}\aligned\label{Ratioge1epn}
\lim_{r\rightarrow\infty}\f1{\omega_nr^n}\mathcal{H}^n(\mathbf{B}_r\cap \Si_k)\ge1+\tau_n.
\endaligned
\end{equation}
Assume $\lim_{r\rightarrow\infty}\f1{\omega_nr^n}\mathcal{H}^n(\mathbf{B}_r\cap \Si_k)<1+\tau_n$.
Now let us deduce the contradiction. From monotonicity of the density $r^{-n}\mathcal{H}^n(\mathbf{B}_r\cap \Si_k)$, we have
$$\mathcal{H}^n(\mathbf{B}_r\cap \Si_k)<(1+\tau_n)\omega_nr^n$$
for all $r>0$. From \eqref{DEFepn}\eqref{DEFepn*}, we get $|B_{\Si_k}|\le \f1r$ on $\mathbf{B}_{r/2}\cap \Si_k$, where $B_{\Si_k}$ is the second fundamental form of $\Si_k$.
Letting $r\rightarrow\infty$ implies the flatness of $\Si_k$, which is a contradiction.
Hence the claim \eqref{Ratioge1epn} is true.
Then there are a sequence of numbers $r_k>0$ and a number $0<\tau_*<\tau_n$ such that
\begin{equation}\aligned\label{BrqMep}
\mathcal{H}^n(\mathbf{B}_{r_k}(\mathbf{p}_k)\cap \Si_k)=(1+\tau_*)\omega_nr_k^n
\endaligned
\end{equation}
with $\mathbf{p}_k=(p_k,Du(p_k))\in\R^n\times\R^n$. For each $k$, let
$$M_k=\f1{r_k}(\Si_k-\mathbf{p}_k)=\{r_k^{-1}(\mathbf{x}-\mathbf{p}_k)\in\R^n\times\R^n|\, \mathbf{x}\in \Si_k\},$$
which is a special Lagrangian graph through the origin. From \eqref{BrqMep},
we have
\begin{equation}\aligned\label{BrqMiep}
\mathcal{H}^n(\mathbf{B}_{1}\cap M_k)=(1+\tau_*)\omega_n.
\endaligned
\end{equation}
Denote $\hat{u}_k(x)=r_k^{-2}u_k(r_kx+p_k)$ for any $x\in\R^n$. Then $D\hat{u}_k$ is the graphic function of $M_k$.

Up to choose the subsequence, without loss of generality, we assume that the phase $\sum_i\arctan \la_{i,k}$ is a nonnegative constant for each $k$.
From Lemma \ref{mulowerbd} and the assumption \eqref{Assump}, we get
\begin{equation}\aligned\label{LowD2uK}
D^2\hat{u}_k\ge-2nK/\pi.
\endaligned
\end{equation}
Let $F_\La:\,(x,y)\rightarrow(\hat{x},\hat{y})$ be the isometric mapping from $\R^n\times\R^n$ into $\R^n\times\R^n$ defined as \eqref{labxby} with $\La=2nK/\pi$.
From \eqref{LowD2uK}, for each $k$ there is a smooth solution $\bar{u}_k$ to \eqref{SL} (with another phase different from the one for $D^2\hat{u}_k$) on $\R^n$ such that $F_\La(M_k)$ is the graph of $D\bar{u}_k$, i.e., $$F_\La(M_k)=\{(\bar{x},D\bar{u}_k(\bar{x}))\in\R^n\times\R^n|\ \bar{x}\in\R^n\}.$$
Put $M_{k,\La}=F_\La(M_k)$.
From \eqref{BOUNDD2u} and \eqref{LowD2uK}, we have
\begin{equation}\aligned\label{D2baruk******}
-\f{2\La^2+1}{\La}\le D^2\bar{u}_k\le2\La\qquad \mathrm{on}\ \ \R^n.
\endaligned
\end{equation}
From Lemma \ref{Multi1}, there are a $C^{1,1}$-function $\bar{u}_\infty$ with
\begin{equation}\aligned\label{ULD2baruinfty}
-\f{2\La^2+1}{\La}\le D^2\bar{u}_\infty\le2\La\qquad a.e.\ \mathrm{on}\ \ \R^n
\endaligned
\end{equation}
and a multiplicity one stationary $n$-varifold $V_{\infty,\La}$ such that
up to a choice of the subsequence $|M_{k,\La}|$ converges to $V_{\infty,\La}$ in the varifold sense with $\mathrm{spt}V_{\infty,\La}=\{(x,Du_\infty(x))\in\R^n\times\R^n|\, x\in \R^n\}$.
Denote $M_{\infty,\La}=\mathrm{spt}V_{\infty,\La}$. Noting that $M_{k,\La}$ has the uniformly gradient estimate in \eqref{D2baruk******} for each $k$. 
From \eqref{BrqMiep} and varifold convergence of $|M_{k,\La}|$, we get
\begin{equation}\aligned\label{B1Minftytau}
\mathcal{H}^n(\mathbf{B}_{1}\cap M_{\infty,\La})=(1+\tau_*)\omega_n.
\endaligned
\end{equation}
Combining \eqref{DEFepn}\eqref{DEFepn*}, we get $|B_{M_{\infty,\La}}|\le 1$ on $\mathbf{B}_{1/2}\cap M_{\infty,\La}$, where $B_{M_{\infty,\La}}$ is the second fundamental form of  $\mathbf{B}_{1/2}\cap M_{\infty,\La}$.
According to the isometric mapping $F_\La$, there is a countably $n$-rectifiable set $M_\infty$ in $\R^n\times\R^n$ such that $|M_k|$ converges to a stationary varifold $|M_\infty|$ in the varifold sense with $M_{\infty,\La}=F_\La(M_\infty)$,
and $|B_{M_{\infty}}|\le 1$ on $\mathbf{B}_{1/2}\cap M_{\infty}$, where $B_{M_{\infty}}$ is the second fundamental form of $\mathbf{B}_{1/2}\cap M_{\infty}$.

Let $v_k=\sqrt{\det(I+D^2\hat{u}_kD^2\hat{u}_k)}$. We see $v_{k}$ being a function on $M_{k}$ by identifying $v_{k}(x,D\hat{u}_k(x))=v_{k}(x)$.
Let $v_{k,\La}$ be a function on $F_\La(M_k)$ defined by
\begin{equation}\aligned\label{vkvkLa}
v_k(\mathbf{x})=v_{k,\La}(F_\La(\mathbf{x}))\qquad \mathrm{for\ any}\ \mathbf{x}\in M_k.
\endaligned
\end{equation}
Let $B_{M_{k}}$ denote the second fundamental form of $M_{k}$ in $\R^n\times\R^n$.
With \eqref{DEFepn}\eqref{DEFepn*}\eqref{BrqMiep}, we have $|B_{M_{k}}|\le 1$ on $\mathbf{B}_{1/2}\cap M_{k}$.
Combining \eqref{v-1nA} and Lemma \ref{ISOtrans}, we have
\begin{equation}\aligned\label{vLa-1nA}
&\De_{M_{k,\La}} v_{k,\La}^{-\f1n}\Big|_{F_\La(\mathbf{x})}=\De_{M_{k}}\left( v_{k,\La}^{-\f1n}\circ F_\La\right)(\mathbf{x})=\De_{M_k}v_k^{-\f1n}(\mathbf{x})\\
\le&\f{K-1}n v_{k}^{-\f1n}(\mathbf{x})|B_{M_{k}}|^2(\mathbf{x})\le\f{K-1}n v_{k}^{-\f1n}(\mathbf{x})=\f{K-1}n v_{k,\La}^{-\f1n}(F_\La(\mathbf{x}))
\endaligned
\end{equation}
for any $F_\La(\mathbf{x})\in\mathbf{B}_{1/2}\cap M_{k,\La}$.
From Theorem \ref{MVIsuper}, there are constants $\de_n\in(0,1]$ and $\th_{n,K}>0$ depending only on $n,K$ such that
\begin{equation}\aligned
\f1{\mathcal{H}^n\left(M_{k,\La}\cap \mathbf{B}_{1/4}\right)}\int_{M_{k,\La}\cap \mathbf{B}_{1/4}}v_{k,\La}^{-\f{\de_n}n}\le \th_{n,K}v_{k,\La}^{-\f{\de_n}n}(0).
\endaligned
\end{equation}
Since $F_\La$ is isometric with $M_{k,\La}=F_\La(M_k)$, then with \eqref{vkvkLa} the above inequality is equivalent to the following mean value inequality:
\begin{equation}\aligned\label{subv}
\f1{\mathcal{H}^n\left(M_k\cap \mathbf{B}_{1/4}\right)}\int_{M_k\cap \mathbf{B}_{1/4}}v_k^{-\f{\de_n}n}\le \th_{n,K}v_k^{-\f{\de_n}n}(0).
\endaligned
\end{equation}
Note that $\lim_{k\rightarrow\infty}\overline{\la}_{u_k}(p_k)=\infty$ implies $v_k(0)\rightarrow\infty$. Combining \eqref{subv} and $|B_{M_k}|\le1$ on $\mathbf{B}_{1/2}\cap M_k$, we conclude that
\begin{equation}\aligned\label{Milocinfty}
\lim_{k\rightarrow\infty}\inf_{M_k\cap \mathbf{B}_{1/8}}v_k=\infty.
\endaligned
\end{equation}
Let $\mathcal{S}_{M_\infty}$ be the singular set of $M_\infty$, then $\mathcal{S}_{M_{\infty,\La}}=F_\La(\mathcal{S}_{M_\infty})$.
Note that $\mathcal{S}_{M_\infty}$ is closed in $M_\infty$, $|M_k|$ converges to the $n$-rectifiable stationary varifold $|M_\infty|$ in the varifold sense, and $|M_\infty|$ has multiplicity one everywhere on $M_\infty$.
Then from Allard's regularity theorem for any $\mathbf{x}\in M_\infty\setminus \mathcal{S}_{M_\infty}$, there is a constant $r_{\mathbf{x}}>0$ such that $M_k\cap B_{r_{\mathbf{x}}}(\mathbf{x})$ converges to $M_\infty\cap B_{r_{\mathbf{x}}}(\mathbf{x})$ smoothly.
Since $\mathcal{S}_{M_\infty}$ has codimension 4 at least by Lemma \ref{SingHD}, then $M_\infty\setminus \mathcal{S}_{M_\infty}$ is connected.
Combining \eqref{Milocinfty} and the mean value inequality for $v_k$ like \eqref{subv}, for any compact set $\Om$ with $\Om\cap \mathcal{S}_{M_\infty}=\emptyset$, we have
\begin{equation}\aligned\label{Miinfty}
\lim_{k\rightarrow\infty}\inf_{M_k\cap \Om}v_k=\infty.
\endaligned
\end{equation}
Then combining \eqref{Miinfty} and the assumption \eqref{Assump}, we have
\begin{equation}\aligned\label{minuiconv}
\liminf_{k\rightarrow\infty}\inf_{M_k\cap \Om}\underline{\la}_{u_k}\ge0.
\endaligned
\end{equation}
From \eqref{RLubu}, we have
\begin{equation}\aligned\label{LUbaru}
-\f{1}{2\La}\le \underline{\la}_{\bar{u}_\infty}\le \overline{\la}_{\bar{u}_\infty}=2\La
\endaligned
\end{equation}
on the set where $\bar{u}_\infty$ is $C^2$.

If $\mathcal{S}_{M_{\infty,\La}}$ is empty, then by a standard argument (see Yuan \cite{Y1,Y3}) \eqref{LUbaru} on $\R^n$ implies the flatness of $M_{\infty,\La}$, i.e., $\bar{u}_\infty$ is a quadratic polynomial.
However, this violates \eqref{B1Minftytau}.
Hence $\mathcal{S}_{M_{\infty,\La}}\neq\emptyset$.
Now we blow $M_{\infty,\La}$ up at a point in $\mathcal{S}_{M_{\infty,\La}}$.
By (Federer's) dimension reduction argument, we get a multiplicity one minimal cone $C_*$, which is a trivial product of a $l$-dimensional nonflat regular minimal cone $C^*$ and a Euclidean factor $\R^{n-l}(4\le l\le n)$.
With Lemma \ref{Multi1}, there is a sequence of manifolds $\widehat{M}_k$ (obtained from $M_{\infty,\La}$ by scaling and translation) such that
$\left|\widehat{M}_k\right|$ converges in the varifold sense to $C_*$.
Moreover, from \eqref{ULD2baruinfty} there is a $C^{1,1}$ function $w^*$ on $\R^n$ with
\begin{equation}\aligned\label{D2u*}
-\f{2\La^2+1}{\La}\le D^2w^*\le2\La\qquad a.e.\ \mathrm{on}\ \ \R^n
\endaligned
\end{equation}
such that spt$C_*$ is a graph over $\R^n$ in $\R^n\times\R^n$ with the graphic function $Dw^*$.
Note that $w^*$ is not $C^2$ on the set $\{0^l\}\times\R^{n-l}=\{(0,\cdots,0,x_{l+1},\cdots,x_n)\in\R^n|\, (x_{l+1},\cdots,x_n)\in\R^{n-l}\}$.
From \eqref{LUbaru}, we get
\begin{equation}\aligned\label{LUw*}
-\f{1}{2\La}\le \underline{\la}_{w^*}\le \overline{\la}_{w^*}=2\La
\endaligned
\end{equation}
on $\R^n\setminus(\{0^l\}\times\R^{n-l})$.
Hence, without loss of generality, for each $j=1,\cdots,n$, $\p_jw^*$ has the decomposition as follows:
$$\p_jw^*(x_1,\cdots,x_n)=\f{\p}{\p x_j}w^*(x_1,\cdots,x_n)=\phi_j(x_1,\cdots,x_l)+\sum_{k=l+1}^nc_{jk}x_k$$
for some function $\phi_j$ and some constant matrix $(c_{jk})_{n\times(n-l)}$ with $k=l+1,\cdots,n$ and $j=1,\cdots,n$.
By choosing the coordinate system of $x_1,\cdots,x_n$, we can assume
\begin{equation}\aligned\label{pjw*phijcj}
\p_jw^*(x_1,\cdots,x_n)=\phi_j(x_1,\cdots,x_l)+c_{j}x_j
\endaligned
\end{equation}
for some constant vector $(c_1,\cdots,c_n)$ with $c_1=\cdots=c_l=0$.
Outside $\{0^l\}\times\R^{n-l}$,
for $i=1,\cdots,l$ and $j=l+1,\cdots,n$, we have
\begin{equation}\aligned
\p_{ij}w^*=\p_{ji}w^*=0=\p_i\phi_j(x_1,\cdots,x_l),
\endaligned
\end{equation}
which implies $\phi_j(x_1,\cdots,x_l)=0$ on $\R^l$ for $j=l+1,\cdots,n$ as spt$C_*$ contains the origin. We consider a $C^{1,1}$ function $\Phi^*$ on $\R^n$ defined by
\begin{equation}\aligned\label{Phi***}
\Phi^*=w^*-\f12\sum_{j=l+1}^nc_{j}x_j^2.
\endaligned
\end{equation}
Then with \eqref{pjw*phijcj} we have $\p_j\Phi^*=\p_jw^*-c_jx_j=0$ for any $j=l+1,\cdots,n$.
In other words, $\Phi^*$ is a function depending only on $x_1,\cdots,x_l$. Hence we can define a function $\Phi$ on $\R^l$ by $\Phi(x_1,\cdots,x_l)=\Phi^*(x_1,\cdots,x_l,0,\cdots,0)$ such that
$$\mathrm{spt}C^*=\{(x,D\Phi(x))\in\R^l\times\R^l|\, x\in\R^l\}.$$
Let $\mu_1\ge\cdots\ge\mu_l$ be the eigenvalues of $D^2\Phi$.
Then $D^2\Phi^*$ has eigenvalues $\mu_1,\cdots,\mu_l$ and $c_{l+1},\cdots,c_n$ with
\begin{equation}\aligned
\sum_{i=1}^l\arctan\mu_i+\sum_{i=l+1}^n\arctan c_j=\Th.
\endaligned
\end{equation}
From the calibration $Re(e^{-\sqrt{-1}(\Th-\sum_{i=l+1}^n\arctan c_j)}dz_1\wedge \cdots\wedge dz_l)$, the Lagrangian graph spt$C^*$ is minimizing in $\R^l\times\R^l$.
Note that spt$C^*$ has only one singularity at the origin. From \eqref{LUw*} and \eqref{Phi***}, $\Phi(x_1,\cdots,x_l)+\f12\sum_{j=l+1}^nc_{j}x_j^2$ has eigenvalues between $-\f{1}{2\La}$ and $2\La$, which implies
\begin{equation}\aligned
2\La\ge\mu_1\ge\cdots\ge\mu_l\ge-\f{1}{2\La}
\endaligned
\end{equation}
on $\R^l\setminus\{0^l\}$. By the maximum principle argument for $\sqrt{\mathrm{det}(I+(D^2\Phi)^2)}$ (see Yuan \cite{Y1,Y3} for instance), we get the flatness of spt$C^*$, which is a contradiction. This completes the proof.
\end{proof}

Combining Warren-Yuan's argument in \cite{WmY2}, we have the following Liouville type theorem.
\begin{theorem}\label{mainLiou}
There exists a constant $\ep_n\in(0,1)$ such that if $u$ is a smooth solution to the special Lagrangian equation \eqref{SL} on $\R^n$ with eigenvalues $\la_1(x),\cdots,\la_n(x)$ of the Hessian $D^2u(x)$ satisfying
\begin{equation}\aligned\label{1+epnlailaj}
3(1+\ep_n)+(1+\ep_n)\la_i^2(x)+2\la_i(x)\la_j(x)\ge0
\endaligned
\end{equation}
for all $i,j=1,\cdots,n$ and $x\in\R^n$, then $u$ must be a quadratic polynomial.
\end{theorem}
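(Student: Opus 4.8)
The plan is to argue by contradiction. Suppose $u$ is a smooth solution of \eqref{SL} on $\R^n$ satisfying \eqref{1+epnlailaj} for a constant $\ep_n\in(0,1/2]$ that will be pinned down at the end, yet $u$ is not a quadratic polynomial. First I would extract from \eqref{1+epnlailaj} the weaker pointwise inequality needed to apply Theorem \ref{BDSLK}, and then feed the resulting global Hessian bound back into \eqref{1+epnlailaj} to reduce to Warren--Yuan's situation.

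\emph{Algebraic reduction.} I claim \eqref{1+epnlailaj} forces $\la_i(x)\la_j(x)\ge-9$ for all $i,j$ and all $x\in\R^n$. The product $\la_i\la_j$ is negative only if $\la_i,\la_j$ have opposite signs; writing $\la_i=a>0$ and $\la_j=-b<0$ and relabelling so that $a\le b$, the instance of \eqref{1+epnlailaj} for this ordered pair reads $2ab\le 3(1+\ep_n)+(1+\ep_n)a^2$. Since $b\ge a$ this yields $2a^2\le 3(1+\ep_n)+(1+\ep_n)a^2$, hence $a^2\le 3(1+\ep_n)/(1-\ep_n)$, and then $2ab\le 3(1+\ep_n)+(1+\ep_n)a^2\le 6(1+\ep_n)/(1-\ep_n)\le 18$ because $\ep_n\le 1/2$; therefore $\la_i\la_j=-ab\ge-9$. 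Thus the hypothesis \eqref{Assump} of Theorem \ref{BDSLK} holds with $K=9$.

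\emph{Hessian bound, then Warren--Yuan.} Since $u$ is not quadratic, Theorem \ref{BDSLK} applied with $K=9$ gives a global Hessian bound $-c_n\le D^2u\le c_n$ on $\R^n$, with $c_n:=c_{n,9}$ depending only on $n$. Hence $|\la_i(x)|\le c_n$ everywhere, and rewriting \eqref{1+epnlailaj} as
\[
3+\la_i^2+2\la_i\la_j=\bigl(3(1+\ep_n)+(1+\ep_n)\la_i^2+2\la_i\la_j\bigr)-\ep_n(3+\la_i^2)\ \ge\ -\ep_n(3+c_n^2)
\]
shows $u$ satisfies \eqref{3lailaj} up to an error $\ep_n(3+c_n^2)$ among eigenvalues bounded by $c_n$. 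I would now fix $\ep_n\in(0,1/2]$ small enough — depending only on $n$ through $c_n$, so that no circularity arises — that this error lies below the threshold tolerated by Warren--Yuan's argument from \cite{WmY2}. That argument shows a solution of \eqref{SL} on $\R^n$ obeying \eqref{3lailaj} together with a global Hessian bound must be quadratic; granting that it is stable under a sufficiently small deficit in \eqref{3lailaj} once the Hessian is controlled, we conclude $u$ is quadratic, contradicting the hypothesis.

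\emph{Expected main difficulty.} The crux is precisely this quantitative stability. Under \eqref{1+epnlailaj} together with $|D^2u|\le c_n$, the identity \eqref{Delogv} on the special Lagrangian graph $M=\{(x,Du(x))\}$ gives only $\De_M\log v\ge-\ep_n C_n|B_M|^2$, with $C_n$ depending only on $n$, in place of the exact subharmonicity $\De_M\log v\ge0$ holding under \eqref{3lailaj}. One must show that, for $\ep_n$ small depending only on $n$, this negative term is absorbed by the strictly positive part of the right-hand side of \eqref{Delogv} — exploiting the special Lagrangian relation $\sum_i h_{iik}=0$ for each $k$ — tightly enough that Warren--Yuan's conclusion still runs through the mean value inequality of Theorem \ref{MVIsuper} and the Neumann--Poincar\'e inequality \eqref{NP}, both available here since the Hessian bound gives $M$ Euclidean volume growth; this forces $v$ to be constant on $M$, hence $B_M\equiv 0$ and $u$ a quadratic polynomial. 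The rest — carrying the constants through Theorems \ref{BDSLK} and \ref{MVIsuper} and checking the order of the choices — is routine bookkeeping.
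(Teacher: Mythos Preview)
Your reduction to Theorem~\ref{BDSLK} with $K=9$ is correct and matches the paper's first move (the paper derives $\la_i\la_j\ge-3(1+\ep_n)/(1-\ep_n)$ directly, but the effect is the same: a uniform Hessian bound $|D^2u|\le c_n$ depending only on $n$). From that point on, however, the two arguments diverge.

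The paper does \emph{not} attempt to prove that Warren--Yuan's argument is stable under a small deficit in \eqref{3lailaj}. Instead it runs a compactness argument: assuming no $\ep_n$ works, it takes a sequence of non-quadratic solutions $u_k$ satisfying \eqref{1+epnlailaj} with $\ep_n=1/k$, uses Theorem~\ref{BDSLK} to get a uniform Hessian bound, and then --- because each $u_k$ is non-quadratic --- finds radii $r_k\to\infty$ at which the rescaled graphs $\widehat M_k=r_k^{-1}M_k$ carry definite volume excess in $\mathbf B_1$. By Lemma~\ref{Multi1} a subsequence converges as varifolds to a multiplicity-one stationary graph of some $C^{1,1}$ function $\hat u_\infty$ whose eigenvalues satisfy the \emph{exact} condition $3+\hat\la_i^2+2\hat\la_i\hat\la_j\ge0$ a.e. Now Warren--Yuan's own argument (Theorem~3.1 and Proposition~3.1 of \cite{WmY2}) applies to this limit and forces flatness, contradicting the volume excess. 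The point is that passing to the limit kills the $\ep$-deficit for free, so no stability statement is needed.

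Your route, by contrast, commits you to the stability question you flag as the main difficulty, and the sketch you give does not settle it. Under \eqref{1+epnlailaj} the coefficients $1+\la_i\la_j$ in \eqref{Delogv} can genuinely be negative, and the special-Lagrangian constraint $\sum_i h_{iik}=0$ alone does not produce a strictly positive term of size $\sim|B_M|^2$ to absorb $-\ep_n C_n|B_M|^2$; that is precisely why Warren--Yuan needed the \emph{stronger} condition \eqref{3eplailaj} (with $(1-\ep)\la_i^2$, not $(1+\ep)\la_i^2$) to get $\De_M\log v\ge0$. Moreover, Theorem~\ref{MVIsuper} and \eqref{NP} give Harnack-type control, not a Liouville theorem: a bounded almost-subharmonic function on a minimal graph with Euclidean growth is not automatically constant, so ``forces $v$ to be constant'' is not justified. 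If you want to make your direct approach work you would essentially have to reprove Warren--Yuan with a quantitative margin; the paper's compactness argument avoids this entirely and is the cleaner path.
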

\begin{proof}
Let us prove it by contradiction. Suppose Theorem \ref{mainLiou} fails. Then there is a sequence of smooth solutions $u_k$ to \eqref{SL} on $\R^n$ with eigenvalues $\la_{1,k}(x),\cdots,\la_{n,k}(x)$ of $D^2u_k(x)$ satisfying
\begin{equation}\aligned\label{1klailaj}
3\left(1+\f1k\right)+\left(1+\f1k\right)\la_{i,k}^2(x)+2\la_{i,k}(x)\la_{j,k}(x)\ge0
\endaligned
\end{equation}
for all $i,j=1,\cdots,n$, $k\ge2$ and $x\in\R^n$, and each $u_k$ is not a quadratic polynomial.
The inequality \eqref{1klailaj} implies
\begin{equation}\aligned
\left(1-\f1k\right)\la_{i,k}(x)\la_{j,k}(x)\ge-3\left(1+\f1k\right)
\endaligned
\end{equation}
for all $i,j=1,\cdots,n$, $k\ge2$ and $x\in\R^n$.
From Theorem \ref{BDSLK}, $D^2u_k$ is uniformly bounded on $\R^n$ by a constant $c_n$ depending only on $n$, i.e., $-c_n\le D^2u_k\le c_n$.
Let $M_k$ denote the Lagrangian graph with the graphic function $Du_k$ for each $k$. From Allard' regularity theorem (see also \eqref{Ratioge1epn}), there exists a sequence $r_k\rightarrow\infty$ such that
\begin{equation}\aligned
\liminf_{k\rightarrow\infty}r_k^{-n}\mathcal{H}^n(\mathbf{B}_{r_k}\cap M_k)>\omega_n.
\endaligned
\end{equation}
Let $\widehat{M}_k=\f1{r_k}M_k$, and $\hat{u}_k=r_k^{-2}u_k(r_k\cdot)$. Then
\begin{equation}\aligned\label{widehatMk}
\liminf_{k\rightarrow\infty}\mathcal{H}^n(\mathbf{B}_1\cap \widehat{M}_k)>\omega_n.
\endaligned
\end{equation}
From Lemma \ref{Multi1} and $-c_n\le D^2u_k\le c_n$,
there are a $C^{1,1}$-function $\hat{u}_\infty$ on $\R^n$ with
\begin{equation}\aligned\label{D2uhinftycn}
-c_n\le D^2\hat{u}_\infty\le c_n\qquad a.e.\ \mathrm{on}\ \ \R^n
\endaligned
\end{equation}
and a multiplicity one stationary $n$-varifold $V_\infty$ such that
up to a choice of the subsequence $|\widehat{M}_k|$ converges to $V_\infty$ in the varifold sense with $\mathrm{spt}V_\infty=\{(x,D\hat{u}_\infty(x))\in\R^n\times\R^n|\, x\in \R^n\}$.

From Lemma \ref{SingHD},
there is a closed set $\mathcal{S}$ in $\R^n$ with Hausdorff dimension of $\mathcal{S}\le n-4$ such that $\hat{u}_\infty$ is smooth on $\R^n\setminus\mathcal{S}$. Let $\hat{\la}_{1,k},\cdots,\hat{\la}_{n,k}$ denote the eigenvalues of $D^2\hat{u}_k$ with $\hat{\la}_{1,k}\ge\cdots\ge\hat{\la}_{n,k}$. From $-c_n\le D^2u_k\le c_n$, we have $c_n\ge\hat{\la}_{1,k}\ge\cdots\ge\hat{\la}_{n,k}\ge-c_n$.
For any compact set $K$ in $\R^n$ with $K\cap\mathcal{S}=\emptyset$, from Lemma \ref{Multi1} we can assume that $\hat{u}_k$ converges to $\hat{u}_\infty$ smoothly on $K$ up to a choice of the subsequence. 
In particular, for any $x\in K$, $D^2\hat{u}_k(x)\to D^2\hat{u}_\infty(x)$. For each $k\ge1$, let $(\xi_{1,k},\cdots,\xi_{n,k})$ be an orthonormal $(n\times n)$-matrix with unit vectors $\xi_{1,k},\cdots,\xi_{n,k}$ so that
\begin{equation}\aligned\label{hatlaikxD2hu}
\hat{\la}_{i,k}(x)=\left\lan \xi_{i,k},D^2\hat{u}_k(x)\xi_{i,k}\right\ran\qquad \qquad \mathrm{for\ each}\ i.
\endaligned
\end{equation}
Up to a choice of the subsequences, we assume $\xi_{i,k}\to\xi_i$ for some vector $\xi_i$ for each $i$. Then $(\xi_{1},\cdots,\xi_{n})$ forms an orthonormal $(n\times n)$-matrix with $|\xi_1|=\cdots=|\xi_n|=1$. From \eqref{hatlaikxD2hu},
we have
\begin{equation}\aligned
\lim_{k\to\infty}\hat{\la}_{i,k}(x)=\lim_{k\to\infty}\left\lan\xi_{i,k},D^2\hat{u}_k(x)\xi_{i,k}\right\ran=\lim_{k\to\infty}\left\lan\xi_{i},D^2\hat{u}_\infty(x)\xi_{i}\right\ran.
\endaligned
\end{equation}
Denote $\hat{\la}_{i,x}=\lim_{k\to\infty}\hat{\la}_{i,k}(x)$.
Then $\hat{\la}_{i,x}$ is the eigenvalue of $D^2\hat{u}_\infty(x)$ with the eigenvector $\xi_i$.
From \eqref{1klailaj}, it follows that $3+\hat{\la}_{i,x}^2+2\hat{\la}_{i,x}\hat{\la}_{j,x}\ge0$ for each $i,j=1,\cdots,n$.
Let $\hat{\la}_1,\cdots,\hat{\la}_n$ be the eigenvalues of $D^2\hat{u}_\infty$ on $\R^n\setminus\mathcal{S}$ with $\hat{\la}_{1}\ge\cdots\ge\hat{\la}_{n}$.
The above argument gives
\begin{equation}\aligned
3+\hat{\la}_i^2+2\hat{\la}_i\hat{\la}_j\ge0\qquad \ \ \mathrm{on}\ \R^n\setminus\mathcal{S}
\endaligned
\end{equation}
for all $i,j=1,\cdots,n$. Together with \eqref{D2uhinftycn},
now we follow the proof of Theorem 3.1 in \cite{WmY2} including Proposition 3.1 in \cite{WmY2}, and get the flatness of spt$V_\infty$. However, this contradicts to \eqref{widehatMk} with the help of Lemma \ref{Multi1}. We complete the proof.
\end{proof}
\textbf{Remark.}
Obviously if we assume
\begin{equation}\aligned\label{epnlailaj}
\inf_{i,j=1,\cdots,n}\la_i(x)\la_j(x)\ge-\f32(1+\ep_n)
\endaligned
\end{equation}
for any $x\in\R^n$, or
\begin{equation}\aligned\label{epnlai*}
-\sqrt{3}(1+\ep_n)\le D^2u\le\sqrt{3}(1+\ep_n)
\endaligned
\end{equation}
on $\R^n$, then \eqref{1+epnlailaj} holds true. Namely, any smooth solution $u$ to \eqref{SL} on $\R^n$ satisfying \eqref{epnlailaj} for any $x\in\R^n$ or \eqref{epnlai*} on $\R^n$ must be a quadratic polynomial.

Using the above Liouville type theorem, we can get an interior curvature estimate for special Lagrangian graphs, which is key for Hessian estimates of solutions to special Lagrangian equations in the following section.
\begin{corollary}\label{CurvEst}
Let $u$ be a smooth solution to the special Lagrangian equation \eqref{SL} on $B_2\subset\R^n$ with $Du(0)=0$, and the eigenvalues $\la_1(x),\cdots,\la_n(x)$ of $D^2u(x)$ satisfies
\eqref{1+epnlailaj} for all $i,j$ and $x\in B_2$. Let $M$ be the special Lagrangian graph over $B_2$ with the graphic function $Du$.
Then there is a constant $c_n>0$ depending only on $n$ such that $|B_M|\le c_n$ on $M\cap \mathbf{B}_1$, where $B_M$ is the second fundamental form of $M$.
\end{corollary}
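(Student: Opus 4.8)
The plan is a blow-up argument whose limit is ruled out by the Liouville theorem (Theorem~\ref{mainLiou}). Assume the conclusion fails. Since both the hypothesis \eqref{1+epnlailaj} and $|B_M|$ are unchanged when $u$ is replaced by $-u$ (the condition only involves $\la_i^2$ and $\la_i\la_j$, which are even under a global sign change), after passing to a subsequence I get smooth solutions $u_k$ on $B_2$ with $Du_k(0)=0$, nonnegative phase $\Th_k=\sum_i\arctan\la_{i,k}\ge0$, satisfying \eqref{1+epnlailaj} for all $i,j$, with $M_k=\{(x,Du_k(x))\}$ and $\max_{M_k\cap\mathbf{B}_1}|B_{M_k}|\to\infty$.

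\textbf{A uniform lower Hessian bound.} First I would show that \eqref{1+epnlailaj} and $\Th_k\ge0$ together force $D^2u_k\ge-\La$ on $B_2$ for a constant $\La$ depending only on $n$. If $\la_{\min}<0$ is very negative at a point, then $\Th_k\ge0$ gives $\sum_{\la_i>0}\arctan\la_i\ge\arctan|\la_{\min}|$, so some $\la_{i_0}\ge\tan(\f1n\arctan|\la_{\min}|)$, which is bounded below by a positive constant once $|\la_{\min}|$ is large; feeding the pair $(i_0,j_*)$ and then $(j_*,i_0)$ (with $\la_{j_*}=\la_{\min}$) into \eqref{1+epnlailaj} yields $\la_{i_0}\gtrsim\f{2|\la_{\min}|}{1+\ep_n}$ and simultaneously $\la_{i_0}\lesssim\f{(1+\ep_n)|\la_{\min}|}{2}$, which are incompatible for $|\la_{\min}|$ large because $\ep_n<1$. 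Hence $|\la_{\min}|\le\La(n)$.

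\textbf{Point selection, rescaling and Lewy rotation.} Choose $x_k$ maximizing $(2-|x|)^2|B_{M_k}|^2$ over $\overline B_{2-1/k}$; then $\mu_k:=(2-|x_k|)|B_{M_k}|(\mathbf{x}_k)\to\infty$, where $\mathbf{x}_k=(x_k,Du_k(x_k))$. Rescaling $M_k$ about $\mathbf{x}_k$ by $\la_k:=|B_{M_k}|(\mathbf{x}_k)$ produces special Lagrangian graphs $\widetilde M_k=\{(x,D\widetilde u_k(x))\}$ over domains containing $B_{\mu_k}\uparrow\R^n$, with $D\widetilde u_k(0)=0$, $D^2\widetilde u_k\ge-\La$, eigenvalues still obeying \eqref{1+epnlailaj}, $|B_{\widetilde M_k}|(\mathbf 0)=1$, and, by the maximality of $x_k$, $|B_{\widetilde M_k}|\le2$ on $\widetilde M_k$ over $B_{\mu_k/2}$. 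Applying the fixed rotation $F_\La$ of \eqref{labxby} to each $\widetilde M_k$ turns the one-sided bound $D^2\widetilde u_k\ge-\La$ into the two-sided bound \eqref{BOUNDD2u} for the rotated graphic functions $\bar u_k$, whose (convex, by Lemma~\ref{ConvDf} and \eqref{Dux2x1}) domains still exhaust $\R^n$. By Lemma~\ref{Multi1} applied with a diagonal argument over the growing domains, a subsequence of $\bar u_k$ converges in $C^{1,\a}_{\mathrm{loc}}$ to a $C^{1,1}$ entire function $\bar u_\infty$ with the same Hessian bounds, and $|F_\La(\widetilde M_k)|$ converges to a multiplicity-one stationary varifold $V_\infty$ whose support is the entire graph of $\bar u_\infty$; transporting the curvature bound by the isometry $F_\La$ and invoking Allard's theorem upgrades this to smooth convergence on compact sets, so $\bar M_\infty:=\mathrm{spt}V_\infty$ is a smooth minimal submanifold with $|B_{\bar M_\infty}|(\mathbf 0)=1$.

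\textbf{Conclusion and the main difficulty.} Un-rotating by $F_\La^{-1}$ and using that the $\widetilde M_k$ are graphs of gradients of the convex functions $\widetilde u_k+\f\La2|\cdot|^2$ — so that the smooth limit remains an honest graph with Hessian bounded away from $2\La$ — one obtains that $F_\La^{-1}(\bar M_\infty)$ is the graph of a smooth entire solution $\widetilde u_\infty$ of \eqref{SL} with $D^2\widetilde u_\infty\ge-\La$; since \eqref{1+epnlailaj} is a closed condition it is inherited by $\widetilde u_\infty$, so Theorem~\ref{mainLiou} forces $\widetilde u_\infty$ to be a quadratic polynomial, contradicting $|B_{\bar M_\infty}|(\mathbf 0)=1$. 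The hard part is precisely this passage to the limit: one must guarantee that the blow-up limit is a genuine entire special Lagrangian graph rather than a stationary varifold developing vertical tangent planes or higher multiplicity. This is exactly why the lower Hessian bound and the Lewy rotation are inserted before the limit is taken — the rotation converts the degeneracy-prone one-sided bound into the robust two-sided bound \eqref{BOUNDD2u}, for which Lemma~\ref{Multi1} applies verbatim — and why the scale-invariant point selection is used, so that the normalization $|B|(\mathbf 0)=1$ survives the rescaling and yields a sharp contradiction with the Liouville theorem.
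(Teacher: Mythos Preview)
Your overall architecture --- blow-up with scale-invariant point selection, lower Hessian bound from \eqref{1+epnlailaj} and $\Th_k\ge0$, Lewy rotation $F_\La$ to convert the one-sided bound into the two-sided \eqref{BOUNDD2u}, smooth compactness, and contradiction via Theorem~\ref{mainLiou} --- matches the paper's. Your direct derivation of the lower Hessian bound, and point selection in the base ball rather than the ambient ball, are harmless variants (the paper instead reduces \eqref{1+epnlailaj} to $\la_i\la_j\ge -K$ and quotes Lemma~\ref{mulowerbd}).

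There is, however, a real gap in your conclusion. You assert that $F_\La^{-1}(\bar M_\infty)$ is the graph of an entire smooth solution because ``the smooth limit remains an honest graph with Hessian bounded away from $2\La$''. This is not justified: point selection normalizes the \emph{second fundamental form}, not the Hessian, and $D^2\widetilde u_k(0)=D^2u_k(x_k)$ carries no a~priori upper bound. If $\overline\la_{u_k}(x_k)\to\infty$ along a subsequence, then by \eqref{RLubu} the rotated limit attains $\overline\la_{\bar u_\infty}=2\La$, the un-rotated limit acquires a vertical tangent plane there, and Theorem~\ref{mainLiou} no longer applies. You correctly flagged vertical tangents as the danger, but the Lewy rotation removes them only in the $\bar x$-coordinates; it does nothing to prevent them in the original ones. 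The paper handles this by an explicit dichotomy: your argument is its Case~2 ($|D^2\hat u_k|$ locally bounded along the limit), while its Case~1 (Hessian blows up somewhere on the limit) invokes the mean-value inequality of Theorem~\ref{MVIsuper} for the superharmonic function $v_k^{-1/n}$ (via \eqref{v-1nA}) to propagate $v_k\to\infty$ to every point of the limit, and then reruns the Liouville-type analysis from the proof of Theorem~\ref{BDSLK} in rotated coordinates to force flatness. That second case cannot be dispatched by the convexity of $\widetilde u_k+\tfrac{\La}{2}|\cdot|^2$ alone.
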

\begin{proof}
Let us prove it by contradiction. Suppose that there is a sequence of smooth solution $u_k$ to the special Lagrangian equation \eqref{SL} on $B_2\subset\R^n$ with $Du_k(0)=0$
such that the eigenvalues $\la_{1,k},\cdots,\la_{n,k}$ of $D^2u_k$ satisfies
\begin{equation}\aligned\label{1+epnlailaj***}
3(1+\ep_n)+(1+\ep_n)\la_{i,k}^2(x)+2\la_{i,k}(x)\la_{j,k}(x)\ge0
\endaligned
\end{equation}
for all $i,j=1,\cdots,n$, $k\ge1$ and $x\in B_2$, and the special Lagrangian graph $M_k$ of the graphic function $u_k$ satisfies
\begin{equation}\aligned
\lim_{k\rightarrow\infty}|B_{M_k}|(z_k)=\infty
\endaligned
\end{equation}
for some sequence of points $z_k\in M_k\cap \mathbf{B}_1$. Here, $B_{M_k}$ is the second fundamental form of $M_k$.

Then there exists a sequence of points $\mathbf{q}_k\in \mathbf{B}_{\f32}$ such that
\begin{equation}\aligned\label{rkgotoinfty}
r_k\triangleq\left(\f32-|\mathbf{q}_k|\right)|B_{M_k}|(\mathbf{q}_k)=\sup_{\mathbf{B}_{\f32}\cap M_k}\left(\f32-|\mathbf{x}|\right)|B_{M_k}|(\mathbf{x}) \rightarrow\infty
\endaligned
\end{equation}
as $k\rightarrow\infty$. Denote $\mathbf{q}_k=(q_k,Du_k(q_k))$.
Put $\tau_k=\f32-|\mathbf{q}_k|>0$, then $\mathbf{B}_{\tau_k}(\mathbf{q}_k)\subset\mathbf{B}_{\f32}$. Let $R_k=2r_k/\tau_k$, $\Si_k$ be a scaling of a part of $M_k$ through the origin defined by
$$\Si_k=\{R_k(\mathbf{x}-\mathbf{q}_k)\in\R^n\times\R^n|\, \mathbf{x}\in M_k\cap \mathbf{B}_{\tau_k/2}(\mathbf{q}_k)\},$$
and
$$\hat{u}_k(x)=R_k^2u_k(R_k^{-1}x+q_k)-R_kx\cdot Du_k(q_k)\qquad \mathrm{for\ any}\ x\in\Pi(M_k\cap \mathbf{B}_{\tau_k/2}(\mathbf{q}_k)),$$
then $\Si_k$ is a special Lagrangian graph in $\mathbf{B}_{r_k}$ with the graphic function $D\hat{u}_k$ and $\p\Si_k\cap\mathbf{B}_{r_k}=\emptyset$.

Let $B_{\Si_k}$ be the second fundamental form of $\Si_k$ in $\R^n\times\R^n$.
Since $\f{\tau_k}2\le\f32-|x|$ for all $x\in \mathbf{B}_{\f{\tau_k}2}(\mathbf{q}_k)$, then by the definition of $r_k$ we have
\begin{equation}\aligned\label{BdSii}
\sup_{\Si_k}|B_{\Si_k}|=&\f1{R_k}\sup_{\mathbf{B}_{\f{\tau_k}2}(\mathbf{q}_k)\cap M_k}|B_{M_k}|
\le\f1{R_k}\f{2}{\tau_k}\sup_{\mathbf{B}_{\f{\tau_k}2}(\mathbf{q}_k)\cap M_k}\left(\f32-|\mathbf{x}|\right)|B_{M_k}|(\mathbf{x})\\
\le&\f{2}{R_k\tau_k}\sup_{\mathbf{B}_{\f32}\cap M_k}\left(\f32-|\mathbf{x}|\right)|B_{M_k}|(\mathbf{x})=\f{2r_k}{R_k\tau_k}=1,
\endaligned
\end{equation}
and
\begin{equation}\aligned\label{BdSii0}
|B_{\Si_k}|(0)=\f1{R_k}|B_{M_k}|(\mathbf{q}_k)
=\f1{R_k}\f{1}{\tau_k}\sup_{\mathbf{B}_{\f32}\cap M_k}\left(\f32-|\mathbf{x}|\right)|B_{M_k}|(\mathbf{x})=\f{r_k}{R_k\tau_k}=\f12.
\endaligned
\end{equation}

From \eqref{1+epnlailaj***}, there holds
\begin{equation}\aligned
3(1+\ep_n)+(1+\ep_n)\min\{\la_{i,k}^2(x),\la_{j,k}^2(x)\}+2\la_{i,k}(x)\la_{j,k}(x)\ge0
\endaligned
\end{equation}
for all $i,j=1,\cdots,n$ and $x\in B_2$, which implies
\begin{equation}\aligned\label{3*lailaj*}
3\f{1+\ep_n}{1-\ep_n}+\la_{i,k}(x)\la_{j,k}(x)\ge0.
\endaligned
\end{equation}
Let $\hat{\la}_{1,k}(x),\cdots,\hat{\la}_{n,k}(x)$ be the eigenvalues of $D^2\hat{u}_k(x)$. By the definition of $\hat{u}_k$, the above inequality implies
\begin{equation}\aligned\label{3lailaj*}
3\f{1+\ep_n}{1-\ep_n}+\hat{\la}_{i,k}(x)\hat{\la}_{j,k}(x)\ge0
\endaligned
\end{equation}
for each integer $k\ge1$, $i,j\in\{1,\cdots,n\}$, $x\in\Pi(\Si_k\cap\mathbf{B}_{r_k})$.
With Lemma \ref{mulowerbd} and \eqref{3lailaj*}, we get $\hat{\la}_{k,i}\ge-\La$ with $\La=\f{6n(1+\ep_n)}{\pi(1-\ep_n)}$ on $\Pi(\Si_k\cap\mathbf{B}_{r_k})$.

Let $F_\La$ be the Lewy rotation defined in \eqref{labxby}, then $F_\La(\Si_k)\subset\mathbf{B}_{r_k}$ through the origin with $\p(F_\La(\Si_k))\cap\mathbf{B}_{r_k}=\emptyset$  by the definition of $\Si_k$.
From \eqref{Dux2x1}, $F_\La(\Si_k)$ is a graph over $\Pi(F_\La(\Si_k))$ in $\mathbf{B}_{r_k}$.
From \eqref{yibxj}, for each $k$ there are $n$ smooth functions $(w_{k,1},\cdots,w_{k,n})$ on $\Pi(F_\La(\Si_k))$ with $w_{k,i}(0)=0$ and $\p_{i}w_{k,j}=\p_{j}w_{k,i}$ for each $i,j=1,\cdots,n$
such that $F_\La(\Si_k)$ is the graph of $(w_{k,1},\cdots,w_{k,n})$. Here, $\p_1,\cdots,\p_n$ is a standard orthonormal basis of $\R^n$.
A similar argument of \eqref{RLubu}\eqref{BOUNDD2u}, all the eigenvalues of the symmetric matrix $(\p_{i}w_{k,j})$ are between $-\f{2\La^2+1}\La$ and $2\La$, i.e.,
\begin{equation}\aligned\label{wkijbound}
-\f{2\La^2+1}\La\le(\p_{i}w_{k,j})\le2\La\qquad \mathrm{on}\ \Pi(F_\La(\Si_k)).
\endaligned
\end{equation}

Let $B_{\r_k}$ be the largest ball centered at the origin in $\Pi(F_\La(\Si_k))$. Denote $|w_{k,*}|=\sqrt{\sum_{i=1}^nw_{k,i}^2}$. Then from \eqref{wkijbound} for any $y\in\p B_{\r_k}$
\begin{equation}\aligned
|w_{k,*}|(y)=\int_0^1\f{\p}{\p t}|w_{k,*}|(ty)dt=\sum_{i=1}^n\int_0^1\f{w_{k,i}y\cdot Dw_{k,i}}{|w_{k,*}|}dt\le(2\La+1)|y|=(2\La+1)\r_k.
\endaligned
\end{equation}
Combining $\r_k^2+|w_{k,*}|^2(y)=r_k^2$, we get $\r_k\ge\f{r_k}{2(\La+1)}$. In particular, $\lim_{k\rightarrow\infty}\r_k=\infty$ from \eqref{rkgotoinfty}.
From Frobenius' theorem (see Lemma 7.2.11 in \cite{X} for instance), there is a function $\bar{u}_k$ on $B_{\r_k}$ with $D\bar{u}_k(0)=(0,\cdots,0)$ such that $D\bar{u}_k=(w_{k,1},\cdots,w_{k,n})$ on $B_{\r_k}$.
The inequality \eqref{wkijbound} implies
\begin{equation}\aligned\label{barukbound}
-\f{2\La^2+1}\La\le D^2\bar{u}_k\le2\La\qquad \mathrm{on}\ B_{\r_k}.
\endaligned
\end{equation}
From \eqref{BdSii} and the isometric mapping $F_\La$, we get $\sup_{F_\La(\Si_k)}|B_{F_\La(\Si_k)}|\le1$,
where $B_{F_\La(\Si_k)}$ is the second fundamental form of $F_\La(\Si_k)$ in $\R^n\times\R^n$.
Hence, $|D^3\bar{u}_{k}|$ is uniformly bounded on any compact set of $\R^n$ for all suitable large $k$.
By Schauder's theory of elliptic equations, there is a smooth function $\bar{u}_\infty$ on $\R^n$
such that up to a choice of the subsequence, $\bar{u}_{k}$ converges smoothly to $\bar{u}_\infty$ on compact sets of $\R^n$, and $\Si_{\infty,\La}\triangleq\{(x,D\bar{u}_\infty(x))\in\R^n\times\R^n|\ x\in\R^n\}$ is a smooth minimal graph over $\R^n$.
Hence, there is a smooth minimal submanifold $\Si_\infty$ with $F_\La(\Si_\infty)=\Si_{\infty,\La}$ such that $\Si_k\cap E$ converges smoothly to $\Si_\infty\cap E$ for any compact set $E\subset\R^n\times\R^n$.

Now let us finish the proof by dividing into two cases.
\begin{itemize}
  \item Case 1. There are a point $\mathbf{x}_\infty\in\Si_\infty$ and a sequence $\mathbf{x}_k\in\Si_k$ with $\mathbf{x}_k\rightarrow\mathbf{x}_\infty$ such that there holds
$|D^2\hat{u}_k|(x_k)\rightarrow\infty$ with $\mathbf{x}_k=(x_k,D\hat{u}_k(x_k))$.
From the proof of Theorem \ref{BDSLK}, the mean value inequality for the function $(\det(I+D^2\hat{u}_kD^2\hat{u}_k))^{-\f1{2n}}$ implies that for any sequence $\mathbf{y}_k\in\Si_k$ with $\mathbf{y}_k\rightarrow\mathbf{y}_\infty$ and $\limsup_k|\mathbf{y}_k|<\infty$, there holds
$|D^2\hat{u}_k|(y_k)\rightarrow\infty$ with $\mathbf{y}_k=(y_k,D\hat{u}_k(y_k))$.
By following the argument of the proof of Theorem \ref{BDSLK} on the part of the singular set of $M_\infty$, we get the flatness of $\Si_\infty$ from \eqref{3lailaj*}. However, this contradicts to \eqref{BdSii0}.

  \item Case 2. For any point $\mathbf{x}_\infty\in\Si_\infty$ and any sequence $\mathbf{x}_k=(x_k,D\hat{u}_k(x_k))\in\Si_k$ with $\mathbf{x}_k\rightarrow\mathbf{x}_\infty$, there holds $\limsup_{k\rightarrow\infty}|D^2\hat{u}_k|(x_k)<\infty$. With \eqref{BdSii} we get that the smooth minimal submanifold $\Si_\infty$ is a speical Lagrangian graph over $\R^n$ in $\R^n\times\R^n$. Let $u_\infty$ be a smooth function such that $Du_\infty$ is the graphic function of $\Si_\infty$. Since $\Si_k\cap E$ converges smoothly to $\Si_\infty\cap E$ for any compact set $E\subset\R^n\times\R^n$, then \eqref{3lailaj*} implies
\begin{equation}\aligned
3\f{1+\ep_n}{1-\ep_n}+\la_{i,\infty}(x)\la_{j,\infty}(x)\ge0,
\endaligned
\end{equation}
for each $i,j\in\{1,\cdots,n\}$, $x\in\R^n$, where $\la_{1,\infty},\cdots,\la_{n,\infty}$ are the eigenvalues of $D^2u_\infty$.
      From Theorem \ref{mainLiou}, we get the flatness of $\Si_\infty$, which contradicts to \eqref{BdSii0}.
\end{itemize}
\end{proof}

\section{Hessian estimates for special Lagrangian equations}

In this section, we use superharmonic functions on special Lagrangian graphs to derive Hessian estimates for the solutions to special Lagrangian equations.
\begin{theorem}\label{GraSL}
Let $u$ be a smooth solution to the special Lagrangian equation \eqref{SL} on $B_R\subset\R^n$ with the eigenvalues $\la_1(x),\cdots,\la_n(x)$ of the Hessian $D^2u(x)$ satisfying
\eqref{1+epnlailaj} for all $i,j$ and $x\in B_R$.
Then there is a constant $C_{n}>0$ depending only on $n$ such that
\begin{equation}\aligned
|D^2u(0)|\le C_{n} \mathrm{exp}\left(C_{n}\f{\max_{B_R}|Du-Du(0)|^n}{R^n}\right).
\endaligned
\end{equation}
\end{theorem}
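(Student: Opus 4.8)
The plan is to reduce the estimate to a mean value inequality on the special Lagrangian graph for a negative power of the volume element, and then to control, by a Bombieri--De Giorgi--Miranda type chaining, how thin that graph can project near the origin. After subtracting an affine function and rescaling we may assume $R=1$, $Du(0)=0$, and we write $a=\max_{B_1}|Du|$. Since \eqref{1+epnlailaj} forces $\la_i\la_j\ge-K_n$ for a dimensional $K_n$ (as in the derivation of \eqref{3*lailaj*} in the proof of Corollary \ref{CurvEst}), Lemma \ref{mulowerbd} gives a dimensional lower bound $D^2u\ge-\La_n$; consequently $|D^2u(0)|\le C_n v(0)$ for $v:=\sqrt{\det(I+D^2uD^2u)}$, so it suffices to prove $v(0)\le C_n e^{C_n a}$. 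Put $M=\mathrm{graph}_{Du}$.

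First, Corollary \ref{CurvEst} (rescaled) gives $|B_M|\le c_n$ on $M\cap\mathbf{B}_{1/2}$, so \eqref{v-1nA} shows that $\phi:=v^{-1/n}$ is \emph{almost superharmonic} there: $\De_M\phi\le\be_n\phi$ with $\be_n=\tfrac{K_n-1}{n}c_n^2$ dimensional. Theorem \ref{MVIsuper} cannot be applied to $M$ directly, because the constant $\k$ of \eqref{DEFk} for $M$ is $\sup_M v$, which is exactly what we wish to bound. This is circumvented by the Lewy rotation of Section 2: since $D^2u\ge-\La_n$, the rotated graph $F_{\La_n}(M)=\mathrm{graph}_{D\bar u}$ has a \emph{dimensionally} bounded Hessian $|D^2\bar u|\le C_n$ by \eqref{BOUNDD2u}, hence a dimensional $\k$; moreover $\bar x$ is injective with a dimensional lower Lipschitz constant by \eqref{Dux2x1}, so $\bar x(B_1)$ contains a ball $B_{c_n}(0)$, and $F_{\La_n}(M)$ is a genuine graph with controlled geometry inside $\mathbf{B}_{c_n}$. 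By Lemma \ref{ISOtrans} the transported $\phi$ satisfies the same differential inequality on $F_{\La_n}(M)\cap\mathbf{B}_{c_n}$.

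Now apply Theorem \ref{MVIsuper} on $F_{\La_n}(M)$ with a radius $\rho\sim c_n$: since $\k$ and $\be_n\rho^2$ are both dimensional, the mean value constant is dimensional, and transporting the inequality back to $M$ through the isometry yields
\[
v(0)^{\de_n/n}\ \le\ C_n\,\frac{\mathcal H^n(M\cap\mathbf{B}_\rho)}{\int_{M\cap\mathbf{B}_\rho}v^{-\de_n/n}}\ \le\ C_n\,\frac{\rho^n}{\mathcal H^n\big(\Pi(M\cap\mathbf{B}_\rho)\big)},
\]
where we used Proposition \ref{vol} (with $\La=\La_n$) for the numerator and, for the denominator, $\int_{M\cap\mathbf{B}_\rho}v^{-\de_n/n}=\int_{\Pi(M\cap\mathbf{B}_\rho)}v^{1-\de_n/n}\,dx\ge\mathcal H^n(\Pi(M\cap\mathbf{B}_\rho))$ together with $v\ge1$. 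When $a\lesssim1$ this already finishes the proof: $Du(0)=0$ and $|Du|\le a$ on $B_1$ force $\Pi(M\cap\mathbf{B}_\rho)\supset B_{\rho/2}$ once $\rho>2a/\sqrt3$, giving $v(0)\le C_n$.

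The remaining case $a\gg1$ is where the work lies, and I expect it to be the main obstacle: at scale $\rho\sim c_n$ the base $\Pi(M\cap\mathbf{B}_\rho)$ can be extremely thin, while the curvature estimate only keeps $\log v$ oscillating by $O(1)$ over distances $\lesssim R$, so no single application of the displayed inequality breaks the loop between "$v(0)$ large'' and "base thin''. The exponential must instead be extracted by iteration, exactly as in Bombieri--De Giorgi--Miranda \cite{BDM} (and Finn \cite{Fi} for $n=2$): one chains the displayed estimate, and its analogue centered at nearby points, through a dyadic family of scales, using $|\na_M\log v|\lesssim|B_M|\lesssim1$ to compare $v$ on consecutive balls and the bound $|Du|\le a$ to quantify the thickness of the base at each scale; each scale contributes a bounded multiplicative factor, and the number of scales needed to descend from the thin base up to a ball of size $\sim R$ is $O(a)$, producing the factor $e^{C_n a}$. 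Throughout, all comparisons are transported through $F_{\La_n}$ so the constants stay dimensional. Undoing the normalization and using $|D^2u(0)|\le C_n v(0)$ gives \eqref{Sharpest}.
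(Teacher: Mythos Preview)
Your outline is on the right track and shares the paper's key ingredients: reduce to bounding $v(0)$, Lewy rotate so that the rotated graph $F_\La(M)=\mathrm{graph}_{D\bar u}$ has dimensionally bounded Hessian (hence a dimensional $\k$ in Theorem~\ref{MVIsuper}), use Corollary~\ref{CurvEst} to make $\be$ dimensional, and then chain. The displayed inequality you derive from a single application of Theorem~\ref{MVIsuper} is correct and is exactly how the paper closes the argument in the end.

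Where your sketch diverges from the paper is in the chaining, and your description there (``dyadic family of scales'', comparing $v$ on consecutive balls via $|\na_M\log v|\lesssim|B_M|$, bounding $\mathcal H^n(\Pi(M\cap\mathbf B_\rho))$ from below) is not what actually works cleanly. The paper does \emph{not} change scales and does \emph{not} use the pointwise gradient of $\log v$. Instead it works entirely at a fixed unit scale on the rotated graph: since $|D^2\bar u|\le C_n$, the rotated base $\bar x(M_1)$ sits in a ball of radius $\lesssim 1+L$ (with $L=\max_{B_1}|Du|$), and one covers $\bar x(\overline{M_1})$ by a lattice $(t_*\Z)^n$ with a dimensional spacing $t_*$. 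Chaining the mean value inequality for $v_\La^{-\de_n/n}$ from the origin outward one lattice step at a time, each step contributes a fixed multiplicative factor $\a_n$, and any lattice point is reached in $O(L)$ steps. Summing over the at most $C_n(1+L)^n$ lattice balls gives
\[
\int_{M_1}v^{-\de_n/n}=\int_{F_\La(M_1)}v_\La^{-\de_n/n}\le C_n e^{C_nL}\,v^{-\de_n/n}(0).
\]
The proof then closes with the elementary identity $\omega_n=\int_{B_1}1\,dx=\int_{M_1}v^{-1}\le\int_{M_1}v^{-\de_n/n}$ (integrating over $M_1$, which by definition projects onto all of $B_1$), rather than by trying to show $\Pi(M\cap\mathbf B_\rho)$ is large. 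This sidesteps entirely the ``thin base'' difficulty you flagged: one never needs to know that the projection of an extrinsic ball is large, only that the \emph{domain} $B_1$ is covered after going far enough on the rotated graph. Once you replace your dyadic picture by this fixed-scale Harnack chain on $F_\La(M)$ and finish with $\int_{M_1}v^{-1}=\omega_n$, your outline coincides with the paper's proof.
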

\begin{proof}
By scaling, we only need to show the case of $R=3$. By considering $u-Du(0)\cdot x$, we can assume $Du(0)=0$.
Let $M_r=\{(x,Du)\in\R^n\times\R^n|\ x\in B_r\}$ for $r\in(0,3]$, and $M=M_3$ for short.
We consider the mapping $F_\La:\,(x,y)\rightarrow(\hat{x},\hat{y})$ as \eqref{labxby} with $\La=\f{6n(1+\ep_n)}{\pi(1-\ep_n)}$. Let $\bar{x},\bar{y}$ be mappings defined in \eqref{barxbaryx},
then
\begin{equation}\aligned\label{barxMr**}
\bar{x}(M_r)=\left\{\f1{\sqrt{4\La^2+1}}(2\La x+Du(x))\in\R^n\Big|\,x\in B_r\right\}.
\endaligned
\end{equation}
From Lemma \ref{mulowerbd} and \eqref{3lailaj*}, the function $\tilde{u}\triangleq\f1{\sqrt{4\La^2+1}}(u(x)+\La|x|^2)$ is convex with 
$D^2\tilde{u}\ge\La$ on $B_3$.
Since $\bar{x}: B_3\to D\tilde{u}(B_3)$ is injective from \eqref{Dux2x1}, and $\mathrm{det}J>0$ from \eqref{DEFJ}, then 
$\bar{x}(M)=D\tilde{u}(B_3)$ is simply connected.
Therefore, $F_\La(M)$ can be written as a graph over $\bar{x}(M)$ with the graphic function $D\bar{u}$ for some solution $\bar{u}$ to \eqref{SL}.
From \eqref{BOUNDD2u}, one has
\begin{equation}\aligned\label{La111222333}
-\f{2\La^2+1}{\La}\le D^2\bar{u}\le2\La\qquad \mathrm{on}\ \bar{x}(M).
\endaligned
\end{equation}

For any $t>0$, let $(t\Z)^n$ denote the lattice in $\R^n$ defined by
$$\{(x_1,\cdots,x_n)\in\R^n|\, t^{-1}x_i\in\Z\ \mathrm{for \ each}\ i\}.$$
Let $Q=(t_*\Z)^n\cap\bar{x}(\overline{M_1})$ with $t_*=\La/\sqrt{\La^2+(2\La^2+1)^2}$.
For any distinct $q_1,q_2\in Q$ with $|q_1-q_2|=t_*$, let $p_1,p_2\in\bar{x}(\overline{M_1})$ satisfy $q_i=\bar{x}(p_i)$ for $i=1,2$. From \eqref{x2x1},
\begin{equation}\aligned\label{p1p2La}
\lan p_1-p_2,\bar{x}(p_1)-\bar{x}(p_2)\ran=\f{\lan p_1-p_2,2\La p_1+Du(p_1)-2\La p_2-Du(p_2)\ran}{\sqrt{4\La^2+1}}\ge\f{\La|p_1-p_2|^2}{\sqrt{4\La^2+1}}.
\endaligned
\end{equation}
Then combining the definitions of $\bar{x},\bar{y}$ in \eqref{barxbaryx}, and \eqref{Dux2x1}\eqref{p1p2La}, one has
\begin{equation}\aligned
&|D\bar{u}(q_1)-D\bar{u}(q_2)|^2=|\bar{y}(p_1)-\bar{y}(p_2)|^2\\
=&\f1{4\La^2+1}\left|2\La\sqrt{4\La^2+1}(\bar{x}(p_1)-\bar{x}(p_2))-(4\La^2+1)(p_1-p_2)\right|^2\\
\le&4\La^2\left|\bar{x}(p_1)-\bar{x}(p_2)\right|^2-4\La^2|p_1-p_2|^2+(4\La^2+1)|p_1-p_2|^2\\
\le&4\La^2\left|\bar{x}(p_1)-\bar{x}(p_2)\right|^2+\f{4\La^2+1}{\La^2}\left|\bar{x}(p_1)-\bar{x}(p_2)\right|^2=\f{(2\La^2+1)^2}{\La^2}t_*^2.
\endaligned
\end{equation}
By the definition of $t_*$, we have
\begin{equation}\aligned\label{q1q2phiq1phiq2}
|q_1-q_2|^2+|D\bar{u}(q_1)-D\bar{u}(q_2)|^2\le t_*^2\left(1+\f{(2\La^2+1)^2}{\La^2}\right)=1.
\endaligned
\end{equation}
Put
\begin{equation}\aligned
\mathbf{Q}=\{(q,D\bar{u}(q))\in F_\La(\overline{M_1})|\, q\in Q\}.
\endaligned
\end{equation}
Then from \eqref{q1q2phiq1phiq2} we have
\begin{equation}\aligned\label{FMcover}
F_\La(M_1)\subset \bigcup_{\mathbf{q}\in \mathbf{Q}}\mathbf{B}_{1}(\mathbf{q}).
\endaligned
\end{equation}

Let $v=\sqrt{\det(I+D^2uD^2u)}$, and we see $v$ being a function on $M$ by identifying $v(x,Du(x))=v(x)$.
Let $v_{\La}$ be a function on $F_\La(M)$ defined by
\begin{equation}\aligned\label{vkvkLa*}
v(\mathbf{x})=v_{\La}(F_\La(\mathbf{x}))\qquad \mathrm{for\ any}\ \mathbf{x}\in M.
\endaligned
\end{equation}
Combining \eqref{v-1nA}\eqref{3*lailaj*} and Lemma \ref{ISOtrans}, (compared with \eqref{vLa-1nA}) we have
\begin{equation}\aligned
\De_{F_\La(M)} v_{\La}^{-\f1n}\le\f{2+4\ep_n}{n(1-\ep_n)} v_{\La}^{-\f1n}|B_{F_\La(M)}|^2
\endaligned
\end{equation}
on $F_\La(M)$, where $B_{F_\La(M)}$ is the second fundamental form of $F_\La(M)$.
From Theorem \ref{MVIsuper} and Corollary \ref{CurvEst}, there are constants $\de_n\in(0,1]$ and $\th_{n}>0$ depending only on $n$ such that
\begin{equation}\aligned\label{FLathnnvLa*}
\f1{\mathcal{H}^n\left(F_\La(M)\cap \mathbf{B}_{r}(\mathbf{x})\right)}\int_{F_\La(M)\cap \mathbf{B}_{r}(\mathbf{x})}v_{\La}^{-\f{\de_n}n}\le \th_{n}v_{\La}^{-\f{\de_n}n}(\mathbf{x})
\endaligned
\end{equation}
for any $\mathbf{x}\in F_\La(M)$ with $d(\mathbf{x},F_\La(\p M))<2r$.

Let $L=\max_{B_1}|Du|$, then by the definition of $\bar{x}(M_r)$ in \eqref{barxMr**}, $\bar{x}(M_1)$ belongs to a ball $B_\r$ centered at the origin with radius $\r$ satisfying
\begin{equation}\aligned
\r\le\f{2\La+L}{\sqrt{4\La^2+1}}\le 1+\f{L}{2\La}.
\endaligned
\end{equation}
Hence, with \eqref{q1q2phiq1phiq2} there is a constant $c_n>0$ depending only on $n$ such that the number of the discrete set $\mathbf{Q}$ satisfies
\begin{equation}\aligned\label{sharpQ}
\sharp \mathbf{Q}\le c_n(1+L)^n.
\endaligned
\end{equation}
Up to a choice of $c_n$,
for any $\mathbf{q}\in \mathbf{Q}$ there is a finite sequence of points $\mathbf{q}_0,\mathbf{q}_1,\cdots,\mathbf{q}_m\in \mathbf{Q}$ with $\mathbf{q}_0=\mathbf{0}\in\R^n\times\R^n$, $\mathbf{q}=\mathbf{q}_m$, $m+1\le c_n(1+L)^n$ and $|\mathbf{q}_{i+1}-\mathbf{q}_i|\le1$ for $i=0,1,\cdots,m-1$.
Then from \eqref{FLathnnvLa*}
\begin{equation}\aligned\label{vLaf14f32}
\int_{F_\La(M)\cap \mathbf{B}_{\f14}(\mathbf{q}_1)}v_\La^{-\f{\de_n}n}\le\int_{F_\La(M)\cap \mathbf{B}_{\f32}(\mathbf{0})}v_\La^{-\f{\de_n}n}\le \th_nv_\La^{-\f{\de_n}n}(\mathbf{0})\mathcal{H}^n\left(F_\La(M)\cap \mathbf{B}_{\f32}(\mathbf{0})\right).
\endaligned
\end{equation}
For any $\mathbf{x},\mathbf{x'}\subset F_\La(\overline{M_1})$, from \eqref{La111222333} there is a constant $\a_n>0$ depending only on $n$ such that
\begin{equation}\aligned\label{xx'athn}
\mathcal{H}^n\left(F_\La(M)\cap \mathbf{B}_{\f32}(\mathbf{x'})\right)\le \f{\a_n}{\th_n}\mathcal{H}^n\left(F_\La(M)\cap \mathbf{B}_{\f14}(\mathbf{x})\right).
\endaligned
\end{equation}
Then from \eqref{vLaf14f32}, it follows that
\begin{equation}\aligned
\int_{F_\La(M)\cap \mathbf{B}_{\f14}(\mathbf{q}_1)}v_\La^{-\f{\de_n}n}\le\a_nv_\La^{-\f{\de_n}n}(\mathbf{0})\mathcal{H}^n\left(F_\La(M)\cap \mathbf{B}_{\f14}(\mathbf{q}_1)\right).
\endaligned
\end{equation}
Hence there is a point $\mathbf{z}_1\in\mathbf{B}_{\f14}(\mathbf{q}_1)$ such that $v_\La^{-\f{\de_n}n}(\mathbf{z}_1) \le\a_nv_\La^{-\f{\de_n}n}(\mathbf{0})$.

By an induction argument, we suppose that there is a point $\mathbf{z}_k\in\mathbf{B}_{\f14}(\mathbf{q}_k)$ such that $v_\La^{-\f{\de_n}n}(\mathbf{z}_k) \le\a_n^k v_\La^{-\f{\de_n}n}(\mathbf{0})$.
Since $B_{\f32}(\mathbf{z}_k)\cap F_\La(\p M)=\emptyset$, with \eqref{vLaf14f32}\eqref{xx'athn} we have
\begin{equation}\aligned
\int_{F_\La(M)\cap \mathbf{B}_{\f14}(\mathbf{q}_{k+1})}v_\La^{-\f{\de_n}n}\le&\int_{F_\La(M)\cap \mathbf{B}_{\f32}(\mathbf{z}_k)}v_\La^{-\f{\de_n}n}\le \th_nv_\La^{-\f{\de_n}n}(\mathbf{z}_k)\mathcal{H}^n\left(F_\La(M)\cap \mathbf{B}_{\f32}(\mathbf{z}_k)\right)\\
\le& \th_n\a_n^{k} v_\La^{-\f{\de_n}n}(\mathbf{0})\mathcal{H}^n\left(F_\La(M)\cap \mathbf{B}_{\f32}(\mathbf{z}_k)\right)\\
\le&\a_n^{k+1}v_\La^{-\f{\de_n}n}(\mathbf{0})\mathcal{H}^n\left(F_\La(M)\cap \mathbf{B}_{\f14}(\mathbf{q}_{k+1})\right).
\endaligned
\end{equation}
Hence there is a point $\mathbf{z}_{k+1}\in\mathbf{B}_{\f14}(\mathbf{q}_{k+1})$ such that $v_\La^{-\f{\de_n}n}(\mathbf{z}_{k+1}) \le\a_n^{k+1}v_\La^{-\f{\de_n}n}(\mathbf{0})$.
Therefore, with \eqref{La111222333}
\begin{equation}\aligned
\int_{F_\La(M)\cap \mathbf{B}_{1}(\mathbf{q})}v_\La^{-\f{\de_n}n}\le& \int_{F_\La(M)\cap \mathbf{B}_{\f32}(\mathbf{z}_m)}v_\La^{-\f{\de_n}n}\le \th_nv_\La^{-\f{\de_n}n}(\mathbf{z}_m)\mathcal{H}^n\left(F_\La(M)\cap \mathbf{B}_{\f32}(\mathbf{z}_m)\right)\\
\le&\a_n^{m+1}v_\La^{-\f{\de_n}n}(\mathbf{0})\mathcal{H}^n\left(F_\La(M)\cap \mathbf{B}_{\f14}(\mathbf{q})\right)\le c_n'\a_n^{c_n(1+L)^n}v_\La^{-\f{\de_n}n}(\mathbf{0}).
\endaligned
\end{equation}
Here, $c_n'$ is a constant depending only on $n$.
Combining \eqref{FMcover}\eqref{sharpQ}, we have
\begin{equation}\aligned
&\int_{F_\La(M_1)}v_\La^{-\f{\de_n}n}\le\sum_{\mathbf{q}\in \mathbf{Q}}\int_{F_\La(M)\cap \mathbf{B}_{1}(\mathbf{q})}v_\La^{-\f{\de_n}n}\\
\le& c_n'\a_n^{c_n(1+L)^n}v_\La^{-\f{\de_n}n}(\mathbf{0})c_n(1+L)^n=c_nc_n'e^{n\log(1+L)}e^{c_n(1+L)^n\log\a_n}v_\La^{-\f{\de_n}n}(\mathbf{0}).
\endaligned
\end{equation}
Therefore, from \eqref{vkvkLa*} we get
\begin{equation}\aligned
\int_{M_1}v^{-\f{\de_n}n}\le C_ne^{C_nL^n}v^{-\f{\de_n}n}(0)
\endaligned
\end{equation}
for some constant $C_n>0$ depending only on $n$. Then
\begin{equation}\aligned
\omega_n=\int_{M_1}v^{-1}\le\int_{M_1}v^{-\f{\de_n}n}\le C_ne^{C_nL^n}v^{-\f{\de_n}n}(0),
\endaligned
\end{equation}
which implies
\begin{equation}\aligned\label{v0CnLn}
v(0)\le\left(\f{C_n}{\omega_n}\right)^{\f n{\de_n}}e^{\f{nC_n}{\de_n}L^n}.
\endaligned
\end{equation}
This completes the proof.
\end{proof}
\begin{remark}
The order $n$ of $L^n$ in \eqref{v0CnLn} comes from the volume growth of $M$ (cite Proposition \ref{vol}).
In other words, if $\mathcal{H}^n(M_1)\le c(1+L)^\a$ for some $\a\in(0,n]$ with $M_1=\{(x,Du)\in\R^n\times\R^n|\ x\in B_1\}$ and $L=\max_{B_1}|Du|$, then we can improve the estimates in \eqref{sharpQ} to
$\sharp \mathbf{Q}\le c'(1+L)^\a$ for some constant $c'>0$ depending only on $n,c$. Correspondingly, \eqref{v0CnLn} can be improved to $v(0)\le c''e^{c''L^\a}$ for some constant $c''>0$ depending only on $n,c,\a$.
\end{remark}

Let $u$ be a smooth solution to the special Lagrangian equation \eqref{SL} on $B_R\subset\R^n$ with the eigenvalues $\la_1\ge\cdots\ge\la_n$ of $D^2u$. Assume $\Th>(n-2)\pi/2$.
Then
\begin{equation}\aligned
\Th=\sum_i\arctan\la_i<(n-1)\f{\pi}2+\arctan\la_n,
\endaligned
\end{equation}
which implies
\begin{equation}\aligned
\arctan(-\la_n)<(n-1)\f{\pi}2-\Th.
\endaligned
\end{equation}
Monotonicity of the function '$\arctan$' on $(-\f{\pi}2,\f{\pi}2)$ infers
\begin{equation}\aligned
-\la_n<\tan\left(\f{\pi}2-\left(\Th-(n-2)\f{\pi}2\right)\right)=\cot\left(\Th-(n-2)\f{\pi}2\right).
\endaligned
\end{equation}
Namely,
\begin{equation}\aligned
D^2u>-\cot\left(\Th-(n-2)\f{\pi}2\right).
\endaligned
\end{equation}
Let $\overline{\la}_u$ denote the largest eigenvalue of $D^2u$, and $\Phi=\sqrt{1+\overline{\la}_u^2}$ on $B_R$. Let $M$ be the special Lagrangian graph of $Du$ with Laplacian $\De_M$. In Proposition 2.1 of \cite{WdY}, Wang-Yuan proved
\begin{equation}\aligned\label{DeMb}
\De_M\log\Phi\ge\left(1-\f{4}{\sqrt{4n+1}+1}\right)|\na \log\Phi|^2
\endaligned
\end{equation}
in the distribution sense. Hence, $\De_M\Phi^{-\left(1-\f{4}{\sqrt{4n+1}+1}\right)}\le0$ in the distribution sense from \eqref{DeMb}.
From Theorem \ref{MVIsuper}, there are constants $\de_*\in(0,1]$ depending on $n$ and $\th_{n,\Th}>0$ depending only on $n,\max\{0,\cot\left(\Th-(n-2)\f{\pi}2\right)\}$ such that
\begin{equation}\aligned
\f1{\mathcal{H}^n\left(M\cap \mathbf{B}_r(z)\right)}\int_{M\cap \mathbf{B}_r(z)}\Phi^{-\de_*}\le \th_{n,\Th} \Phi^{-\de_*}(z)
\endaligned
\end{equation}
for any $z\in M$ and $0<r<\th_{n,\Th}^{-1}d(z,\p M)$.
Analog to the proof of Theorem \ref{GraSL}, we have the following result.
\begin{theorem}\label{GEsptP}
Let $u$ be a smooth solution to the special Lagrangian equation \eqref{SL} on $B_R\subset\R^n$ with $|\Th|>(n-2)\pi/2$.
Then there is a constant $C_{n,\Th}>0$ depending only on $n,\Th$ with $C_{n,\Th}\rightarrow\infty$ as $|\Th|\rightarrow(n-2)\pi/2$ such that
\begin{equation}\aligned
|D^2u(0)|\le C_{n,\Th} \mathrm{exp}\left(C_{n,\Th}\f{\max_{B_R}|Du-Du(0)|^n}{R^n}\right).
\endaligned
\end{equation}
\end{theorem}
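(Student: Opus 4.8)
The plan is to follow the proof of Theorem~\ref{GraSL} almost verbatim, making two substitutions: the curvature estimate of Corollary~\ref{CurvEst} is replaced by the Lewy rotation that supercriticality makes available, and the superharmonic function $v^{-\de_n/n}$ is replaced by a negative power of $\Phi=\sqrt{1+\overline{\la}_u^2}$. By scaling we may assume $R=3$; subtracting a linear function we may assume $Du(0)=0$; and replacing $u$ by $-u$ (which sends $\la_i\mapsto-\la_i$ and $\Th\mapsto-\Th$) we may assume $\Th>(n-2)\f\pi2$. As recorded just before the statement, supercriticality forces $D^2u>-\cot(\Th-(n-2)\f\pi2)$ on $B_3$; put $\La=\max\{1,\cot(\Th-(n-2)\f\pi2)\}$, a quantity depending only on $n,\Th$ that tends to $\infty$ as $\Th\to(n-2)\f\pi2$, so that $D^2u>-\La$. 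Applying the Lewy rotation $F_\La$ of \eqref{labxby}, estimate \eqref{Dux2x1} and Lemma~\ref{ConvDf} show that $F_\La(M)$ is the graph of $D\bar u$ over a convex set $\bar x(M)$, and \eqref{BOUNDD2u} gives $-\f{2\La^2+1}\La\le D^2\bar u\le2\La$; in particular $\det(I+D^2\bar uD^2\bar u)\le\k^2$ for a constant $\k=\k_{n,\Th}$, so $F_\La(M)$ has bounded geometry.

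I would then run the lattice-and-chaining argument of Theorem~\ref{GraSL} on $F_\La(M)$. Since $F_\La$ is an ambient isometry preserving balls and volumes, the mean value inequality for $\Phi^{-\de_*}$ established just above on $M$ transfers to one for $\Phi_\La^{-\de_*}$ on $F_\La(M)$ (with $\Phi_\La=\Phi\circ F_\La^{-1}$ and the same constants $\de_*,\th_{n,\Th}$), valid on $\mathbf{B}_r(\mathbf z)$ whenever $r<\th_{n,\Th}^{-1}d(\mathbf z,F_\La(\p M))$. Fix a radius $r_0=r_0(n,\Th)>0$ and a lattice spacing $t_*=t_*(n,\Th)>0$ small enough that adjacent lattice points of $\bar x(\overline{M_1})$ are joined by segments of $F_\La(M)$-length $\le r_0$ (possible because $D^2\bar u$ is two-sided bounded and $\bar x(M_1)$ is convex, so the segments stay in the domain). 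With $L=\max_{B_1}|Du|$ one has $\bar x(M_1)\subset B_{1+L/(2\La)}$, hence both the number of lattice points and the maximal number of steps from the origin to a lattice point are $\le c_{n,\Th}(1+L)$. Iterating the mean value inequality along such chains, together with the volume comparison $\mathcal{H}^n(F_\La(M)\cap\mathbf{B}_{r'}(\mathbf x'))\le\a_{n,\Th}\,\mathcal{H}^n(F_\La(M)\cap\mathbf{B}_{r_0}(\mathbf x))$ for balls inside $\mathbf{B}_3$ (which holds with $\a_{n,\Th}$ depending only on $n,\Th$, by the $\k$-bound and monotonicity), produces points $\mathbf z_q$ near each lattice point with $\Phi_\La^{-\de_*}(\mathbf z_q)\le\a_{n,\Th}^{\,c_{n,\Th}(1+L)}\Phi_\La^{-\de_*}(0)$; summing over the lattice, which covers $F_\La(M_1)$, gives $\int_{M_1}\Phi^{-\de_*}=\int_{F_\La(M_1)}\Phi_\La^{-\de_*}\le C_{n,\Th}e^{C_{n,\Th}L}\Phi^{-\de_*}(0)$.

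To close the estimate I would bound the left side below by a constant depending only on $n,\Th$. Since $D^2u>-\La$, every eigenvalue of $D^2u$ lies in $[-\La,\overline{\la}_u]$, so $1+\la_i^2\le(1+\La^2)\Phi^2$ and $v\triangleq\sqrt{\det(I+D^2uD^2u)}\le(1+\La^2)^{n/2}\Phi^n$; as $\Phi\ge1$ and $\de_*\le1\le n$ this yields $\Phi^{-\de_*}\ge\Phi^{-n}\ge(1+\La^2)^{-n/2}v^{-1}$, so $\int_{M_1}\Phi^{-\de_*}\ge(1+\La^2)^{-n/2}\int_{M_1}v^{-1}=(1+\La^2)^{-n/2}\omega_n$. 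Combining with the previous display gives $\Phi(0)\le C_{n,\Th}e^{C_{n,\Th}L}$, whence $|D^2u(0)|\le\sqrt n\,\max\{\La,\overline{\la}_u(0)\}\le C_{n,\Th}e^{C_{n,\Th}L}$; finally $L=\max_{B_1}|Du|\le\max_{B_3}|Du-Du(0)|$, and undoing the scaling yields the asserted bound on $B_R$, with $C_{n,\Th}\to\infty$ as $|\Th|\to(n-2)\f\pi2$ inherited from $\La,\k,\th_{n,\Th},\a_{n,\Th}\to\infty$.

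The genuinely new point, and the one deserving care, is that in Theorem~\ref{GraSL} the chaining needed the honest curvature bound $|B_M|\le1$ because $v^{-1/n}$ was superharmonic only up to a $|B_M|^2$ term, whereas here Wang--Yuan's inequality \eqref{DeMb} makes $\Phi^{-(1-4/(\sqrt{4n+1}+1))}$ truly superharmonic with no lower-order term, so no curvature estimate is used; instead one needs only the Lewy-rotated graph to have bounded geometry, which is exactly where the hypothesis $|\Th|>(n-2)\f\pi2$ enters and why all the constants degenerate at the critical phase. The main obstacle is therefore not analytic but organizational: verifying that $r_0,t_*,\k,\a_{n,\Th},c_{n,\Th}$ depend only on $n,\Th$ and blow up as $|\Th|\to(n-2)\f\pi2$, and that the balls used throughout the chaining stay uniformly away from $F_\La(\p M)$ (which holds since $\bar x(\overline{M_1})$ lies at distance $\ge 2/\sqrt5$ from $\partial(\bar x(M_3))$ by the lower bound in \eqref{Dux2x1}).
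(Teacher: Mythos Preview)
Your proposal is correct and follows essentially the same route as the paper. The paper's own proof consists of the discussion immediately preceding the theorem statement (deriving $D^2u>-\cot(\Th-(n-2)\f\pi2)$ and invoking Wang--Yuan's inequality \eqref{DeMb} to make a negative power of $\Phi$ genuinely superharmonic) together with the one-line remark ``Analog to the proof of Theorem~\ref{GraSL}''; you have unpacked exactly this analogy, including the key observation that Corollary~\ref{CurvEst} is no longer needed since \eqref{DeMb} carries no $|B_M|^2$ term. Your lower bound $\int_{M_1}\Phi^{-\de_*}\ge(1+\La^2)^{-n/2}\omega_n$ via $v\le(1+\La^2)^{n/2}\Phi^n$ is a clean way to close the argument, filling in a detail the paper leaves to the reader.
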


\bibliographystyle{amsplain}

\end{document}